\newtheorem{lemma}{Lemma}[section]
\newtheorem{theorem}[lemma]{Theorem}
\newtheorem{proposition}[lemma]{Proposition}
\newtheorem{prop}[lemma]{Proposition}
\newtheorem{cor}[lemma]{Corollary}
\newtheorem{claim*}{Claim}
\newtheorem{thm}[lemma]{Theorem}
\newtheorem{defn}[lemma]{Definition}
\theoremstyle{definition}
\newtheorem{remark}[lemma]{Remark}
\newcommand{\PP}{{\mathbb P}}
\newcommand{\C}{{\mathbb C}}
\newcommand{\F}{{\mathbb F}}
\newcommand{\Q}{{\mathbb Q}}
\newcommand{\Z}{{\mathbb Z}}
\newcommand{\Xbar}{{\overline{X}}}
\newcommand{\kbar}{{\overline{k}}}
\newcommand{\Ybar}{{\overline{Y}}}
\newcommand{\Abar}{{\overline{A}}}
\newcommand{\Ebar}{{\overline{E}}}
\newcommand{\Epbar}{{\overline{E'}}}
\newcommand{\EEpbar}{{\overline{E\times E'}}}
\newcommand{\LamBBp}{\Lambda_{B,B'}}
\newcommand{\dB}{\widehat{B}}
\newcommand{\calO}{{\mathcal O}}
\newcommand{\OO}{{\mathcal O}}
\newcommand{\frakp}{{\mathfrak p}}
\DeclareMathOperator{\HH}{H}
\DeclareMathOperator{\Tr}{Tr}
\DeclareMathOperator{\coker}{coker}
\DeclareMathOperator{\rk}{rk}
\DeclareMathOperator{\im}{im}
\DeclareMathOperator{\End}{End}
\DeclareMathOperator{\Hom}{Hom}
\DeclareMathOperator{\Gal}{Gal}
\DeclareMathOperator{\Br}{Br}
\DeclareMathOperator{\ord}{ord}
\DeclareMathOperator{\Sym}{Sym}
\DeclareMathOperator{\Pic}{Pic}
\DeclareMathOperator{\Jac}{Jac}
\DeclareMathOperator{\Spec}{Spec}
\DeclareMathOperator{\Tor}{Tor}
\DeclareMathOperator{\et}{et}
\DeclareMathOperator{\Disc}{Disc}
\DeclareMathOperator{\GL}{GL}
\DeclareMathOperator{\id}{id}
\DeclareMathOperator{\vol}{vol}
\DeclareMathOperator{\NS}{NS}
\DeclareMathOperator{\Kum}{Kum}
\DeclareMathOperator{\Nm}{Nm}
\renewcommand{\et}{\textnormal{\'et}}
\newcommand{\largewedge}{\mbox{\Large $\wedge$}}
\newcommand{\ol}{\overline}
\numberwithin{equation}{section}
\numberwithin{table}{section}
\newcommand{\defi}[1]{\textsf{#1}} 
\newcommand{\inter}[2]{(#1.#2)} 
\title{Effective bounds for Brauer groups of Kummer surfaces over number fields}
\author{Victoria Cantoral Farf\'{a}n}
\author{Yunqing Tang}
\author{Sho Tanimoto}
\author{Erik Visse}
\address{Institut de Math\'ematiques de Jussieu - Paris Rive Gauche (IMJ-PRG)
UP7D - B\^atiment Sophie Germain - 75205 Paris France}
\email{victoria.cantoral-farfan@imj-prg.fr}
\urladdr{http://webusers.imj-prg.fr/~victoria.cantoral-farfan/}
\address{Mathematics Department, Harvard University, 1 Oxford Street, Cambridge, MA 02138, USA}
\email{yqtang@math.harvard.edu}
\urladdr{http://www.math.harvard.edu/~yqtang/}
\address{Department of Mathematical Sciences, University of Copenhagen, Universitetspark 5 2100 Copenhagen $\emptyset$ Denmark}
\email{sho@math.ku.dk}
\urladdr{http://shotanimoto.wordpress.com}
\address{Mathematisch Instituut, Leiden University, Niels Bohrweg 1, 2333CA, Leiden, the Netherlands}
\email{h.d.visse@math.leidenuniv.nl}
\urladdr{http://pub.math.leidenuniv.nl/$\sim$vissehd/}
\date{}
\subjclass[2010]{11G15; 11G20}
\begin{document}

	\begin{abstract}
		We study effective bounds for Brauer groups of Kummer surfaces associated to Jacobians of genus $2$ curves defined over number fields.
	\end{abstract}

	\maketitle
	

\section{Introduction}\label{Intro}

In 1971, Manin observed that failures of Hasse principle and weak approximation can be explained by Brauer-Manin obstructions for many examples \cite{Man71}.
Let $X$ be a smooth projective variety defined over a number field $k$.
\defi{The Brauer group of $X$} is defined as $$\Br(X) := \HH^2_{\et}(X, \mathbb G_m).$$
Then one can define an intermediate set using class field theory
\begin{displaymath}
X(k) \subset X(\mathbb A_k)^{\Br(X)} \subset X(\mathbb A_k),
\end{displaymath}
where $\mathbb A_k$ is the ad\`elic ring associated to $k$.
It is possible that $X(\mathbb A_k) \neq \emptyset$, but $X(\mathbb A_k)^{\Br(X)} = \emptyset$, whereby the Hasse principle fails for $X$.
When this happens, we say that there is \defi{a Brauer-Manin obstruction to the Hasse principle}.
When $X(\mathbb A_k)^{\Br(X)} \neq X(\mathbb A_k)$, we say that there is \defi{a Brauer-Manin obstruction to weak approximation}.
There is a large body of work on Brauer-Manin obstructions to the Hasse principle and weak approximation
(see, e.g., \cite{Man74}, \cite{BS75}, \cite{CTCJ80}, \cite{CTSSD87}, \cite{CTKS87}, \cite{SD93}, \cite{SD99}, \cite{KT04}, \cite{Bri06}, \cite{BBFL07}, \cite{KT08}, \cite{Log08}, \cite{VA08}, \cite{LvL09}, \cite{EJ10}, \cite{HVAV11}, \cite{ISZ},  \cite{EJ12}, \cite{HVA13}, \cite{CS13}, \cite{MSTVA14}, \cite{SZ14}, \cite{IS15}, \cite{OW16})
and it is an open question if for K3 surfaces, Brauer-Manin obstructions suffice to explain failures of Hasse principle and weak approximation, i.e., $X(k)$ is dense in $X(\mathbb A_k)^{\Br(X)}$
(see \cite{HS15} for some evidence supporting this conjecture.)

The main question discussed in this paper is of computational nature: how can one compute $\Br(X)$ explicitly? It is shown by Skorobogatov and Zarhin in \cite{SZ08} that $\Br(X)/\Br(k)$ is finite for any K3 surface $X$ defined over a number field $k$, but they did not provide any effective bound for this group.
Such an effective algorithm is obtained for degree $2$ K3 surfaces in \cite{HKT13} using explicit constructions of moduli spaces of degree $2$ K3 surfaces and principally polarized abelian varieties.
In this paper, we provide an effective algorithm to compute a bound for $\Br(X)/\Br(k)$ when $X$ is the Kummer surface associated to the Jacobian of a curve of genus $2$:

\begin{thm}
\label{thm:main}
There is an effective algorithm that takes as input an equation of a smooth projective curve $C$ of genus $2$ defined over a number field $k$, and outputs an effective bound for $\Br(X)/\Br_0(X)$ where $X$ is the Kummer surface associated to the Jacobian $\Jac(C)$ of the curve $C$.
\end{thm}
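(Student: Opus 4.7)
The plan is to control $\Br(X)/\Br_0(X)$ using the standard filtration $\Br_0(X) \subset \Br_1(X) \subset \Br(X)$, where $\Br_1(X) := \ker(\Br(X) \to \Br(\bar X))$ is the algebraic Brauer group. Since we have an exact sequence with quotients $\Br_1(X)/\Br_0(X)$ and $\Br(X)/\Br_1(X)$, it suffices to bound each effectively. The Hochschild--Serre spectral sequence provides an injection $\Br_1(X)/\Br_0(X) \hookrightarrow H^1(G_k, \Pic(\bar X))$, and there is a natural injection $\Br(X)/\Br_1(X) \hookrightarrow \Br(\bar X)^{G_k}$; bounding the targets bounds the source.

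For the algebraic quotient, I would first describe $\Pic(\bar X)$ as a $G_k$-module. For $X = \Kum(A)$ with $A = \Jac(C)$, the group $\Pic(\bar X)$ is an extension of $\NS(\bar A)$ by the permutation module on the sixteen exceptional divisors arising from blowing up the $2$-torsion. Hence the Galois action is determined by the action on $\NS(\bar A)$, which factors through the action on $\End(\bar A)$, together with the action on $A[2]$. Both inputs are accessible from $C$: the $2$-torsion of $\Jac(C)$ is described by Mumford's classical theory in terms of the Weierstrass points of $C$, and the field of definition of $\End(\bar A)$ is bounded effectively using Faltings plus Masser--W\"ustholz-type isogeny estimates. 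Once the $G_k$-module structure is known on an effective finite quotient, $H^1(G_k, \Pic(\bar X))$ is computed by routine finite group cohomology.

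For the transcendental quotient, the key input is the isomorphism $\Br(\bar X) \cong \Br(\bar A)$ of Galois modules obtained by pullback along $A \setminus A[2] \to X$ (due to Skorobogatov--Zarhin for Kummer surfaces). It suffices therefore to bound $\Br(\bar A)^{G_k}$. I would work prime-by-prime via the exact sequence
\[ 0 \to \NS(\bar A) \otimes \Q_\ell/\Z_\ell \to H^2_{\et}(\bar A, \Q_\ell/\Z_\ell(1)) \to \Br(\bar A)[\ell^\infty] \to 0, \]
together with the identification $H^2_{\et}(\bar A, \Z_\ell(1)) \cong \wedge^2 \Hom(T_\ell A, \Z_\ell(1))$, which reduces $\Br(\bar A)[\ell^\infty]^{G_k}$ to a computation of invariants under the image of $G_k$ in $\GL(T_\ell A)$. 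One then needs effective vanishing of these invariants for all primes $\ell$ outside an effectively determined finite set, plus a uniform bound on the contributions of the remaining primes.

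The main obstacle is making the inputs on the Tate module representations fully effective. What is required is an effective open-image statement for the compatible system $\{T_\ell \Jac(C)\}_\ell$: an explicit $\ell_0 = \ell_0(C)$ beyond which the image of $G_k$ is as large as permitted by $\End(\bar A)$ and the Weil pairing, together with effective control of the finite index below $\ell_0$. For abelian surfaces this requires branching on the endomorphism type (generic, real multiplication, quaternionic, or $\CM$) and detecting this type effectively from $C$, then invoking the Masser--W\"ustholz effective isogeny theorem and a careful case analysis on the Mumford--Tate group. Combining the resulting uniform bound on $\Br(\bar A)^{G_k}$ with the effective computation of $H^1(G_k, \Pic(\bar X))$ from the algebraic step yields the claimed effective bound for $\Br(X)/\Br_0(X)$.
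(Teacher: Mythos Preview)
Your overall architecture matches the paper's exactly: filter by $\Br_0 \subset \Br_1 \subset \Br$, handle the algebraic quotient via $H^1(k,\NS(\bar X))$ and the Galois module structure determined by $A[2]$ and $\End(\bar A)$, and handle the transcendental quotient by passing to $\Br(\bar A)^{G_k}$ via the Skorobogatov--Zarhin isomorphism and then analysing the $\ell$-primary parts through $\wedge^2 T_\ell A$.

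The substantive difference is in the tool you propose for the transcendental step. You ask for an \emph{effective open-image theorem}: an explicit $\ell_0(C)$ beyond which the image of $G_k$ in $\GL(T_\ell A)$ is as large as the endomorphism ring and the Weil pairing allow, plus effective index control for $\ell \le \ell_0$. That is a considerably stronger input than what the paper actually uses, and uniform effective surjectivity across all four endomorphism types of abelian surfaces is not straightforwardly available in the literature. The paper instead uses only the effective Faltings/Masser--W\"ustholz theorem, in the explicit form of Gaudron--R\'emond: there is an effectively computable integer $M$, depending on $h(A)$ and $[k:\Q]$, such that $M$ kills $\coker\bigl(\End_k(A)\to \End_{G_k}(A[\ell^n])\bigr)$ for \emph{all} $\ell$ and $n$. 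This is strictly weaker than open image but is exactly enough: one embeds the transcendental lattice $T_A$ into $\End(T_\ell A)$, so $(T_A/\ell^n)^{G_k}$ injects into $\End_{G_k}(A[\ell^n])/\End(A)$ and hence has order bounded independently of $n$. Feeding this uniform bound into the Skorobogatov--Zarhin five-term sequence (at level $\ell^n$, not $\Q_\ell/\Z_\ell$) controls both the cokernel of $f_n$ and, after a short d\'evissage, the kernel of $g_n$. So your route would work if the stronger input were on the shelf, but the paper's route is both more economical and rests on a theorem that is already fully effective.

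One minor correction on the algebraic side: $\NS(\bar X)$ is not an extension of $\NS(\bar A)$ by the permutation lattice on the sixteen nodes. Rather, $\pi_*\NS(\bar A)\oplus K$ (with $K$ the \emph{saturation} of the span of the nodal classes) sits in $\NS(\bar X)$ as a sublattice of index $2^{\rho(\bar A)}$, and the paper computes the overlattice explicitly case-by-case on $\rho(\bar A)$ using Nikulin's discriminant-form formalism. The Galois action is still governed by $A[2]$ and $\NS(\bar A)$ as you say, so this does not affect your high-level plan, but the actual computation of $H^1$ requires this finer lattice description.
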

We obtain the following corollary as a consequence of results in \cite{KT11} and \cite{PTV}:

\begin{cor}
Given a smooth projective curve $C$ of genus $2$ defined over a number field $k$, 
there is an effective description of the set
\begin{displaymath}
 X(\mathbb A_k)^{\Br(X)}
\end{displaymath}
where $X$ is the Kummer surface associated to the Jacobian $\Jac(C)$ of the curve $C$.
\end{cor}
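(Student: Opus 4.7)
The plan is to assemble the effective description of $X(\mathbb A_k)^{\Br(X)}$ from three ingredients. Theorem \ref{thm:main} produces a finite list of representatives $\alpha_1,\ldots,\alpha_n \in \Br(X)$ whose classes generate $\Br(X)/\Br_0(X)$; the cited results of \cite{KT11} supply effective local evaluation maps $\alpha_{i,v}\colon X(k_v)\to \Br(k_v)$; and the cited results of \cite{PTV} effectively identify, for each $i$, a finite set $S_i$ of places of $k$ outside of which $\alpha_i$ evaluates to $0$ on $X(k_v)$ for every local point.

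First I would make Theorem \ref{thm:main} yield not just a cardinality bound but an explicit set of generators; for the algebraic part this is standard Galois cohomology of $\Pic(\overline X)$, while for the transcendental part one reads off Azumaya representatives from the proof. By the definition of the Brauer-Manin pairing,
\[
X(\mathbb A_k)^{\Br(X)} \;=\; \bigcap_{i=1}^{n}\Bigl\{(P_v)_v \in X(\mathbb A_k)\; :\; \sum_{v\in S_i}\inv_v\bigl(\alpha_i(P_v)\bigr) = 0\Bigr\},
\]
since by the third ingredient all terms outside $S_i$ vanish. Each of the $n$ defining conditions is then a finite sum of locally constant $\mathbb Q/\mathbb Z$-valued functions on $\prod_{v\in S_i}X(k_v)$, which together with the standard integral-model description of $X(\mathbb A_k)$ gives an explicit finite description of the intersection.

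The main obstacle does not lie in this corollary, whose proof is essentially an assembly, but in verifying that the Brauer classes produced by the algorithm of Theorem \ref{thm:main} are presented in a form compatible with the evaluation machinery of \cite{KT11, PTV}. The delicate point is the transcendental piece of $\Br(X)/\Br_1(X)$: whereas algebraic classes admit cocycle descriptions that make local evaluation routine, transcendental classes require explicit Azumaya algebra representatives, and one must check that such representatives can be extracted from (or supplemented to) the algorithm proving Theorem \ref{thm:main}. Once this compatibility is confirmed, the corollary follows immediately.
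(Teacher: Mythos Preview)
The paper does not spell out a proof of this corollary; it simply records that the statement follows from Theorem~\ref{thm:main} together with \cite{KT11} and \cite{PTV}. So the comparison is between your sketch and the intended black-box invocation of those references.

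Your sketch misidentifies what Theorem~\ref{thm:main} actually delivers and therefore misplaces the main difficulty. Theorem~\ref{thm:main} outputs only an effective \emph{bound} on $|\Br(X)/\Br_0(X)|$; it does not produce a list of Azumaya representatives $\alpha_1,\ldots,\alpha_n$. You acknowledge this and propose to upgrade the bound to an explicit generating set, flagging the transcendental classes as the ``main obstacle.'' But this upgrade is exactly the content of \cite{KT11}: Kresch and Tschinkel show that for a smooth projective geometrically irreducible surface over a number field with torsion-free geometric Picard group, an effective bound on the exponent (or order) of $\Br(X)/\Br_0(X)$ is already enough to effectively enumerate representatives of all classes and to compute the Brauer--Manin set. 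So the step you single out as delicate is not something to be extracted from the proof of Theorem~\ref{thm:main}; it is handled wholesale by the cited reference, taking the bound as input.

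You also misassign the role of \cite{PTV}. That paper gives an algorithm to compute the N\'eron--Severi group (hence $\Pic(\overline{X})$ as a Galois module), which feeds into the algebraic Brauer group and into the hypotheses needed to run \cite{KT11}; it is not a tool for determining the finite set of bad places for a given Brauer class. The latter is again part of the package in \cite{KT11}. With these reattributions your outline becomes correct, but the argument then collapses to the one-line deduction the paper intends: feed the bound from Theorem~\ref{thm:main} and the computation of $\NS(\overline{X})$ into the machinery of \cite{KT11}.
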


Note that given a curve $C$ of genus $2$, the surface $Y=\Jac(C)/\{\pm 1\}$ can be realized as a quartic surface in $\mathbb P^3$ (see \cite{FS97} Section 2) and the Kummer surface $X$ associated to $\Jac(C)$ is the minimal resolution of $Y$, so one can find defining equations for $X$ explicitly.

The quartic surface $Y$ has sixteen nodes, and by considering the projection from one of these nodes, we may realize $Y$ as a double cover of the plane. Thus $X$ can be realized as a degree $2$ K3 surface and our Theorem~\ref{thm:main} follows from \cite{HKT13}. However we avoid the use of the Kuga--Satake construction which makes our algorithm more practical than the method in \cite{HKT13}. In particular, our algorithm provides a large, but explicit bound for the Brauer group of $X$. (See the example we discuss below.)

The method in this paper combines many results from the literature.
 The first key observation is that the Brauer group $\Br(X)$ admits the following stratification:

\begin{defn}
Let $\ol X$ denote $X\times_k \Spec{\ol k}$ where $\ol k$ is a given separable closure of $k$. Then we write $\Br_0(X)=\im\left(\Br(k)\to \Br(X)\right)$ and $\Br_1(X)=\ker\left(\Br(X)\to \Br(\ol X)\right)$.

Elements in $\Br_1(X)$ are called \defi{algebraic elements}; those in the complement  $\Br(X) \setminus \Br_1(X)$ are called \defi{transcendental elements}.
\end{defn}

Thus to obtain an effective bound for $\Br(X)/\Br_0(X)$, it suffices to study $\Br_1(X)/\Br_0(X)$ and $\Br(X)/\Br_1(X)$. The group $\Br_1(X)/\Br_0(X)$ is well-studied, and it admits the following isomorphism:
\begin{displaymath}
\Br_1(X)/\Br_0(X) \cong \HH^1(k, \Pic(\ol X)).
\end{displaymath}
Note that for a K3 surface $X$, we have an isomorphism $\Pic(X) = \NS(X)$.
Thus as soon as we compute $\NS(\ol X)$ as a Galois module, we are able to compute $\Br_1(X)/\Br_0(X)$. An algorithm to compute $\NS(\ol X)$ is obtained in \cite{PTV}, but we consider another algorithm which is based on \cite{FC}.

To study $\Br(X)/\Br_1(X)$, we use effective versions of Faltings' theorem and combine them with techniques in \cite{SZ08} and \cite{HKT13}. Namely, we have an injection
\begin{displaymath}
\Br(X)/\Br_1(X) \hookrightarrow \Br(\ol X)^\Gamma
\end{displaymath}
where $\Gamma$ is the absolute Galois group of $k$.
As a consequence of \cite{SZ12}, we have an isomorphism of Galois modules
\begin{displaymath}
\Br(\ol X) = \Br(\ol A),
\end{displaymath}
where $A=\Jac(C)$ is the Jacobian of $C$.
Thus it suffice to bound the size of $\Br(\ol A)^\Gamma$.
To bound the cardinal of this group, we consider the following exact sequence as \cite{SZ08}:
\begin{align*}
\begin{split}
0 &\to \left(\NS(\ol{A})/\ell^n\right)^\Gamma \stackrel{f_n}{\to} \HH^2_\et(\ol{A},\mu_{\ell^n})^\Gamma \to \Br(\ol{A})_{\ell^n}^\Gamma\to \\
&\to \HH^1(\Gamma, \NS(\ol{A})/\ell^n) \stackrel{g_n}{\to} \HH^1(\Gamma,\HH^2_\et(\ol{A},\mu_{\ell^n})),
\end{split}
\end{align*}
where $\ell$ is any prime and $\Br(\ol{A})_{\ell^n}$ is the $\ell^n$-torsion part of the Brauer group of $\ol A$. Using effective versions of Faltings' theorem, we bound the cokernel of $f_n$ and the kernel of $g_n$ independently of $n$.

We emphasize that our algorithm is practical for any genus $2$ curve whose Jacobian has N\'{e}ron--Severi rank $1$, i.e., we can actually implement and compute a bound for such a curve. For example, consider the following hyperelliptic curve of genus 2 defined over $\mathbb Q$:
\begin{displaymath}
C: y^2 = x^6+x^3+x+1.
\end{displaymath}
Let $A = \Jac(C)$ and let $X = \Kum(A)$ be the Kummer surface associated to $A$.
The geometric N\'{e}ron--Severi rank of $A$ is $1$.
Our algorithm shows that
\begin{displaymath}
|\Br(X)/\Br(\mathbb Q)| <  4\cdot 10^{7.5\cdot 10^{16106}}.
\end{displaymath}

Our effective bound explicitly depends on the Faltings height of the Jacobian of $C$, so it does not provide any uniform bound as conjectured in \cite{TVA15}, \cite{AVA16}, and \cite{VA16}.
However, it is an open question whether the Faltings height in Theorem~\ref{effFal} is needed.
If there is a uniform bound for Theorem~\ref{effFal} which does not depend on the Faltings height,
then our proof provides a uniform bound for the Brauer group.
Such a uniform bound is obtained for elliptic curves in \cite{VAV16}.

Some theory behind the computation is given in Section \ref{Alg_part1} and actual computations using \textsc{Magma} are described in Section \ref{computations}.

The paper is organized as follows. In Section~\ref{Effective_Faltings} we review effective versions of Faltings' theorem and consequences that will be useful for our purposes. In Section~\ref{Alg_part1} we review methods from the literature in order to compute the N\'{e}ron--Severi lattice as a Galois module. Section~\ref{Trans_part} proves our bounds for the size of the transcendental part. Section~\ref{computations} is devoted to \textsc{Magma} computations in the lowest rank case and Section~\ref{section:example} explores an example.

\noindent
{\bf Acknowledgments}

The authors would like to thank Martin Bright, Edgar Costa, Brendan Hassett, Hendrik Lenstra, Ronald van Luijk, Chloe Martindale, Rachel Newton, Fabien Pazuki, Dan Petersen, Padmavathi Srinivasan, and Yuri Tschinkel for useful discussions and comments. In particular we would like to thank Rachel Newton for her comments on the early draft of this paper. They also would like to thank Andreas-Stephan Elsenhans for providing us with the \textsc{Magma} code of the algorithm in \cite{EJ}. 

This paper began as a project in Arizona Winter School 2015 ``Arithmetic and Higher-dimensional varieties''.
The authors would like to thank AWS for their hospitality and travel support. Finally the authors are grateful to Tony V\'{a}rilly-Alvarado for suggesting this project, many conversations where he patiently answered our questions, and for his constant encouragement.
This project and AWS have been supported by NSF grant DMS-1161523.
Tanimoto is supported by Lars Hesselholt's Niels Bohr professorship.

\section{Effective version of Faltings' theorem}\label{Effective_Faltings}

One important input of our main theorem is an effective version of Faltings' isogeny theorem. Such a theorem was first proved by Masser and W\"ustholz in \cite{MW95} and the computation of the constants involved was made explicit by Bost \cite{B96} and Pazuki \cite{P12}. The work of Gaudron and Remond \cite{GR14} gives a sharper bound. Although the general results are valid for any abelian variety over a number field, we will only focus on elliptic curves and abelian surfaces. 

The bounds in this section depend on the stable Faltings height of the given abelian surface. If a hyperelliptic curve $C$ is given by $y^2+G(x)y=F(x)$, where $G(x), F(x)$ are polynomials in $x$ of degrees at most $3$ and $6$ respectively, then an upper bound for the height of $\Jac(C)$ can be computed using \cite{P14}*{Thm.~2.4}.
More precisely, the functions \textit{AnalyticJacobian} and \textit{Theta} in \textsc{Magma} compute the period matrix of $\Jac(C)$ and the theta functions used to define $J_{10}$ in Pazuki's formula. 

Let $k'$ be a finite extension of $k$ such that after base change to $k'$, the variety $\Jac(C)_{k'}$ has semistable reduction everywhere. For example, $k'$ can be taken to be the field of definition of all $12$-torsion points. 

To bound the non-archimedean contribution to Pazuki's formula \cite{P14}*{Thm.~2.4}, we notice that at each finite place $v$, the local contribution is bounded by the minimum of 
\begin{displaymath}
\frac{1}{10}\ord_v(2^{-12}\Disc_{6}(4F_v+G_v^2))\log N_{k'/\Q}(v),
\end{displaymath}
since $e_v$ defined by Pazuki is non-negative (see \cite{P14}*{Def.~8.2, Prop.~8.6}). Here $F_v(x)$ and $G_v(x)$ are polynomials of degrees at most $6$ and $3$ in $\OO_{k'_v}[x]$ such that $C_{k'_v}$ is defined by $y^2+G_v(x)y=F_v(x)$ and the minimum is taken over all such polynomials $F_v$ and $G_v$. 
Hence if $F(x), G(x)\in \OO_k[x]\subset \OO_{k'}[x]$, then we bound the sum of the contributions of all non-archimedean places by $\frac{1}{10}\log(2^{-12}\Disc_{6}(4F+G^2))$.

We also remark that following \cite{K99}*{Sec.~4,5} one can easily compute the exact local contribution at $v\nmid 2$ by studying the roots of $F(x)$ assuming $G=0$. 

Let $A$ be an abelian surface defined over a number field $k$. Let $\Gamma$ be its absolute Galois group. We denote the stable Faltings height of $A$ by $h(A)$ (with the normalization as in the original work of Faltings \cite{F}). For a positive integer $m$, let $A_m$ be the $\Z[\Gamma]$-module of $m$-torsion points of $A(\bar{k})$. Without further indication, $A$ will be the Jacobian of some hyperelliptic curve $C$, principally polarized by the theta divisor, and we use $L$ to denote the line bundle on $A$ corresponding to the theta divisor.

Throughout this section, when we say there is an isogeny between abelian varieties $A$ and $B$ of degree at most $D$, it means that there exist isogenies $A\rightarrow B$ and $B\rightarrow A$ both whose degrees are at most $D$.

\subsection{geometrically simple case}
\label{subsec: simple}
We first deal with the case when $A$ is geometrically simple. Equivalently, $A$ is not isogenous to a product of two elliptic curves over $\bar{k}$.

The following theorem is a combination of results in \cite{MW95} and \cite{GR14}. 

\begin{theorem}\label{effFal}
For any integer $m$, there exists a positive integer $M_m$ such that the cokernel of the map $\End_k(A)\rightarrow \End_\Gamma(A_m)$ is killed by $M_m$. Furthermore, there exists an upper bound for $M_m$ depending on $h(A)$ and $[k:\Q]$ which is independent of $m$. Explicitly, when $\bar{r}=1$,
$$M_m\leq 2^{4664}c_1^{16}c_2(k)^{256}\left(2h(A)+\tfrac{8}{17}\log[k:\Q]+8\log c_1+128 \log c_2(k) +1503\right)^{512},$$
and when $\bar{r}=2$ or $4$,
\begin{align*}
M_m\leq & (r/4)^{r/2} 2^{48}\cdot c_1^{16}c_2(k)^{256} c_8(A,k)^{17r}\\
 & \cdot \left(16\log c_1+\frac{256}{\bar{r}}\log c_2(k)+16r\log c_8(A,k)+4h(A)+\tfrac{16}{17}\log [k:\Q]+1400\right)^{512/\ol{r}}.
\end{align*}

Where $r$ (resp. $\bar{r}$) is the $\Z$-rank of $\End_k(A)$ (resp. $\End_{\bar{k}}(A)$). 

The constants $c_1$ and $c_2$ are $c_1=4^{11}\cdot 9^{12}$ and
$c_2(k)=7.5\cdot 10^{47}[k:\Q]$, and $c_8(A,k)$ is
$$4^5\cdot 9^8\left(5.04\cdot10^{24}[k:\Q]m_A\left(\tfrac{5}{4}m_A+\log [k:\Q]+\log m_A+60\right)\right)^{8/\bar{r}},$$
where $m_ A$ is $\max(1,h(A))$.
\end{theorem}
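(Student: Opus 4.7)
The plan is to follow the Masser--W\"ustholz strategy of \cite{MW95}, substituting the sharper quantitative isogeny bounds of Gaudron--R\'emond \cite{GR14} to obtain the explicit constants. The basic idea is that every $\Gamma$-equivariant endomorphism of $A_m$ gives rise to a $k$-rational abelian variety isogenous to $A \times A$, so an effective isogeny theorem yields an effective bound for the cokernel of $\End_k(A) \to \End_\Gamma(A_m)$.

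First, given $\phi \in \End_\Gamma(A_m)$, I form the graph subgroup
\[
G_\phi := \{(x, \phi(x)) : x \in A_m\} \subset (A\times A)_m.
\]
Since $\phi$ commutes with the $\Gamma$-action, $G_\phi$ is $\Gamma$-stable, so the quotient $B_\phi := (A \times A)/G_\phi$ is defined over $k$ and is $k$-isogenous to $A \times A$ via the quotient map. Next, I apply the effective isogeny theorem of Gaudron--R\'emond to produce a $k$-rational isogeny $\lambda : B_\phi \to A \times A$ of degree bounded in terms of $h(A)$, $[k:\Q]$, and the geometric endomorphism rank $\bar{r}$; composing with the quotient map gives a $k$-rational endomorphism $\psi$ of $A \times A$. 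Writing $\psi$ as a $2 \times 2$ matrix with entries in $\End_k(A)$ and reducing modulo $m$, the off-diagonal entries recover $N\cdot \phi$ for some integer $N$ dividing the isogeny degree; this shows that $M_m$ may be taken to be this bounded integer.

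The explicit constants then follow by substituting the quantitative inputs from \cite{GR14}. When $\bar{r} = 1$ one has $\End_{\bar k}(A) = \Z$, and the extraction step simplifies so that only $c_1$ and $c_2(k)$ appear. When $\bar{r} = 2$ or $4$, the algebra $\End_{\bar k}(A)\otimes \Q$ is a real quadratic field or an indefinite quaternion algebra, so one must first isolate the relevant isotypic decomposition of $A \times A$ (or invoke a Zarhin-style trick) and apply Gaudron--R\'emond to the resulting factors, introducing the additional constant $c_8(A,k)$ which quantifies how $A$ is related to its isogeny factors. The exponents $16$, $256$, $512/\bar{r}$ reflect the combination of $\dim A = 2$ with the dimension-dependent powers appearing in the Gaudron--R\'emond estimate, while the outer factor $(r/4)^{r/2}$ accounts for the size of the endomorphism lattice one needs to traverse.

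The principal obstacle is verifying that $M_m$ is genuinely uniform in $m$, since both $B_\phi$ and $\lambda$ depend on $m$ and $\phi$. The key point is that the Gaudron--R\'emond bound depends only on the Faltings height of $A\times A$ and on $[k:\Q]$, and $h(A\times A) = 2h(A)$ is manifestly independent of $m$. A secondary subtlety is the bookkeeping in the cases $\bar{r} = 2, 4$: there, extracting $\phi$ from the matrix of $\psi$ requires auxiliary isogenies between the isotypic factors of $A\times A$, and their degrees must be absorbed into the final constant -- this is exactly the source of the extra factors involving $c_8(A,k)$ and of the $1/\bar{r}$ in the exponent of the logarithmic term.
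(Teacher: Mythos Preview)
Your overall strategy matches the paper's: both invoke the Masser--W\"ustholz lemma \cite{MW95}*{Lem.~3.2}, which bounds $M_m$ by $i(A)\cdot b(A\times A)$, where $i(A)$ is the class index of the order $\End_k(A)$ and $b(A\times A)$ is the minimal isogeny bound for $k$-forms of $A\times A$, and then plug in Gaudron--R\'emond to control $b(A\times A)$. However, two points in your write-up are off.

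First, your uniformity argument is incomplete. You write that the Gaudron--R\'emond bound ``depends only on the Faltings height of $A\times A$'', but their estimate for an isogeny $B'\to A\times A$ depends on $h(B')$ as well (through $\Lambda_{B,B'}$). Since $h(B')\le 2h(A)+\tfrac12\log d$ where $d$ is the very degree you are trying to bound, you arrive at an implicit inequality $d\le F(\log d)$. The paper resolves this via a bootstrapping step (\cite{GR14}*{Lem.~8.5}); without it you do not obtain a closed-form bound independent of $m$.

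Second, your explanation of the constant $c_8(A,k)$ is wrong. This theorem is stated for $A$ \emph{geometrically simple} (Subsection~\ref{subsec: simple}), so there is no isotypic decomposition, no Zarhin trick, and no ``isogeny factors'' of $A$ to relate. The constant $c_8(A,k)$ is simply the bound on $\Lambda(\End_k(A))$, the maximal Rosati norm of a $\Z$-basis of $\End_k(A)$; it enters because $i(A)\le (r/4)^{r/2}\Lambda^r$ (Lemma~\ref{25}) and because $v(A)\le\Lambda^r$ appears in the isogeny-degree estimate (Proposition~\ref{24}). When $\bar r=1$ one has $\End_{\bar k}(A)=\Z$ and $\Lambda=2$ trivially, whence $c_8$ disappears; when $\bar r=2$ or $4$ the bound on $\Lambda$ requires applying the periods theorem to an auxiliary $H\subset A\times A^4$ (Lemma~\ref{27}), and that is what produces $c_8(A,k)$. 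The factor $(r/4)^{r/2}$ is the contribution of $i(A)$, not a count of lattice traversals.
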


\begin{remark}\label{remark:2.1}{\color{white}I need the bullets to be aligned. This is a cheat.}
\begin{itemize}
\item The ranks $r$ and $\bar{r}$ take values in $\{1,2,4\}$ and the inequality $r\leq \bar{r}$ holds.
\item The given explicit bounds in the theorem do indeed not depend on $m$. For ease of notation we will write $M_m=M$.
\end{itemize}
\end{remark}

We sketch a proof of this theorem following the relevant parts in those two papers. As we only focus on abelian surfaces, the bound in the theorem here is slightly sharper and we will emphasize the modifications. We will however need the result of the first lemma also in the case of elliptic curves, so we give the setup for abelian varieties in any dimension.

Let $A$ be a principally polarized abelian variety with polarization $L$ and let $B$ be the abelian variety $A\times A$ principally polarized by $pr_1^*L\otimes pr_2^*L$. Following \cite{MW95}, we denote by $b(B)$ the smallest integer such that for any abelian variety $B'$ defined over $k$, if $B'$ is isogenous to $B$ over $k$, then there exists an isogeny $\phi:B'\rightarrow B$ over $k$ of degree at most $b(B)$. Let $i(A)$ be the class index of the order $\End_k(A)$ defined in \cite{MW95}*{Sec.~2} and let $d(A)$ be the discriminant of $\End_k(A)$ as a $\Z$-module defined in \cite{MW95b}*{Sec.~2}. 

Still letting $B'$ vary over the abelian varieties over $k$ that are isogenous to $B$, let $\dB'$ be the dual abelian variety of $B'$ and let $Z(B')$ be the principally polarizable abelian variety $(B')^4\times (\dB')^4$. We fix a principal polarization on $Z(B')$. In \cite{GR14}*{Sec.~2}, the notion of Rosati involution is generalized to the ring of homomorphisms of abelian varieties and the Rosati involution is used to define a norm on $\End_k(A)$ (resp. $\Hom_k(B,Z(B'))$). Referring to the notation of \cite{GR14}*{Sec.~3}, use $\Lambda$ (resp. $\LamBBp$) to denote $\Lambda(\End_k(A))$ (resp. $\Lambda(\Hom_k(B,Z(B')))$), which is the minimal real number which bounds from above the norms of all elements in some $\Z$-basis of $\End_k(A)$ (resp. $\Hom_k(B,Z(B'))$).  We use $v(A)$ to denote $\vol(\End_k(A))$ with respect to the given norm. 

\begin{lemma}[\cite{MW95}*{Lem.~3.2}]\label{22}
With notation as above, such integers $M_m$ exist satisfying $M_m\leq i(A)b(B)$.
\end{lemma}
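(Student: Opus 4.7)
The plan is, starting from an arbitrary $\alpha \in \End_\Gamma(A_m)$, to produce an element of $\End_k(A)$ whose restriction to $A_m$ equals $M\alpha$ for some integer $M \leq i(A)\, b(B)$ that does not depend on $m$.

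First, I would form the graph $G_\alpha := \{(x, \alpha(x)) : x \in A_m\} \subseteq B_m$. Galois-equivariance of $\alpha$ ensures $G_\alpha$ is stable under the absolute Galois group, hence descends to a $k$-rational finite flat subgroup scheme of $B$. The quotient $B' := B/G_\alpha$ is therefore an abelian variety over $k$ that is $k$-isogenous to $B$, so by the very definition of $b(B)$ there exists a $k$-isogeny $\psi : B' \to B$ with $\deg \psi \leq b(B)$. Composing with the canonical quotient $\pi : B \to B'$ gives an endomorphism $\phi := \psi \circ \pi \in \End_k(B)$ whose kernel contains $G_\alpha$.

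Next, using the natural identification $\End_k(B) = \Mat_2(\End_k(A))$ induced by the product decomposition $B = A \times A$, I would write $\phi = \begin{pmatrix} a & b \\ c & d \end{pmatrix}$ and translate the containment $G_\alpha \subseteq \ker \phi$ into the identities $a + b\alpha = 0$ and $c + d\alpha = 0$ as endomorphisms of $A_m$. These say that on $A_m$ the compositions $b\alpha$ and $d\alpha$ already lie in the image of $\End_k(A) \to \End(A_m)$. To trade the factor of $b$ (or $d$) for an integer multiplier on $\alpha$ itself, I would invoke the class index: as the auxiliary data varies, the off-diagonal entries $b$ span a left ideal $I \subseteq \End_k(A)$, and the definition of $i(A)$ from \cite{MW95}*{Sec.~2} guarantees that $I$ contains a principal ideal whose index in $\End_k(A)$ is controlled by $i(A)$ times the index of $I$. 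Combining this with $\deg \psi \leq b(B)$ exhibits an element of $\End_k(A)$ whose restriction to $A_m$ equals $M\alpha$ with $M \leq i(A)\, b(B)$.

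I expect the main difficulty to lie in this last step: one must carefully identify the left ideal of admissible matrix entries, match it against the abstract definition of the class index from \cite{MW95}*{Sec.~2}, and verify that the resulting multiplier on $\alpha$ is precisely $i(A)\, b(B)$ rather than a larger combination of these quantities. Independence from $m$ will then follow automatically, since both $b(B)$ and $i(A)$ are defined without reference to $m$ (the dependence on $m$ in $\pi$ and in $G_\alpha$ cancels against the $m$-torsion factor in $\deg \phi$).
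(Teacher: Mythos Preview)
The paper does not supply its own proof of this lemma; it simply records the statement and cites \cite{MW95}*{Lem.~3.2}. Your sketch follows the Masser--W\"ustholz strategy faithfully: graph of $\alpha$, quotient $B' = B/G_\alpha$, the isogeny $\psi:B'\to B$ of degree at most $b(B)$, and the matrix decomposition of $\phi=\psi\circ\pi$.

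One refinement worth noting, since the paper itself uses it later (see the proof of Theorem~\ref{effFalEC}, which reproves a variant of this lemma): the crucial bridge between the analytic bound $b(B)$ and the algebraic structure is not obtained by varying auxiliary data and collecting off-diagonal entries into an ideal, but rather from the single sandwich inclusion
\[
(\deg\psi)\cdot\ker\phi \;\subseteq\; G_\alpha \;\subseteq\; \ker\phi,
\]
which is \cite{MW95}*{Lem.~3.1}. The left inclusion is what lets you pass from ``$q\alpha$ lifts'' to ``an integer multiple of $\alpha$ lifts'': it forces $(\deg\psi)\cdot(0,y)\in G_\alpha$ for every $y$ killed by the second column of $\phi$, and from there the class-index machinery of \cite{MW95}*{Sec.~2} converts the ideal generated by the relevant entries into a principal one at the cost of the factor $i(A)$. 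Your instinct that this last step is the delicate one is correct, but the input is the sandwich inclusion rather than a variation argument.
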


From now on, we revert back to the case where $A$ is an abelian surface and the fixed polarization comes from the theta divisor. 

\begin{lemma}\label{23} \label{25} We have $i(A)\leq d(A)^{1/2}=(r/4)^{r/2}v(A).$ and $v(A)\leq \Lambda^r$
\end{lemma}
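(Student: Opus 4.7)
The plan is to establish the three assertions in the lemma separately, all of which amount to unwinding the various definitions from the cited papers and applying essentially formal linear-algebra estimates.

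For the first inequality $i(A) \leq d(A)^{1/2}$, the statement is that the class index of a $\Z$-order in a semisimple $\Q$-algebra is bounded by the square root of its absolute discriminant. This is exactly one of the elementary facts about $i(A)$ and $d(A)$ that is recorded alongside their definitions in \cite{MW95}*{Sec.~2} and \cite{MW95b}*{Sec.~2}, so my approach is simply to cite it.

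For the middle identity $d(A)^{1/2} = (r/4)^{r/2} v(A)$, I would unwind the two definitions with respect to a common $\Z$-basis $e_1, \dots, e_r$ of $\End_k(A)$. By definition, $d(A) = \det(\Tr(e_i e_j))_{i,j}$, where $\Tr$ denotes the trace on $\End_k(A)\otimes \Q$, whereas $v(A)^2 = \det(\langle e_i, e_j\rangle)_{i,j}$ where $\langle x, y\rangle = \Tr(xy^\dagger)$ is the inner product attached to the Rosati involution $\dagger$ defined by the theta polarization, as in \cite{GR14}*{Sec.~2}. Since $A$ is an abelian surface, the Rosati trace pairing and the ordinary representation trace pairing on $\End_k(A)\otimes \R$ differ by a scalar factor of $r/4$ (one has to inspect each of the three cases $r=1,2,4$ where $\End_k(A)\otimes \Q$ is respectively $\Q$, a real quadratic field or quaternion algebra, and a quaternion algebra over a real quadratic field, comparing the reduced trace with the standard representation trace). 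The identity then follows by taking determinants of the Gram matrices, which introduces the factor $(r/4)^{r/2}$ upon taking a square root.

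For the last inequality $v(A) \leq \Lambda^r$, I invoke Hadamard's inequality directly: by definition of $\Lambda = \Lambda(\End_k(A))$, one can choose a $\Z$-basis $e_1, \dots, e_r$ of $\End_k(A)$ with $|e_i| \leq \Lambda$ for all $i$, and then the volume of the fundamental parallelepiped they span is at most $\prod_i |e_i| \leq \Lambda^r$.

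The main obstacle will be the bookkeeping for the middle identity, since \cite{MW95}, \cite{MW95b}, and \cite{GR14} use subtly different normalizations of the trace pairing and of the Rosati norm; the factor $(r/4)^{r/2}$ has to be verified uniformly across the three cases $r\in\{1,2,4\}$. Once this normalization is pinned down, the other two claims follow immediately from the definitions together with Hadamard's inequality.
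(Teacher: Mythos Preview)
Your approach is essentially the same as the paper's, which also handles the three assertions by citing \cite{MW95}*{eqn.~2.2} for the first inequality, invoking the definition (together with the proof of \cite{GR14}*{Lem.~5.3}) for the middle equality, and appealing to the definition for the final bound (which, as you say, is Hadamard's inequality once one unpacks what $\Lambda$ and $v(A)$ mean).

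Two small corrections. First, the inequality $i(A)\leq d(A)^{1/2}$ in \cite{MW95} is stated for a simple abelian variety; you should note that $A$ is $k$-simple here (this is the standing assumption of the subsection). Second, your list of cases for $r=4$ is wrong: a quaternion algebra over a real quadratic field has $\Z$-rank $8$, not $4$. In the present setting the $r=4$ cases are an indefinite quaternion algebra over $\Q$ or a quartic CM field. This does not affect your strategy, but the case analysis you sketch would need to be adjusted accordingly; alternatively, you can simply cite \cite{GR14}*{Lem.~5.3} as the paper does and avoid the case check altogether.
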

\begin{proof} The first inequality is \cite{MW95}*{eqn.~2.2} since $A$ is $k$-simple. The second one is by definition (see also the proof of \cite{GR14}*{Lem.~5.3}). The last one is by definition.
\end{proof}

\begin{prop}\label{24}There exists an isogeny $B'\rightarrow B$ over $k$ of degree at most $2^{48}\Lambda_{B, B'}^{16}v(A)^{16}$. 
\end{prop}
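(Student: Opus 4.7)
The plan is to follow the Gaudron--R\'emond strategy for the effective isogeny theorem, specialized to the principally polarized fourfold $B=A\times A$ with $A$ a geometrically simple abelian surface. The target bound $2^{48}\LamBBp^{16}v(A)^{16}$ has the shape one expects from extracting a short element in the lattice $\Hom_k(B,Z(B'))$ and converting it, via Zarhin's trick, into an isogeny $B'\to B$ whose degree is controlled by AM--GM applied to the eigenvalues of a Rosati square.

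First, I would record that $Z(B')=(B')^4\times (\dB')^4$ is principally polarizable by Zarhin's trick (this is exactly why $Z(B')$ was introduced earlier), while $B$ is already principally polarized by $pr_1^*L\otimes pr_2^*L$. Since $B'$ is isogenous to $B$ over $k$, the $\Z$-module $\Hom_k(B,Z(B'))$ has positive rank, and it carries the Rosati-type norm from \cite{GR14}*{Sec.~2}; by definition of $\LamBBp$, some $\Z$-basis has all elements of norm at most $\LamBBp$. A Minkowski-style argument (as in \cite{GR14}*{Lem.~4.1}) then produces a nonzero $\psi\colon B\to Z(B')$ whose norm is essentially $\LamBBp$ up to a factor depending only on the rank of the lattice, and this rank is controlled by $\rk\End_k(A)$, hence ultimately by $v(A)$ via Lemma~\ref{23}.

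Second, I would exploit the product structure $Z(B')=(B')^4\times(\dB')^4$ to project $\psi$ onto one of its factors, obtaining a nonzero homomorphism $B\to B'$ or $B\to\dB'$. Since $B$ is $\bar k$-isotypic (two copies of the geometrically simple $A$), any such nonzero homomorphism to an isogenous target is automatically an isogeny, and composing with the principal polarization of $B'$ or its dual gives an isogeny $\phi\colon B'\to B$ whose norm is bounded by a fixed constant times the norm of $\psi$. Finally, to turn the norm bound into a degree bound I would invoke the standard inequality $\deg(\phi)\le (\|\phi\|^2/(2\dim B))^{2\dim B}=(\|\phi\|^2/8)^8$ coming from AM--GM applied to the eigenvalues of $\phi^\vee\phi$; with $\dim B=4$ this produces the exponent $16$ on $\LamBBp$ and, after tracking how the norm of the short vector degrades by the Minkowski factor (which is a power of $v(A)$), the exponent $16$ on $v(A)$ as well.

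The main obstacle is purely bookkeeping: normalizing the Rosati norm consistently across $\End_k(A)$, $\End_k(B)=\Mat_2(\End_k(A))$ and $\Hom_k(B,Z(B'))$, checking that projection to a factor and composition with the principal polarization of $B'$ only incur the constants that fit into $2^{48}$, and confirming that the rank-dependent Minkowski factor is absorbed into $v(A)^{16}$ rather than producing an extra $m$-dependent term. Once this is done, the proposition is a direct specialization of the general Gaudron--R\'emond estimate to the abelian surface case, so no new ideas beyond \cite{GR14} are required.
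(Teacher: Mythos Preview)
The paper's proof is a one-line citation: the proposition is a special case of \cite{GR14}*{Prop.~6.2}, and the $\widehat{W}_i$ factors appearing there drop out because $A$ is principally polarized. Your proposal ultimately lands on the same citation, so at that level there is no discrepancy.

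However, your sketch of the mechanism inside \cite{GR14}*{Prop.~6.2} has gaps that would prevent it from standing as an independent argument. First, projecting a single nonzero $\psi\colon B\to Z(B')$ to a factor $B'$ or $\dB'$ gives a homomorphism whose image may well be a proper abelian subvariety isogenous to $A$ rather than to $A\times A$; isotypicity of $B$ does not force surjectivity here. Second, $B'$ carries no specified principal polarization---this is precisely why Zarhin's $Z(B')$ is introduced---so ``composing with the principal polarization of $B'$'' is not available; the reversal of direction in \cite{GR14} goes through the generalized Rosati involution on $\Hom_k(B,Z(B'))$ with respect to the principal polarizations on $B$ and on $Z(B')$. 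Third, $v(A)$ is not a Minkowski degradation factor attached to a single short vector: in the actual argument one uses an entire $\Z$-basis of the relevant $\Hom$-lattice to manufacture the isogeny, and the power $v(A)^{16}$ arises from an index/determinant computation comparing that lattice to $\End_k(A)$, not from shortening one element. None of this affects the correctness of invoking \cite{GR14}*{Prop.~6.2}, but the heuristic you give for how the constants $2^{48}$, $\LamBBp^{16}$, $v(A)^{16}$ assemble does not match what that proposition actually does.
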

\begin{proof}
This is essentially a special case of \cite{GR14}*{Prop.~6.2}. Here we do not need their $\widehat{W}_i$ term since $A$ is principally polarized.
\end{proof}

\begin{lemma}[\cite{Sil}*{Thm.~4.1,~4.2}]\label{FofEnd}
 Given abelian varieties $C, C'$ of dimension $g,g'$ defined over $k$, let $K$ be the smallest field where all the $\ol k$-endomorphisms of $C\times C'$ are defined. Then $[K:k]\leq 4(9g)^{2g}(9g')^{2g'}.$
\end{lemma}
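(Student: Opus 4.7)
The plan is to reduce the statement to Silverberg's Theorems~4.1 and~4.2 via the $\Gal(\bar k/k)$-equivariant direct sum decomposition
\[
\End\bigl((C\times C')_{\bar k}\bigr) \;=\; \End(C_{\bar k}) \,\oplus\, \End(C'_{\bar k}) \,\oplus\, \Hom(C_{\bar k}, C'_{\bar k}) \,\oplus\, \Hom(C'_{\bar k}, C_{\bar k})
\]
coming from the block-matrix description of endomorphisms of a product. The field $K$ is thus the compositum of the four minimal fields of definition of these summands, so it suffices to bound each of them over $k$ and multiply.

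First I would invoke Silverberg's Theorem~4.1, which states that for any abelian variety $A/k$ of dimension $d$, every $\bar k$-endomorphism of $A$ is already defined over $k(A[3])$, together with her bound of the form $(9d)^{2d}$ on the degree of the minimal field of definition of $\End(A_{\bar k})$ over $k$. Applied separately to $C$ and $C'$, this handles the two diagonal blocks with bounds $(9g)^{2g}$ and $(9g')^{2g'}$ respectively.

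Next I would apply Silverberg's Theorem~4.2 to the two off-diagonal Hom-groups. Her argument is parallel: a geometric homomorphism $C\to C'$ is rigidified by its action on sufficiently many torsion points, and hence descends to an extension controlled by the $3$-torsion fields of both factors. The extra factor of $4$ is the combinatorial overhead of coordinating this data across the two sides (essentially a $\{\pm 1\}$ ambiguity on each factor). Multiplying the four bounds together yields the claimed estimate $4(9g)^{2g}(9g')^{2g'}$.

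The main obstacle, were one to avoid citing Silverberg directly, would be to carry out the explicit Galois-theoretic bookkeeping underlying Theorems~4.1 and~4.2: namely, the rigidity step showing that $\bar k$-endomorphisms are detected by their action on $3$-torsion, and the sharp bound on the image of Galois in $\GL_{2g}(\F_3)$ that yields $(9d)^{2d}$ rather than the crude $|\GL_{2g}(\F_3)|$. Since the lemma is an immediate consequence of Silverberg's cited statements, the proof reduces to the decomposition above and an appeal to her theorems.
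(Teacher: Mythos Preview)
Your block decomposition of $\End((C\times C')_{\bar k})$ is correct, and $K$ is indeed the compositum of the fields of definition of the four summands. The gap is in how you bound the off-diagonal pieces. You assert that the two $\Hom$ blocks together contribute only a factor of~$4$ (``a $\{\pm 1\}$ ambiguity on each factor''), so that the product $(9g)^{2g}\cdot (9g')^{2g'}\cdot 2\cdot 2$ recovers the stated bound. This is not what Silverberg's Theorem~4.2 asserts, and it does not follow from rigidity over $3$-torsion: there is no general reason why $\Hom(C_{\bar k},C'_{\bar k})$ should be defined over a degree-$2$ extension of the compositum of the two $\End$-fields. In fact Silverberg's own bound for the field of definition of $\Hom(C_{\bar k},C'_{\bar k})$ is already of the order $4(9g)^{2g}(9g')^{2g'}$; if you inserted that into your product-of-four-bounds scheme you would overshoot by a large factor rather than land on the claimed estimate.

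The paper's proof does not go through the decomposition at all: it is a direct appeal to Silverberg's Theorem~4.2 together with her Corollary~3.3, which give the bound $4(9g)^{2g}(9g')^{2g'}$ outright for the field over which all of $\End((C\times C')_{\bar k})$ is defined. Her mechanism is to embed $\Gal(K/k)$ into a single finite group whose order is then bounded via Corollary~3.3, rather than bounding four extensions separately and multiplying their degrees.
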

\begin{proof}
This inequality is given by \cite{Sil}*{Thm.~4.2} and \cite{Sil}*{Cor.~3.3}. 
\end{proof}

\begin{lemma}\label{27}
Let $m_A$ and $m_{A,B'}$ denote $\max(1,h(A))$ and $\max(1,h(A), h(B'))$ respectively.
We have
\begin{displaymath}
\Lambda\leq 
\begin{cases}
2 & \textrm{ if } \bar{r}=1, \\
4^5\cdot 9^8\left(5.04\cdot10^{24}[k:\Q]m_A\left(\tfrac{5}{4}m_A+\log [k:\Q]+\log m_A+60\right)\right)^{8/\bar{r}}& \textrm{ if } \bar{r}=2\textrm{ or } 4.
\end{cases}
\end{displaymath}
and
\begin{displaymath}
\LamBBp\leq 4^{11}\cdot 9^{12}\left(4.4\cdot 10^{46}[k:\Q]m_{A,B'}\left(9m_{A,B'}+8\log m_{A,B'}+8 \log[k:\Q]+920\right)\right)^{16/\bar{r}}.
\end{displaymath}
\end{lemma}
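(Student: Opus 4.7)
The plan is to specialize the endomorphism-lattice bounds of Gaudron--R\'emond \cite{GR14} to the case of abelian surfaces (for $\Lambda$) and to the case of homomorphism groups between higher-dimensional products (for $\LamBBp$). The proof is essentially a computation: one invokes the general estimates of \cite{GR14}, tracks the dimension and degree parameters, and keeps a close eye on the choice of polarization used to define the Rosati norm.

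For the case $\bar{r}=1$, the ring $\End_{\bar{k}}(A)$ is just $\Z$, hence equal to $\End_k(A)$, and a $\Z$-basis is given by the identity endomorphism. With respect to the Rosati norm associated to the principal polarization coming from the theta divisor, the norm of $\id_A$ on an abelian surface equals $\sqrt{2\dim A}=2$, giving $\Lambda\le 2$ directly from the definition of $\Lambda(\End_k(A))$.

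For $\bar{r}=2$ or $4$, the ring $\End_{\bar{k}}(A)$ is an order in a real quadratic field, in an indefinite quaternion algebra, or in a CM field of degree $4$, and one needs a period-theorem input to bound the norms of a minimal $\Z$-basis. This is precisely \cite{GR14}*{Prop.~7.1} (or the corresponding lemma from Section~7 there), which controls the $\Lambda$-invariant of $\End_k(A)$ in terms of the Faltings height $h(A)$, the degree $[k:\Q]$, the rank $\bar r$, and the degree of the minimal field over which all geometric endomorphisms are defined. The latter quantity is bounded by Lemma~\ref{FofEnd} applied to $C=C'=A$, yielding the factor $4^5\cdot 9^8=4\cdot(9\cdot 2)^{2\cdot 2}$. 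Specializing their inequality to $g=\dim A=2$ and collecting the resulting numerical constants produces the stated bound, with the exponent $8/\bar r$ reflecting how the norm of a basis element grows with the rank of the endomorphism lattice.

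The bound for $\LamBBp$ follows the same recipe, applied now to $\Hom_k(B,Z(B'))$, with $B=A\times A$ of dimension $4$ and $Z(B')=(B')^4\times(\dB')^4$ of dimension $16$; a principal polarization on $B$ is given by $pr_1^*L\otimes pr_2^*L$ and on $Z(B')$ by the fixed choice in Section~\ref{subsec: simple}. The larger numerical constants $4^{11}\cdot 9^{12}$ and $4.4\cdot 10^{46}$, and the need to use $m_{A,B'}=\max(1,h(A),h(B'))$ instead of $m_A$, come respectively from the higher dimensions of $B$ and $Z(B')$ in Lemma~\ref{FofEnd} and from the fact that the period-theorem input for a homomorphism group between two distinct abelian varieties involves the Faltings heights of both. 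I expect the main obstacle to be purely bookkeeping: one has to reconcile the Rosati norms on $\End_k(A)$ and $\Hom_k(B,Z(B'))$ induced by our specific polarizations with the normalizations used in \cite{GR14}, and to confirm that every constant in their general statement specializes cleanly to the numerical values claimed here.
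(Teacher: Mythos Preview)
Your high-level strategy is right: the bound for $\Lambda$ when $\bar r=1$ is exactly as you say (the Rosati norm of $\id_A$ is $\sqrt{\Tr(\id)}=\sqrt{4}=2$), and for $\bar r\in\{2,4\}$ one does invoke the period-theorem machinery of \cite{GR14} together with Silverberg's bound (Lemma~\ref{FofEnd}) on the field of definition of endomorphisms. The paper does precisely this, constructing an auxiliary abelian subvariety $H$ of $A\times A^4$ (resp.\ of $A\times Z(B')^{64}$), bounding $\deg H$ via the theorem of periods and Autissier's lemma, and then using $\Lambda\le[k_1:k](\deg H)^2$.

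However, your sketch for $\LamBBp$ contains a genuine gap. You propose to apply Lemma~\ref{FofEnd} with the dimensions of $B=A\times A$ and $Z(B')$; but $\dim B=4$ and $\dim Z(B')=32$ (not $16$: recall $Z(B')=(B')^4\times(\dB')^4$ with $\dim B'=4$), and plugging $g=4$, $g'=32$ into $4(9g)^{2g}(9g')^{2g'}$ gives a bound astronomically larger than $4^{11}\cdot 9^{12}$. The paper avoids this by a reduction you have not identified: since $B=A^2$, one has $\Hom_k(B,Z(B'))\cong\Hom_k(A,Z(B'))^2$, and it suffices that all endomorphisms of $A\times B'$ be defined over $k_1$ (the dual $\dB'$ introduces no new field of definition). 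Thus Lemma~\ref{FofEnd} is applied with $g=\dim A=2$ and $g'=\dim B'=4$, giving $[k_1:k]\le 4\cdot 18^4\cdot 36^8=4^{11}\cdot 9^{12}$. Without this reduction your bookkeeping would not close to the stated constant. (There is also a minor arithmetic slip in your $\Lambda$ discussion: $4^5\cdot 9^8$ equals $4\cdot 18^8$, i.e.\ $4(9\cdot 2)^{2\cdot 2}(9\cdot 2)^{2\cdot 2}$, not $4\cdot(9\cdot 2)^{2\cdot 2}$; Lemma~\ref{FofEnd} applied with $C=C'=A$ contributes both factors.)
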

\begin{proof}
Recall that $\bar{r}$ denotes the $\Z$-rank of $\End_{\bar{k}}(A)$. To deduce the bound of $\Lambda$, we first study the case $\bar{r}=1$. 
In this case, $\End_{\bar{k}}(A)=\Z$ and by definition the norm of the identity map is $\sqrt{\Tr (\id)}=\sqrt{4}=2$. In other words, $\Lambda=2$.

We postpone the discussion of $\Lambda$ for $\bar{r}=2,4$, since it is a simplified version of the following discussion on the bound of $\LamBBp$.

Let $k_1$ be the field where all the $\ol k$-endomorphisms of $A\times B^{'}$ are defined. Then by Lemma \ref{FofEnd}, we have $[k_1:k]\leq 4\cdot 18^4\cdot 36^8=4^{11}\cdot 9^{12}$.

The estimate of $\LamBBp$ is essentially \cite{GR14}*{Lem.~9.1}. We modify its proof to obtain a sharper bound for this special case. For any complex embedding $\sigma:k_1\rightarrow \C$, we may view $A$ and $Z(B')$ as abelian varieties over $\C$ and let $\Omega_A$ and $\Omega_{Z(B')}$ be the period lattices. As in \cite{GR14}*{Sec.~3}, the principal polarization induces a metric on $\Omega_A$ (resp. $\Omega_{Z(B')}$). Let $\omega_1,\dots, \omega_4$ (resp. $\chi_1,\dots, \chi_{64}$) be a basis of $\Omega_A$ (resp. $\Omega_{Z(B')}$) such that $||\omega_i||\leq \Lambda(\Omega_A)$ (resp.$||\chi_i||\leq \Lambda(\Omega_{Z(B')})$). Let $\omega$ be $(\omega_1,\chi_1,\dots, \chi_{64})\in \Omega_A\oplus (\Omega_{Z(B')})^{64}$ and let $H$ be the smallest abelian subvariety of $A\times (Z(B'))^{64}$ whose Lie algebra (over $\C$) contains $\omega$. Then by \cite{GR14}*{Prop.~7.1, the proof of Prop.~8.2, and the theorem of periods on p. 2095} the bounds
\begin{displaymath}
\Lambda(\Hom_{k_1}(A,Z(B'))\leq (\deg H)^2,
\end{displaymath}
and
\begin{displaymath}
(\deg H)^{1/h}\leq 50[k_1:\Q]h^{2h+6}\max(1,h(H),\log \deg H)||\omega||^2
\end{displaymath}
are satisfied, where $h=\dim H$.
By the proof of \cite{GR14}*{Lem.~8.4}\footnote{where a result of Autissier \cite{autissier}*{Cor.~1.4} is used}, there exists a choice of embedding $\sigma$ such that for any $\epsilon\in (0,1)$,
\begin{displaymath}
||\omega||^2\leq \frac{6}{(1-\epsilon)\pi}\left(16h(A)+8^7h(B')+(16+16^4)\log\left(\frac{2\pi^2}{\epsilon}\right)\right).
\end{displaymath}
By taking $\epsilon=\frac 1{40}$, we have 
$||\omega||^2\leq 5\times 10^6\max(1,h(A),h(B')).$ By \cite{GR14}*{Lem.~8.1}, we have $2\leq h\leq 8/\bar{r}\leq 8$.

Combining the above inequalities, we have the bound
\begin{displaymath}
(\deg H)^{1/h}\leq 1.85\times 10^{28}[k_1:\Q]\max(1,h(A),h(B'))\left(9\max(1,h(A),h(B'))+ \log\deg H +48\right),
\end{displaymath}
where we use the fact (see the discussion in \cite{GR14}*{p.~2096})
\begin{displaymath}
h_F(H)\leq 9\max(1,h(A),h(B'))+ \log\deg H +48.
\end{displaymath}
Then by \cite{GR14}*{Lem.~8.5}\footnote{which is a basic calculating trick and is not specific to bounding the Faltings height}, we have
\begin{displaymath}
\deg H\leq \left(3.7\cdot 10^{28}[k_1:\Q]m_{A,B'}\left(9m_{A,B'}+48+\frac{8}{\bar{r}}\log \left(1.85\cdot 10^{28} [k_1:\Q]\frac {8m_{A,B'}} {\bar{r}}\right)\right)\right)^{8/\bar{r}}.
\end{displaymath}
Then we have (by \cite{GR14}*{Lem.~3.3})
\begin{eqnarray*}
\LamBBp &=& \Lambda(\Hom_k(A,Z(B'))\leq [k_1:k]\Lambda(\Hom_{k_1}(A,Z(B'))\leq [k_1:k](\deg H)^2\\
&\leq &[k_1:k]\left(3.7\cdot 10^{28}[k_1:\Q]m_{A,B'}\left(9m_{A,B'}+48+\frac 8 {\bar{r}}\log \left( 1.85\cdot 10^{28}[k_1:\Q] \frac {8m_{A,B'}} {\bar{r}}\right)\right)\right)^{16/\bar{r}}\\
&\leq & 4^{11}\cdot 9^{12}\left(4.4\cdot 10^{46}[k:\Q]m_{A,B'}\left(9m_{A,B'}+8\log m_{A,B'}+8 \log[k:\Q]+920\right)\right)^{16/\bar{r}}.
\end{eqnarray*}
Now we assume that $\bar{r}=2$ or $4$. In this case we cannot compute $\Lambda$ so we apply the same strategy as for the bound on $\Lambda_{B,B'}$. The proof is practically identical, but the bounds are different. In this case we bound the degree $[k_1:k]\leq 4\cdot 18^8$ and there exists an abelian subvariety $H$ of $A\times A^4$ over $k_1$ such that the bounds
\begin{displaymath}
\Lambda \leq [k_1:k](\deg H)^2
\end{displaymath}
and
\begin{displaymath}
\deg H\leq \left(100\cdot 4^{19}\cdot 9^8\cdot 1063[k:\Q]m_A\left(5m_A+4\log [k:\Q]+4\log m_A+240\right)\right)^{8/\bar{r}}
\end{displaymath}
are satisfied.
Combining these two inequalities together, we obtain the bound for $\Lambda$.
\end{proof}

\begin{proof}[Proof of Theorem \ref{effFal}] 
The proof is a combination of applying the lemmas above. We start by bounding the smallest degree of isogenies from $B'$ to $B$ (for which we use the notation $b(B)$). Let $\phi:B'\rightarrow B$ be an isogeny of the smallest degree $d$. We want to bound $d$ in terms of $h(A)$ and $[k:\Q]$. First, we notice that 
\begin{displaymath}
h(B')\leq h(B)+\tfrac{1}{2}\log \deg(\phi)=2h(A)+\tfrac{1}{2}\log \deg(\phi)=2h(A)+\tfrac{1}{2}\log d.
\end{displaymath}
Then $m_{A,B'}=\max(1,h(A),h(B'))\leq 2h(A)+\tfrac{1}{2}\log d +7,$ since $h(A)\geq -3$ holds. Then by Lemma \ref{27} and the fact $m_{A,B'}\geq \log m_{A,B'}$, we have 
\begin{equation}\label{equation:bound_LambdaBBprime}
\Lambda_{B, B'} \leq c_1 \left( c_2(k)\left(c_3(A,k)+ \tfrac{1}{2} \log d\right)^2\right)^{\frac{16}{\bar{r}}},
\end{equation}
where $\bar{r}=1,2$ or $4$ and the constants are defined as

\begin{displaymath}
\left\{
\begin{aligned}
& c_1 = 4^{11}\cdot 9^{12},\\
& c_2(k)=7.5\cdot 10^{47}[k:\Q],\\
& c_3(A,k)=2h(A)+\tfrac{8}{17}\log [k:\Q]+\tfrac{1039}{17}.
\end{aligned}
\right.
\end{displaymath}

We furthermore introduce the constants
\begin{displaymath}
\left\{
\begin{aligned}
& c_4(A,k)=\sqrt{c_2(k)}c_3(A,k),\\
& c_5(k)= \frac{\sqrt{c_2(k)}}{2},\\
& c_6(A,k)=2^{48}\cdot c_1^{16}\cdot \Lambda^{16r},\\
\end{aligned}
\right.
\end{displaymath}
and we rewrite inequality \eqref{equation:bound_LambdaBBprime} as:
\begin{displaymath}
\Lambda_{B, B'} \leq c_1 [c_4(A,k) + c_5(k) \log d]^{\frac{32}{\bar{r}}}.
\end{displaymath}
Then by Lemmas \ref{25} and \ref{24}, we have
\begin{equation}\label{equation: d_leq}
d=\deg \phi \leq 2^{48}\LamBBp^{16}v(A)^{16}\leq 2^{48}\LamBBp^{16}\Lambda^{16r}\leq c_6(A,k)\left[c_4(A,k) + c_5(k) \log d\right]^{\frac{32\cdot 16}{\bar{r}}}.
\end{equation}
We define $c_7(A,k)=2^{48}\cdot c_1^{16}\cdot c_8(A,k)^{16r}$
with $c_8(A,k)$ defined as
\begin{displaymath}
c_8(A,k)=\begin{cases}
2 & \mathrm{if}\ \bar{r}=1, \\
4^5\cdot 9^8\left(5.04\cdot10^{24}[k:\Q]m_A\left(\tfrac{5}{4}m_A+\log [k:\Q]+\log m_A+60\right)\right)^{8/\bar{r}}& \mathrm{if}\ \bar{r}=2,4.
\end{cases}
\end{displaymath}
Then by Lemma \ref{27}, $c_6(A,k)\leq c_7(A,k)$.
We rewrite inequality \eqref{equation: d_leq} as 
\begin{displaymath}
d^{\frac{\bar{r}}{32\cdot 16}}\leq u(A,k)\left(\tfrac{\bar{r}}{32\cdot 16}\log d +v(A,k)\right),
\end{displaymath}
where 
\begin{displaymath}
\left\{
\begin{aligned}
& u(A,k)=c_7(A,k)^{\frac{\bar{r}}{32\cdot 16}}c_5(A,k)\cdot\frac{32\cdot 16}{\bar{r}},\\
& v(A,k)=\frac{c_4(A,k)\bar{r}}{32\cdot 16c_5(A,k)}.
\end{aligned}
\right.
\end{displaymath}

Then by \cite{GR14}*{Lem.~8.5}, we have
\begin{displaymath}
d^{\frac{\bar{r}}{32\cdot 16}}\leq 2u(A,k)[\log u(A,k)+v(A,k)].
\end{displaymath}
Define 
\begin{displaymath}
C(A,k) = 2 u(A,k)[\log u(A,k) + v(A,k)],
\end{displaymath}
which only depends on $h(A)$ and $[k:\Q]$. Then we find
\begin{displaymath}
b(B)\leq C(A,k)^{\frac{32\cdot 16}{\bar{r}}}.
\end{displaymath}
By Lemma \ref{22} and \ref{23}, we obtain:
\begin{displaymath}
M\leq i(A)b(B)\leq (r/4)^{r/2}c_8(A,k)^{r}C(A,k)^{\frac{32\cdot 16}{\bar{r}}}.
\end{displaymath}

Using $r\leq \bar{r}$, in the case $\bar{r}=1$ we find
$$M\leq 2^{4664}c_1^{16}c_2(k)^{256}\left(2h(A)+\tfrac{8}{17}\log[k:\Q]+8\log c_1+128 \log c_2(k) +1503\right)^{512},$$

and in the case $\bar{r}=2$ or $4$ we find
\begin{displaymath}
\begin{array}{ll}
M\leq & (r/4)^{r/2} 2^{48}\cdot c_1^{16}c_2(k)^{256}\\
& \cdot \left(4^5\cdot 9^8\left(5.04\cdot10^{24}[k:\Q]m_A\left(\tfrac{5}{4}m_A+\log [k:\Q]+\log m_A+60\right)\right)^{8/\bar{r}}\right)^{17r}\\
 & \cdot \left(16\log c_1+\frac{256}{\bar{r}}\log c_2(k)+16r\log c_8(A,k)+4h(A)+\tfrac{16}{17}\log [k:\Q]+1400\right)^{512/\bar{r}}.
\end{array}
\end{displaymath}

The constants $c_1$, $c_2(k)$ and $c_8(A,k)$ only depend on the Faltings height and the degree of the field extension $[k:\Q]$, justifying Remark \ref{remark:2.1}.
\end{proof}

\subsection{$k$-isogenous to product of elliptic curves}\label{prodEC}

Let $E$ and $E'$ be elliptic curves over $k$ such that $A$ is isogenous to $E\times E'$ over $k$. 
We also assume in this subsection that if $E$ is isogenous to $E'$ over $\ol{k}$, then they are isogenous over $k$. Hence we may choose $E'$ such that if $E'\neq E$, then $E'$ is not isogenous to $E$ over $k$. Notice that $E\times E'$ can be endowed with a principal polarization and we will fix the polarization to be the one induced by the line bundle $pr_1^*L_1\otimes pr_2^*L_2$, where $pr_1:E\times E'\rightarrow E$ and $pr_2:E\times E'\rightarrow E'$ are the projections and where $L_1$ and $L_2$ are the line bundles inducing the natural polarizations on $E$ and $E'$. Together with our assumption that $A$ is principally polarized, this improves the bounds in \cite{GR14}. In this subsection, the polarization on the product of polarized abelian varieties is always taken to be the natural product of polarizations.

\begin{theorem}[special case of \cite{GR14}*{Thm.~1.4} with slightly better bound] \label{FalEC}
The minimal degree of the isogeny between $A$ and $E\times E'$ is at most
\begin{displaymath}
C_1(h(A), [k:\Q])=230^8\cdot 3^8\cdot 4^{159}\cdot [k:\Q]^8\left(\tfrac{17}{8} h(A)+\tfrac{5}{6}\log[k:\Q]+30.8\right)^{16}.
\end{displaymath}
\end{theorem}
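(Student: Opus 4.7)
The proof adapts the strategy of Theorem \ref{effFal} to the case where both $A$ and $E\times E'$ are principally polarized abelian surfaces. The improvement over the general bound \cite{GR14}*{Thm.~1.4} comes from the fact that we can work directly with $\Hom_k(A, E\times E')$, without passing through the auxiliary variety $B = A\times A$ and its Zarhin-type product $Z(B') = (B')^4 \times (\hat{B'})^4$ used in Section \ref{subsec: simple}. This reduction in dimension yields the smaller exponents appearing in the statement.

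The plan is in three steps. First, bound $\Lambda := \Lambda(\Hom_k(A, E\times E'))$ by a period argument paralleling Lemma \ref{27}. Let $k_1 \supseteq k$ be the field over which all geometric homomorphisms from $A$ to $E\times E'$ are defined; by Lemma \ref{FofEnd}, $[k_1:k]$ is bounded by an absolute constant. For a suitable complex embedding $\sigma : k_1 \to \C$ chosen via Autissier's estimate \cite{autissier}*{Cor.~1.4}, pick a short period vector $\omega \in \Omega_A \oplus \Omega_{E\times E'}^{\oplus n}$ whose norm is controlled by $\max(h(A), h(E\times E'))$. Letting $H \subseteq A \times (E\times E')^{n}$ be the smallest abelian subvariety whose Lie algebra (over $\C$) contains $\omega$, the Bost--David period theorem \cite{GR14}*{Prop.~7.1} bounds $\deg H$, and hence $\Lambda \leq (\deg H)^2$. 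Because the relevant value of $\dim H$ is at most $2$, the exponents are of order $8$ rather than the much larger values appearing in Lemma \ref{27}.

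Second, a principally polarized analog of Proposition \ref{24}, specialized to equal source and target dimensions, extracts an isogeny $\phi : A \to E\times E'$ of degree at most $C \cdot \Lambda^{16}$. The factors $v(A)^{16}$ and $c_8(A,k)^{17r}$ that appeared in the proof of Theorem \ref{effFal} are not needed here, since we never have to upgrade an isogeny to an endomorphism statement. Third, combining the inequality $h(E\times E') \leq h(A) + \tfrac{1}{2}\log(\deg \phi) + O(1)$ with the bound from the first two steps gives an implicit inequality of the form $d^{1/16} \leq u(A,k)\bigl(\log d + v(A,k)\bigr)$, which is resolved by \cite{GR14}*{Lem.~8.5}, yielding the explicit bound $C_1(h(A),[k:\Q])$ after carefully tracking constants.

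The main obstacle is the bookkeeping: threading explicit constants through the chain Autissier $\to$ Bost--David $\to$ principally polarized Proposition \ref{24} $\to$ implicit-inequality lemma. No new ideas beyond those already used in the proof of Theorem \ref{effFal} are required; the improvement in both the numerical constants (e.g., $4^{159}$ rather than $2^{4664}$) and the exponents ($8$ and $16$ rather than $256$ and $512$) arises solely from the fact that source and target are already principally polarized of the same dimension, so that Zarhin's trick and endomorphism-lattice considerations can be bypassed.
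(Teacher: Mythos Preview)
Your overall three-step strategy (bound $\Lambda$, extract an isogeny, resolve the implicit inequality via \cite{GR14}*{Lem.~8.5}) matches the paper, but two of the key technical inputs are misidentified in a way that would not produce the stated bound.

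First, your Step~2 invokes ``a principally polarized analog of Proposition~\ref{24}'' giving $D \leq C\,\Lambda^{16}$. This is the wrong lemma. The paper instead uses the first part of Lemma~\ref{lemma: isogenyI} (a direct application of \cite{GR14}*{Lem.~3.2} with $h^0=1$), which gives $D \leq 4^2\,\Lambda$, \emph{linear} in $\Lambda$. This is precisely the simplification afforded by having both $A$ and $E\times E'$ principally polarized of the same dimension: no auxiliary $Z(B')$ is needed at all, and no $16$th power of $\Lambda$ appears. With your $\Lambda^{16}$ the final exponent would be far larger than $16$.

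Second, your Step~1 claims $\dim H \leq 2$, but in the paper's setup (Lemma~\ref{lemma: 2.11}, with $H\subset E\times A^4$) the correct bound is $h=\dim H \leq \nu(A) \leq 4$. This gives $\deg H \leq (\dots)^4$ and hence $\Lambda \leq (\deg H)^2 \leq (\dots)^8$; combined with the linear bound $D\leq 16\,\Lambda$ this yields the exponent $8$, and then $16$ after merging the two parenthesized factors into one and substituting $m\leq h(A)+\tfrac12\log D+\tfrac32$. The exponents in the statement work out because of the linear relation in Step~2, not because $\dim H\leq 2$.

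A minor point: the bound $[k_1:k]\leq 4$ used in the paper comes from the elementary fact that endomorphisms of each elliptic curve are defined over a quadratic extension, not from Lemma~\ref{FofEnd}; invoking the latter would produce a much larger constant than the one stated.
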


Although we focus on the case when $A$ is principally polarized, the following theorem for non-principally polarized situation will also be used later.
\begin{theorem}[special case of \cite{GR14}*{Thm.~1.4} with slightly better bound] \label{FalECnp}
We assume that if $E=E'$, then $E$ is without complex multiplication.
Let $B$ be a polarized abelian surface isogenous to $E\times E'$. Then the minimal degree of the isogeny from $B$ to $E\times E'$ is bounded from above by a constant $C_2(h(E),h(E'), [k:\Q])$. More explicitly, let $m=\max(1,h(E),h(E'))$ and write $d=[k:\Q]$, then
\begin{itemize}
\item if $E\neq E'$ and at least one of them is without complex multiplication, we have
\begin{align*}
C_2=& 1.74\cdot 10^{571}d^{66}\left(m+\tfrac{1}{2}\log d\right)^2\\
&\cdot\left(525100m+4.42\cdot 10^8+8.67\cdot 10^6\log d+2^{18}\log\left(m+\tfrac{1}{2}\log d\right)\right)^{128},
\end{align*}
\item if $E\neq E'$ and both of them have complex multiplication, we get
\begin{align*}
C_2=& 8.78\cdot 10^{342}d^{36}\left(m+\tfrac{1}{2}\log d\right)^4\\
&\cdot \left(273m+2449\log d+272\log\left(m+\tfrac{1}{2}\log d\right) + 8.79\cdot 10^4\right)^{64},
\end{align*}

\item if $E=E'$ without complex multiplication, we have
\begin{displaymath}
C_2=3.61\cdot 10^{309}d^{32}\left(273m+2177\log d+8.26\cdot 10^4\right)^{64}.
\end{displaymath}
\end{itemize}
\end{theorem}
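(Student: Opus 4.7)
The plan is to adapt the proof of Theorem~\ref{effFal} to the situation where the target polarized surface has the product form $E \times E'$. As in Proposition~\ref{24}, the Gaudron--R\'emond machinery yields an estimate of the shape
\[
\deg(B \to E \times E') \;\le\; 2^{48}\,\Lambda_{B,\,E \times E'}^{16}\,v(E \times E')^{16},
\]
where $\Lambda_{B, E \times E'}$ controls the Rosati-norm lattice of $\Hom_k(E \times E', Z(B))$ and $v(E \times E') \le \Lambda(\End_k(E \times E'))^{r}$ controls the endomorphism lattice. The task reduces to estimating each of these two parameters and then solving the resulting self-referential inequality for $d = \deg(B \to E \times E')$, exploiting the product structure of the polarization to sharpen the generic $g=2$ bound of \cite{GR14}*{Thm.~1.4}.

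To bound $\Lambda_{B, E \times E'}$ I would mimic the period--subvariety argument in Lemma~\ref{27}. After passing to a finite extension $k_1/k$ of degree controlled by Lemma~\ref{FofEnd} so that all $\bar k$-homomorphisms are defined, choose a complex embedding minimizing the period norms via Autissier's inequality, form a short vector $\omega$ in the Lie algebra of $(E \times E') \times Z(B)^{64}$, and let $H$ be the smallest abelian subvariety whose tangent space contains $\omega$. The theorem of periods together with the standard bound on $h_F(H)$ then gives an inequality of the form $(\deg H)^{1/h} \lesssim [k_1:\Q]\,\max(1, h(E), h(E'), h(B))$ times logarithmic terms. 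Since $B$ is $d$-isogenous to $E \times E'$, Faltings' formula gives $h(B) \le h(E) + h(E') + \tfrac{1}{2}\log d$, so the final inequality is implicit in $d$ and can be unwound via the logarithm-removal trick of \cite{GR14}*{Lem.~8.5}.

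The three cases of the theorem correspond to three structural possibilities for $\End_k(E \times E')$. When $E \not\sim_k E'$ with at least one of $E, E'$ without complex multiplication, the ring decomposes as $\End_k(E) \times \End_k(E')$ of $\Z$-rank $2$ or $3$; when both factors are CM with $E \not\sim_k E'$, the rank is $4$ but the ring is commutative; when $E = E'$ is non-CM, the ring is the non-commutative order $\Mat_2(\Z)$ of rank $4$. Each structure determines a specific dimension $h$ of the auxiliary subvariety and a specific upper bound on $\Lambda(\End_k(E \times E'))$ (trivially a small absolute constant in the third case), which together account for the different exponents $128$ and $64$ and for the presence or absence of the double-logarithmic factor $\log(m + \tfrac{1}{2}\log d)$ across the three bullets.

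The main obstacle is numerical bookkeeping: one has to track constants through Autissier's inequality, the subvariety-degree estimate, the logarithm-removal trick, and the three different values of the pair $(r, h)$ so as to match the explicit constants stated in the theorem. The sharpening over the generic bound of \cite{GR14}*{Thm.~1.4} comes from two features of this setting, namely the clean decomposition $h(B) \le h(E) + h(E') + \tfrac{1}{2}\log d$ afforded by the product polarization on $E \times E'$, and the fact that in the third case the endomorphism lattice of $E \times E$ is completely understood from the arithmetic of $E$ alone, so that no transcendental period estimate is needed for $\Lambda(\End_k)$.
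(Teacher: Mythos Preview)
Your overall strategy (period--subvariety argument, Autissier, the log-removal trick of \cite{GR14}*{Lem.~8.5}, and a case split according to the endomorphism structure) is correct in outline, but your implementation follows the generic proof of Theorem~\ref{effFal} too closely and misses the refinements that actually yield the stated constants and exponents.

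Concretely, the paper does \emph{not} start from the $2^{48}\Lambda^{16}v^{16}$ bound of Proposition~\ref{24}. Instead it uses Lemma~\ref{lemma: isogenyI}, which is specific to the situation where the target is a product of principally polarized elliptic curves: self-duality of $E,E'$ gives
\[
\deg(B\to E\times E')\le 32^4\,\Lambda(\Hom_k(E\times E',Z(B)))^{8}\,v(E)^2v(E')^2,
\]
with exponent $8$ rather than $16$ on $\Lambda$ and with $v(E)^2v(E')^2$ in place of $v(E\times E')^{16}$. Your proposed route would double every exponent (yielding $256$ and $128$ in place of the stated $128$ and $64$) and would not recover the constants. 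The paper then bounds $\Lambda(\Hom_{k_1}(E,Z(B)))$ for a \emph{single} elliptic curve factor via an auxiliary $H\subset E\times Z(B)^{32}$ (Lemma~\ref{lemma: 2.12}), not via $(E\times E')\times Z(B)^{64}$; the point is that $\dim H\le \nu(B)$, which drops to $2$ exactly when $E=E'$ or both curves are CM, explaining the $64$ versus $128$. The volume terms are handled factor by factor: $v(E)=\sqrt{2}$ in the non-CM case, and Lemma~\ref{lemma: 2.13} bounds $v(E)$ in the CM case.

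Finally, your use of Lemma~\ref{FofEnd} to control $[k_1:k]$ is much too coarse here: for elliptic curve factors one has $[k_1:k]\le 2$, $4$, or $1$ in the three cases respectively (endomorphisms of an elliptic curve are defined over an at-most-quadratic extension), and these small bounds are essential for the numerical constants. With the generic Silverberg bound you would not come close to the stated $C_2$.
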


We use the same notation as before and need the following lemmas to prove the theorems.
\begin{lemma}[see also \cite{GR14}*{Lem.~3.2, Prop.~6.2}]\label{lemma: isogenyI}
There exist isogenies $A\rightarrow E\times E'$ and $E\times E'\rightarrow A$ over $k$ of degree at most
\begin{displaymath}
4^2\Lambda(\Hom_k(A,E\times E')).
\end{displaymath}
There exists an isogeny $B\rightarrow E\times E'$ over $k$ of degree at most
\begin{displaymath}
\begin{cases}
32^4 \Lambda\left(\Hom_k\left(E\times E', Z(B)\right)\right)^8v(E)^2v(E')^2,& \mathrm{when}\ E\neq E';\\
32^4 \Lambda\left(\Hom_k\left(E, Z(B)\right)\right)^8v(E)^8,& \mathrm{when}\ E=E'.
\end{cases}
\end{displaymath}
\end{lemma}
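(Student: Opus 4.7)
The strategy is to follow the framework of \cite{GR14} while exploiting that both $A$ and $E\times E'$ are principally polarized abelian surfaces---the latter by the product of the natural polarizations on $E$ and $E'$---which lets us sharpen the constants in the general statements \cite{GR14}*{Lem.~3.2, Prop.~6.2}.

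For the first statement, I would fix a $\Z$-basis of $\Hom_k(A,E\times E')$ realizing $\Lambda:=\Lambda(\Hom_k(A,E\times E'))$ and pick a nonzero element $\phi$ of norm at most $\Lambda$. Since $A$ is $k$-isogenous to $E\times E'$ and both have dimension $2$, any nonzero element of $\Hom_k(A,E\times E')$ is automatically an isogeny. The degree bound for homomorphisms of principally polarized abelian surfaces from \cite{GR14}*{Lem.~3.2} then gives $\deg\phi\le 4^2\Lambda$. For the reverse direction, the principal polarizations identify each variety with its dual, so the dual isogeny $\widehat\phi\colon E\times E'\to A$ has the same degree and sits inside $\Hom_k(E\times E',A)$, a lattice with the same $\Lambda$.

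For the second statement, I would adapt the construction in \cite{GR14}*{Prop.~6.2}. One selects short elements of $\Hom_k(E\times E',Z(B))$, where $Z(B)=B^4\times \widehat B^4$, and assembles them into an isogeny $B\to E\times E'$ via the matching principle of Masser--W\"ustholz as refined in loc.\ cit. Because $E\times E'$ is already principally polarized, the auxiliary $\widehat W_i$-terms appearing in \cite{GR14}*{Prop.~6.2} can be omitted; this is the same simplification used in Proposition~\ref{24} above, and it accounts for the constant $32^4$ together with the exponent $8$ on $\Lambda$. The factor $v(E)^2v(E')^2$ quantifies the obstruction, measured separately on each elliptic factor via the volume of its endomorphism ring, for a short element of the Hom lattice to actually be an isogeny onto $E\times E'$. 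When $E=E'$ we have $E\times E'=E^2$, so one instead works directly with $\Hom_k(E,Z(B))$; this explains the replacement $v(E)^2v(E')^2\rightsquigarrow v(E)^8$, which reflects the eight $\End_k(E)$-linearly-independent homomorphisms needed to assemble an isogeny.

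The main obstacle I anticipate is the careful bookkeeping: one must verify that the homomorphism assembled from short vectors of the relevant Hom lattice is indeed an isogeny (surjective with finite kernel), and check that dropping the $\widehat W_i$-terms produces precisely the claimed constants $4^2$ and $32^4$ rather than something slightly larger. Once these accounting issues are settled, both bounds follow by combining Minkowski-type volume estimates on the Hom lattices with the matching principle of \cite{GR14}.
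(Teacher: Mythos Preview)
Your argument for the first assertion contains a genuine error: the claim that ``any nonzero element of $\Hom_k(A,E\times E')$ is automatically an isogeny'' is false in this setting. Since $A$ is $k$-isogenous to $E\times E'$, there is a nonzero quotient map $A\to E$; composing with the inclusion $E\hookrightarrow E\times E'$ into the first factor gives a nonzero homomorphism whose image is one-dimensional, hence not an isogeny. So picking an arbitrary short vector in the Hom lattice need not produce an isogeny, and your degree bound does not immediately follow. The paper sidesteps this entirely by invoking \cite{GR14}*{Lem.~3.2} as a black box with $h^0=1$; that lemma already produces an \emph{isogeny} (not merely a homomorphism) of the stated degree. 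For the reverse direction, rather than taking the dual of a specific isogeny, the paper observes that the Rosati involution gives an isometry $\Hom_k(A,E\times E')\cong\Hom_k(E\times E',A)$, so the two $\Lambda$-values coincide and the same citation applies symmetrically.

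Your treatment of the second assertion is essentially aligned with the paper's: both note that $E\cong\widehat E$ and $E'\cong\widehat{E'}$ allow one to drop the auxiliary $\widehat W_i$-terms in the proof of \cite{GR14}*{Prop.~6.2}, yielding the stated constants. Your heuristic for the $v(E)$ exponents is plausible but vague; the paper simply defers to the proof of \cite{GR14}*{Prop.~6.2} rather than re-deriving the bookkeeping.
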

\begin{proof}
Since both $A$ and $E\times E'$ are principally polarized, the Rosati involution induces the isometry $\Hom_k(A,E\times E')\cong \Hom_k(E\times E',A)$ and hence there is the equality of suprema $\Lambda(\Hom_k(A,E\times E'))=\Lambda( \Hom_k(E\times E',A))$. Then the first assertion follows directly from \cite{GR14}*{Lem.~3.2} by noticing that $h^0=1$.

The second assertion follows from the proof of \cite{GR14}*{Prop.~6.2} by noticing that $E$ and $E'$ are naturally isomorphic to their duals. 
\end{proof}

\begin{lemma}\label{lemma: 2.11}
Assume that all the endomorphisms of $E\times E'\times A$ are defined over $k_1$. Then there exists an abelian subvariety $H\subset E\times A^4$ satisfying
\begin{enumerate}
\item $\Lambda(\Hom_{k_1}(E,A))\leq (\deg H)^2,$ where the degree is with respect to the natural polarization on $E\times A^4$; and\label{2.11.1}
\item the degree of $H$ is at most
\begin{displaymath}
\left(230[k_1:\Q]\cdot 4^{16}\left(h(E)+16h(A)+79\right)\left(\log[k_1:\Q]+3h(A)+\tfrac{3}{8}h(E)+41\right)\right)^4.
\end{displaymath}
\end{enumerate}
The same result holds for $E'$.
\end{lemma}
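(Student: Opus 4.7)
The plan is to follow the template of the proof of Lemma~\ref{27} in the $\bar{r}=2,4$ case, adapting it from the endomorphism setting to the Hom-setting. First I construct $H$ as the smallest abelian subvariety of $E \times A^4$ whose complex Lie algebra contains a carefully chosen period vector $\omega$; then (\ref{2.11.1}) follows from Gaudron--R\'emond's correspondence between abelian subvarieties and homomorphisms, and (2) follows from the period theorem combined with Autissier's averaging over complex embeddings and the calculational trick of \cite{GR14}*{Lem.~8.5}.

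\textbf{Construction of $H$ and proof of (\ref{2.11.1}).} Fix a complex embedding $\sigma: k_1 \hookrightarrow \C$ and view $E$ and $A$ as complex abelian varieties with period lattices $\Omega_E$ and $\Omega_A$ equipped with the norms induced by the fixed principal polarizations. Choose a short basis $\omega_1,\dots,\omega_4$ of $\Omega_A$ with $\|\omega_i\| \leq \Lambda(\Omega_A)$ and a shortest nonzero $\omega_E \in \Omega_E$ with $\|\omega_E\| \leq \Lambda(\Omega_E)$. Let $\omega = (\omega_E, \omega_1, \omega_2, \omega_3, \omega_4) \in \Omega_E \oplus \Omega_A^4$ and take $H \subseteq E \times A^4$ to be the smallest abelian subvariety whose complex Lie algebra contains $\omega$. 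Since every such minimal subvariety is Galois-stable and all $\bar k$-endomorphisms of $E \times E' \times A$ are defined over $k_1$ by hypothesis, $H$ is defined over $k_1$. Applying \cite{GR14}*{Prop.~7.1} in the form used for Lemma~\ref{27} then yields (\ref{2.11.1}): a short $\Z$-basis of $\Hom_{k_1}(E,A)$ can be read off from $H$, and its Rosati norm is controlled by $(\deg H)^2$.

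\textbf{Proof of (2).} The period theorem \cite{GR14}*{Prop.~8.2} gives
\begin{displaymath}
(\deg H)^{1/h} \leq 50\,[k_1:\Q]\,h^{2h+6}\,\max(1, h_F(H), \log \deg H)\,\|\omega\|^2,
\end{displaymath}
where $h = \dim H \leq 4$, the bound on $h$ coming from \cite{GR14}*{Lem.~8.1} applied to the present configuration. A suitable embedding $\sigma$ is selected via Autissier's averaging \cite{GR14}*{Lem.~8.4} with $\epsilon = 1/40$ as in Lemma~\ref{27}, producing an explicit linear bound
\begin{displaymath}
\|\omega\|^2 \leq C_1\bigl(h(E) + 16\,h(A) + 79\bigr)
\end{displaymath}
for a numerical constant $C_1$; the asymmetric coefficients $1$ and $16$ reflect the single copy of $\Omega_E$ against four copies of $\Omega_A$ in the construction of $\omega$. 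The Faltings height of $H$ is controlled by the standard estimate on heights of abelian subvarieties of $E \times A^4$ from \cite{GR14}*{p.~2096}, giving $h_F(H) \leq 3\,h(A) + \tfrac{3}{8}\,h(E) + \log \deg H + C_2$. Substituting these into the period theorem produces an inequality of the form $(\deg H)^{1/4} \leq a\,(b + \log \deg H)$, which \cite{GR14}*{Lem.~8.5} then solves in closed form to give the stated bound. The same argument with the roles of $E$ and $E'$ reversed handles the $E'$ case.

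\textbf{Main obstacle.} The overall architecture of the proof is entirely standard after Lemma~\ref{27}; the main difficulty is the bookkeeping required to recover the precise numerical constants $230$, $4^{16}$, $79$, and $41$. In particular the asymmetric weighting of $h(E)$ and $h(A)$ must be tracked carefully through both Autissier's lemma (where the ratio $1:16$ originates from the period-lattice structure $\Omega_E \oplus \Omega_A^4$) and the Faltings-height estimate for $H$ (where the coefficient $3/8$ for $h(E)$ originates from the imbalance in dimensions $1$ versus $8$), and the dimension bound $\dim H \leq 4$ must be verified by a direct inspection of \cite{GR14}*{Lem.~8.1} applied in this Hom-setting rather than to endomorphisms.
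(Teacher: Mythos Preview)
Your overall architecture is exactly the paper's: construct $H$ as the smallest abelian subvariety of $E\times A^4$ whose tangent space contains the period vector $\omega=(\chi_1,\omega_1,\dots,\omega_4)$, invoke \cite{GR14}*{Prop.~7.1} for part~(\ref{2.11.1}), then feed the period theorem, Autissier's lemma, and the subvariety height bound into \cite{GR14}*{Lem.~8.5}. So the strategy is correct and matches the paper.

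However, two of your numerical attributions are off and would not reproduce the stated constants. First, the paper takes $\epsilon=\tfrac16$ here (not $\tfrac{1}{40}$ as in Lemma~\ref{27}), obtaining
\[
\|\omega\|^2\le \frac{4}{\lambda(\Omega_E)^2}+\frac{64}{\lambda(\Omega_A)^2}\le \frac{36}{5\pi}\bigl(4h(E)+64h(A)+316\bigr);
\]
the ratio $4:64$ comes from \emph{both} the four copies of $\Omega_A$ and the Minkowski-type bound $\Lambda(\Omega)^2\le (\rk\Omega)^2/\lambda(\Omega)^2$ (ranks $2$ versus $4$), not from the copy count alone. Second, and more importantly, your claimed height bound $h_F(H)\le 3h(A)+\tfrac38 h(E)+\log\deg H+C_2$ is wrong: the estimate actually used is
\[
h(H)\le h(E\times A^4)+\log\deg H+\tfrac32\bigl(\dim(E\times A^4)-\dim H\bigr)\le h(E)+4h(A)+\log\deg H+12,
\]
with coefficients $1$ and $4$. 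The coefficients $\tfrac38$ and $3$ in the \emph{final} bound do not come from $h_F(H)$ at all; they appear only after \cite{GR14}*{Lem.~8.5} is applied, when the term $\log\bigl([k_1:\Q](h(E)+16h(A)+79)\bigr)$ arising from $\log u$ is absorbed into a linear expression in $h(E)$, $h(A)$, and $\log[k_1:\Q]$ and combined with the additive contribution. So your identification of the ``imbalance in dimensions $1$ versus $8$'' as the source of $\tfrac38$ is a misattribution, and carrying your version of the height bound through the calculation would not land on the constants in the statement.
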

\begin{proof}
We follow the proof of \cite{GR14}*{Prop.~7.1, Prop.~8.2}. For any complex embedding $\sigma:k_1\rightarrow \C$, we may view $A$ and $E$ as abelian varieties over $\C$ and let $\Omega_A, \Omega_E$ be the period lattices. The polarizations induce metrics on the lattices (see \cite{GR14}*{Sec.~3}).
Let $\omega_1,\cdots, \omega_4$ be a basis of $\Omega_A$ such that for each of them $||\omega_j||\leq \Lambda(\Omega_A)$ holds and let $\chi_1,\chi_2$ be a basis of $\Omega_E$ such that $||\chi_i||\leq \Lambda(\Omega_E)$ holds for $i=1,2$.
Let $\omega$ be $(\chi_1,\omega_1,\cdots, \omega_4)$ and $H$ be the smallest abelian subvariety with tangent space containing $\omega$. From now on, we fix the complex embedding to be one for which $\frac{4}{\lambda(\Omega_E)^2}+\frac{64}{\lambda(\Omega_A)^2}$ is smallest, where $\lambda$ is the minimal length of non-zero elements in the lattice.
Then the same argument as in the proof of \cite{GR14}*{Prop.~8.2} shows that the conditions in \cite{GR14}*{Prop.~7.1} hold and hence we obtain \eqref{2.11.1}.

To bound $\deg H$, we apply the theorem of periods (see \cite{GR14}*{pp.~2095}) (and ignore the contributions of other embeddings):
\begin{displaymath}
\deg H^{1/h}\leq 50[k_1:\Q]h^{2h+6}\max(1,h(H),\log \deg H)||\omega||^2,
\end{displaymath}
where $h=\dim H$. From \cite{GR14}*{Lem.~8.1}, we have $h\leq \nu(A)\leq 4$.

Moreover, by a result of Autissier
\cite{autissier}*{Cor.~1.4}, (taking $\epsilon$ to be $\frac 16$,) we have
\begin{displaymath}
||\omega||^2 \leq \frac 4 {\lambda(\Omega_E)^2}+\frac {64}{\lambda(\Omega_A)^2} \leq \tfrac{36}{5\pi}(4h(E)+64h(A)+316).
\end{displaymath}

On the other hand, using the lower bound on Faltings' height, we have (see \cite{GR14a}*{pp.~352})
\begin{align*}
h(H) & \leq h(E\times A^4)+\log \deg H+\tfrac{3}{2}(\dim(E\times A^4)-\dim H)\\
& \leq h(E)+4h(A)+\log \deg H +12.
\end{align*}
Combining the above inequalities, we arrive at
\begin{align*}
(\deg H)^{1/4} & \leq 115[k_1:\Q] h^{2h+6}\left(4h(E)+64h(A)+316\right)\max(1,h(H), \log \deg H)\\
& \leq 115[k_1:\Q] 4^{14}\left(4h(E)+64h(A)+316\right)\left(h(E)+4h(A)+15+\log \deg H\right).
\end{align*}
By \cite{GR14}*{Lem.~8.5}, we have
\begin{align*}
\deg H \leq &\left(230[k_1:\Q]\cdot 4^{16}\left(h(E)+16h(A)+79\right)\right)^4\\
& \cdot \left(\log([k_1:\Q]\left(h(E)+16h(A)+79)\right)+h(A)+h(E)/4+31\right)^4\\
\leq &\left(230[k_1:\Q]\cdot 4^{16}\left(h(E)+16h(A)+79\right)\left(\log[k_1:\Q]+3h(A)+\tfrac{3}{8}h(E)+41\right)\right)^4.
\end{align*}
\end{proof}

\begin{lemma}\label{lemma: 2.12}
Assume that all the endomorphisms of $E\times E'\times B$ are defined over $k_1$. Then there exists an abelian subvariety $H\subset E\times Z(B)^{32}$ satisfying
\begin{enumerate}
\item $\Lambda(\Hom_{k_1}(E,Z(B)))\leq (\deg H)^2;\label{2.12.1} and$
\item the degree of $H$ with respect to the polarization on $E\times Z(B)^{32}$ is at most
\begin{displaymath}
\left(230[k_1:\Q]4^{15}\left(4h(E)+2^{18}h(B)+1.26\times10^6\right)\left(0.251h(E)+72h(B)+254.5+\log [k_1:\Q]\right)\right)^4.
\end{displaymath}
\end{enumerate}
The same result holds for $E'$.
Moreover, the degree of $H$ is bounded from above by 
\begin{displaymath}
\left(230[k_1:\Q]2^{11}\left(4h(E)+2^{18}h(B)+1.26\times 10^6\right)\left(0.51h(E)+136h(B)+\log [k_1:\Q]+434\right)\right)^2
\end{displaymath}
when either $E=E'$ holds, or when both $E$ and $E'$ have complex multiplication.
\end{lemma}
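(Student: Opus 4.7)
The proof will follow the template of Lemma \ref{lemma: 2.11} and Lemma \ref{27}, with the abelian surface $A$ there replaced by $Z(B) = B^4 \times \widehat{B}^4$ (of dimension $16$), and the ambient product $E \times A^4$ replaced by $E \times Z(B)^{32}$. The reason for taking $32$ copies is that the period lattice $\Omega_{Z(B)}$ has rank $32$, so a vector built from a full basis of $\Omega_{Z(B)}$ will force the smallest abelian subvariety through it to contain enough information to control $\Lambda(\Hom_{k_1}(E,Z(B)))$.

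First I would fix a complex embedding $\sigma : k_1 \hookrightarrow \C$ that minimizes $\frac{4}{\lambda(\Omega_E)^2}+\frac{64}{\lambda(\Omega_{Z(B)})^2}$, choose short bases $\chi_1,\chi_2$ of $\Omega_E$ and $\omega_1,\ldots,\omega_{32}$ of $\Omega_{Z(B)}$ realizing the respective $\Lambda$-invariants (so $||\chi_i||\leq \Lambda(\Omega_E)$ and $||\omega_j||\leq \Lambda(\Omega_{Z(B)})$), and form the period vector
\begin{displaymath}
\omega = (\chi_1,\omega_1,\ldots,\omega_{32}) \in \Omega_E \oplus \Omega_{Z(B)}^{32}.
\end{displaymath}
Let $H$ be the smallest abelian subvariety of $E \times Z(B)^{32}$ whose Lie algebra contains $\omega$. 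Claim~(1) then follows verbatim from the corresponding step in the proof of Lemma \ref{lemma: 2.11}, by verifying the hypotheses of \cite{GR14}*{Prop.~7.1}.

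For claim~(2), I would apply the theorem of periods (\cite{GR14}*{pp.~2095}) to get
\begin{displaymath}
(\deg H)^{1/h} \leq 50\,[k_1:\Q]\,h^{2h+6}\,\max(1,h(H),\log\deg H)\,||\omega||^2,
\end{displaymath}
with $h = \dim H$, and then insert three explicit estimates: a dimension bound $\dim H \leq 4$ via the analog of \cite{GR14}*{Lem.~8.1} applied to $Z(B)$; Autissier's bound of the form $||\omega||^2 \leq \frac{36}{5\pi}\bigl(4h(E)+2^{18}h(B)+C\bigr)$ coming from \cite{autissier}*{Cor.~1.4} (with $\epsilon = 1/6$, say) applied at $\sigma$, using the additivity $h(Z(B)) = 8\,h(B)$ (which follows from $h(\widehat{B}) = h(B)$); and the standard Faltings-height comparison $h(H) \leq h(E)+32\,h(Z(B))+\log\deg H+\tfrac{3}{2}(513-h)$. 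A single invocation of the calculating trick of \cite{GR14}*{Lem.~8.5} will then yield the claimed fourth-power bound.

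For the sharper quadratic bound when $E = E'$, or when both $E$ and $E'$ have complex multiplication, the same argument applies but with $\dim H \leq 2$ rather than $4$: in these cases the extra endomorphisms of $E \times Z(B)$ (coming from the diagonal when $E = E'$, or from the CM action otherwise) force the smallest abelian subvariety through $\omega$ to have dimension at most $2$, and repeating the previous step with $h \leq 2$ produces the squared rather than fourth-power bound. The main obstacle throughout will be the careful bookkeeping of the explicit constants $2^{18}$, $1.26 \cdot 10^{6}$, $0.251$, and $72$, which arise from the precise shape of Autissier's inequality applied to the $33$-fold product and from the conversion between $h(Z(B))$ and $h(B)$; no new conceptual ingredients beyond those already used in the proof of Lemma \ref{lemma: 2.11} are required.
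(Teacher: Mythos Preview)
Your approach is correct and matches the paper's proof essentially line by line: form $\omega=(\chi_1,\omega_1,\ldots,\omega_{32})$, take the smallest abelian subvariety $H$ through it, invoke \cite{GR14}*{Prop.~7.1} for part~(1), then combine the theorem of periods, Autissier's lemma, and the Faltings-height comparison, closing with \cite{GR14}*{Lem.~8.5}. Two small corrections: the quantity to minimize when choosing $\sigma$ should be $\tfrac{4}{\lambda(\Omega_E)^2}+\tfrac{32^3}{\lambda(\Omega_{Z(B)})^2}$ (the lattice $\Omega_{Z(B)}$ has rank $32$, so $\Lambda(\Omega_{Z(B)})^2\leq 32^2/\lambda(\Omega_{Z(B)})^2$, and there are $32$ copies), not $64$; and the dimension bound $h\leq 4$ (resp.\ $h\leq 2$) is really $h\leq \nu(B)$ from \cite{GR14}*{Lem.~8.1} applied to $B$ rather than to $Z(B)$, which is what drops to $2$ precisely when $E=E'$ or both curves have CM.
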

\begin{proof}
The idea is the same as in the proof of Lemma \ref{lemma: 2.11}. Let $\chi_1,\chi_2$ be a basis of $\Omega_E$ such that $||\chi_i||\leq \Lambda(\Omega_E)$ for $i=1,2$ and $\omega_1,\cdots, \omega_{32}$ a basis of $\Omega_{Z(B)}$ such that $||\omega_j||\leq \Lambda(\Omega_{Z(B)})$ for $j=1,\ldots,32$.
Let $\omega$ be $(\chi_1,\omega_1,\cdots,\omega_{32})$ in $\Omega_{E\times Z(B)^{32}}$ and $H$ be the smallest abelian subvariety with tangent space containing $\omega$. Then we obtain \eqref{2.12.1} in a way similar to the proof of the previous lemma.

We have
\begin{displaymath}
(\deg H)^{1/h}\leq 50[k_1:\Q]h^{2h+6}\max(1,h(H),\log \deg H)||\omega||^2_\sigma,
\end{displaymath}
where $h=\dim H\leq \nu(B)\leq 4.$ 
Moreover, we have (by a result of \cite{autissier}*{Cor.~1.4})
\begin{align*}
||\omega||^2 &=||\chi_1||^2+\sum ||\omega_j||^2\leq \Lambda(\Omega_E)^2+32\Lambda(\Omega_{Z(B)})^2\leq \frac{4}{\lambda(\Omega_E)^2}+\frac {32^3}{\lambda(\Omega_{Z(B)})^2}\\
&\leq \frac{6}{(1-\epsilon)\pi}\left(4h(E)+2\log\left(\frac{2\pi^2}{\epsilon}\right)+32^3\cdot 8 h(B)+32^3\cdot 8\log\left(\frac{2\pi^2}{\epsilon}\right)\right),
\end{align*}
for any $\epsilon\in (0,1)$. 
On the other hand there is the bound
\begin{align*}
h(H) & \leq h(E\times Z(B)^{32})+\log \deg H +\tfrac{3}{2} (\dim(E\times Z(B)^{32})-\dim H)\\
&\leq h(E)+2^8h(B)+\log \deg H+3\cdot 2^8.
\end{align*}
Then we have (by taking $\epsilon=\frac{1}{6}$)
\begin{align*}
(\deg H)^{1/h}\leq &115[k_1:\Q]\cdot 4^{14}\left(h(E)+2^8h(B)+\log \deg H+3(2^8+1)\right)\\
&\cdot(4h(E)+2^{18}h(B)+1.26\times 10^6).
\end{align*}

We conclude 
\begin{align*}
\deg H\leq \Big(230&[k_1:\Q]\cdot 4^{15}\left(4h(E)+2^{18}h(B)+1.26\times10^6\right)\\
& \cdot \left(0.251h(E)+72h(B)+\log [k_1:\Q]+254.5\right)\Big)^4.
\end{align*}
If either $E$ and $E'$ are equal, or if both of them have complex multiplication, then we notice $h\leq \nu(B)\leq 2$ and hence the same argument as above provides the better bound
\begin{align*}
\deg H\leq \Big(230&[k_1:\Q]\cdot 2^{11}\left(4h(E)+2^{18}h(B)+1.26\times 10^6\right)\\
&\cdot \left(0.51h(E)+136h(B)+\log [k_1:\Q]+434\right)\Big)^2.
\end{align*}
\end{proof}

\begin{proof}[Proof of Theorem \ref{FalEC}]
Our situation satisfies $[k_1:k]\leq 4$ since all endomorphism of an elliptic curve are defined over some quadratic extension. Then by the above lemmas \ref{lemma: isogenyI} and \ref{lemma: 2.11}, we bound the minimal degree $D$ of the isogeny between $A$ and $E\times E'$ by
\begin{align*}
D & \leq 4^2\Lambda(\Hom_k(A,E\times E'))\\
& \leq 4^2 [k_1:k]\Lambda(\Hom_{k_1}(A,E\times E'))\\
& \leq 4^3 \left(230[k:\Q]4^{17}\left(m+16h(A)+79\right)\left(\log[k:\Q]+3h(A)+\tfrac{3}{8}m+42.5\right)\right)^8\\
& \leq 230^8\cdot 3^8\cdot4^{127} \cdot [k:\Q]^8\cdot\left(17h(A)+115+\tfrac{8}{3} \log [k:\Q] +\tfrac{1}{2}\log D\right)^{16},
\end{align*}
where $m=\max(h(E),h(E'))\leq h(A)+\frac{1}{2} \log D +\frac{3}{2}$. In order to arrive at the third line, we have used the bound on $[k_1:k]$ stated in the first line of the proof. To arrive at the fourth line, we bound both factors within the parentheses by the same factor that appears in the fourth line, taking $\left(\frac{3}{8}\right)^8$ out of the parentheses and into the leading factor.

By \cite{GR14}*{Lem.~8.5}, we have
\begin{displaymath}
D\leq 230^8\cdot 3^8\cdot 4^{159}\cdot [k:\Q]^8\left(\tfrac{17}{8} h(A)+\tfrac{5}{6} \log[k:\Q]+30.8\right)^{16}.
\end{displaymath}
\end{proof}

In the proof of Theorem \ref{FalECnp}, we need the following lemma.
\begin{lemma}\label{lemma: 2.13}
Let $E$ be an elliptic curve over $k$ with complex multiplication. We have 
\begin{displaymath}
v(E)=\vol(\End_k(E))\leq 52[k:\Q]\max(1,h(E)+\tfrac{1}{2}\log[k:\Q]).
\end{displaymath}
\end{lemma}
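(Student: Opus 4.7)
The plan is to identify $v(E)$ with the square root of the discriminant of the order $\End_k(E)$ and then bound that discriminant by combining class field theory for CM with an effective lower bound on the Faltings height of a CM elliptic curve.

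First, let $\mathcal{O}=\End_k(E)$, an order of conductor $f$ in the imaginary quadratic CM field $K$ with fundamental discriminant $D_K$. The Rosati involution on $\mathcal{O}$ coincides with complex conjugation in $K$, and with the normalization used in Section~\ref{Effective_Faltings}, namely $\|\phi\|^2=\Tr(\phi^\vee\phi)=2\deg(\phi)$ for an elliptic curve, a direct Gram-matrix computation on any $\Z$-basis $\{1,\omega\}$ of $\mathcal{O}$ yields
$$v(E)^2 \;=\; 4N_{K/\Q}(\omega)-\Tr_{K/\Q}(\omega)^2 \;=\; |\mathrm{disc}(\mathcal{O})| \;=\; f^2\,|D_K|.$$
The lemma is thus reduced to bounding $f\sqrt{|D_K|}$ in terms of $h(E)$ and $[k:\Q]$.

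Next, I would invoke the main theorem of complex multiplication: since every element of $\mathcal{O}$ acts on $E$ as a $k$-rational isogeny, we must have $K\subseteq k$, and since $j(E)\in k$ and $K(j(E))=H_\mathcal{O}$ (the ring class field of $\mathcal{O}$), we conclude $H_\mathcal{O}\subseteq k$. Consequently $[k:\Q]\geq [H_\mathcal{O}:\Q]=2h(\mathcal{O})$, where $h(\mathcal{O})$ is the class number of the order. Applying an effective Siegel-type lower bound for the class number of an imaginary quadratic order---for instance Oesterl\'e's explicit form $h(\mathcal{O})\gg \sqrt{|\mathrm{disc}(\mathcal{O})|}/\log|\mathrm{disc}(\mathcal{O})|$, with the finite list of small-discriminant exceptions handled by direct enumeration---yields
$$v(E)\;\ll\; [k:\Q]\,\log|\mathrm{disc}(\mathcal{O})|.$$
To replace $\log|\mathrm{disc}(\mathcal{O})|$ by $h(E)$, I would then invoke the Chowla--Selberg formula together with its Nakkajima--Taguchi extension to non-maximal orders, which gives an effective bound of the shape $h(E)\geq \tfrac{1}{4}\log|\mathrm{disc}(\mathcal{O})|-C_0$ for an explicit constant $C_0$. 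Substituting back produces $v(E)\ll [k:\Q](h(E)+C_1)$; bootstrapping the inequality $[k:\Q]\geq 2h(\mathcal{O})\gg v(E)/\log v(E)$ one further time converts any residual $\log|\mathrm{disc}(\mathcal{O})|$ into $\log[k:\Q]$, producing the stated shape.

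The principal obstacle is quantitative bookkeeping: matching the specific constant $52$ and the precise form $h(E)+\tfrac{1}{2}\log[k:\Q]$ demands sharp effective constants in Oesterl\'e's class number lower bound and in the explicit Chowla--Selberg/Colmez evaluation of $h(E)$, together with a careful separate treatment of the finitely many small discriminants where the effective class number bound is weakest. The qualitative argument is robust, but the numerical coefficient requires care.
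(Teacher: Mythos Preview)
Your approach is genuinely different from the paper's. The paper simply quotes \cite{GR14}*{Prop.~10.1}: there Gaudron and R\'emond construct, via the period theorem (i.e., transcendence methods applied to the period lattice of $E$), an endomorphism $\psi\in\End_k(E)$ with $\deg\psi\leq 668[k:\Q]^2\max(1,h(E)+\tfrac12\log[k:\Q])^2$, and then observe $v(E)\leq 2\sqrt{\deg\psi}$. No class numbers or $L$-functions enter at all.

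Your route through class field theory and analytic number theory has a real gap. The bound you invoke, ``$h(\mathcal O)\gg\sqrt{|\mathrm{disc}(\mathcal O)|}/\log|\mathrm{disc}(\mathcal O)|$ with explicit constants,'' is precisely Siegel's theorem, which is famously \emph{ineffective}: no explicit constant is known, and the existence of Siegel zeros is exactly the obstruction. Oesterl\'e's work does not provide such a bound; the best effective lower bounds (Goldfeld--Gross--Zagier) are only of strength $h_K\gg_\epsilon (\log|D_K|)^{1-\epsilon}$, which is far too weak for your purposes. Concretely, after your reduction you need $1/L(1,\chi_{D_K})\ll h(E)+\tfrac12\log[k:\Q]$ with an explicit implied constant, and this is not available from the class-number/Chowla--Selberg side. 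It is conceivable that one could exploit the fact that a putative Siegel zero forces $L'(1,\chi)/L(1,\chi)$, and hence (via Chowla--Selberg) $h(E)$, to be large---so that the two ineffectivities cancel---but you have not argued this, and making it quantitative with the constant $52$ would require substantial new work. The transcendence-theoretic argument in \cite{GR14} sidesteps the Siegel-zero problem entirely, which is why the paper's proof is so short.

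Your opening computation $v(E)^2=|\mathrm{disc}(\mathcal O)|$ is correct and worth knowing; it is the subsequent analytic input that fails.
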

\begin{proof}
This lemma can be deduced from the proof of \cite{GR14}*{Prop.~10.1}. We may assume that all endomorphisms of $E$ are defined over $k$ since otherwise $v(E)=\sqrt{2}$ holds and the lemma holds trivially. Let $\psi\in \End_k(E)$ as defined in the proof of \cite{GR14}*{Prop.~10.1}. Their proof shows $v(E)\leq 2\sqrt{\deg \psi}$ and 
\begin{displaymath}
\deg \psi \leq 668[k:\Q]^2\max\left(1,h(E)+\tfrac{1}{2}\log [k:\Q]\right)^2.
\end{displaymath}
Combining these two inequalities, we obtain the desired inequality.
\end{proof}

\begin{proof}[Proof of Theorem \ref{FalECnp}]
Let $D$ be the minimal possible degree of an isogeny $B\rightarrow E\times E'$. 

If $E\neq E'$ holds and at least one of them, say $E'$, is without complex multiplication, then $v(E')=\sqrt{2}$ and $[k_1:k]\leq 2$ hold. By the lemmas \ref{lemma: isogenyI} and \ref{lemma: 2.12}, we have 
\begin{align*}
D\leq  &32^4 \Lambda(\Hom_k(E\times E', Z(B)))^8v(E)^2v(E')^2 \\
\leq  & 32^4[k_1:k]^8\Lambda(\Hom_{k_1}(E\times E', Z(B))^8v(E)^2v(E')^2\\
\leq  & \left(230[k_1:\Q]4^{15}\left(4m+2^{18}h(B)+1.26\times10^6\right)\left(0.251m+72h(B)+254.5+\log [k_1:\Q]\right)\right)^{64}\\
& \cdot  2^{28}v(E)^2v(E')^2\\
\leq & \left(230[k_1:\Q]4^{15}\left((2^{19}+4)m+2^{17}\log D+1.26\times10^6\right)\right)^{64}\\
& \cdot\left(\left(144.251m+36\log D+254.5+\log [k_1:\Q]\right)\right)^{64}\cdot  2^{28}v(E)^2v(E')^2,
\end{align*}
where again we write $m=\max(1,h(E),h(E'))$ and the last inequality uses the upper bound $h(B)\leq 2m+\tfrac{1}{2}\log D$. We consider both factors which are raised to the 64th power. By extracting a factor 3640 from the first one, we can bound both by the same expression. Doing so, we arrive at
\begin{displaymath}
D \leq \left(1.36\cdot 10^8[k:\Q]\left(525100m+2^{17}\log D+1.26\times 10^6+7280\log [k:\Q]\right)^2\right)^{64} \cdot  2^{29}v(E)^2
\end{displaymath}
and hence 
\begin{align*}
D \leq & 2^{157}v(E)^2(1.36\cdot 10^8[k:\Q])^{64}\\
& \cdot\left(525100m+4.41\cdot 10^8+7280\log [k:\Q]+2^{24}\left(\tfrac{1}{2}\log[k:\Q]+\tfrac{1}{64}\log(v(E))\right)\right)^{128}.
\end{align*}

If $E\neq E'$ holds and both $E$ and $E'$ have complex multiplication, then $[k_1:k]\leq 4$ and we have
\begin{align*}
D\leq & 32^4 \Lambda(\Hom_k(E\times E', Z(B)))^8v(E)^2v(E')^2 \\
\leq & 32^4[k_1:k]^8\Lambda(\Hom_{k_1}(E\times E', Z(B))^8v(E)^2v(E')^2\\
\leq & 4^{194}v(E)^2v(E')^2\Big(230[k_1:\Q](4m+2^{18}h(B)+1.26\cdot 10^6)\cdot\\
& \cdot(0.51m+136h(B)+\log [k_1:\Q]+434)\Big)^{32}\\
\leq & 4^{194}v(E)^2v(E')^2\left(1.78\cdot 10^6[k:\Q]\left(273m+68\log D+\log [k:\Q]+654\right)^2\right)^{32}.\\
\end{align*}
Hence we have
\begin{displaymath}
D\leq 4^{226}v(E)^2v(E')^2(1.78\cdot 10^6[k:\Q])^{32}(273m+2177\log [k:\Q]+136\log (v(E)v(E'))+8.68\cdot 10^4)^{64}.
\end{displaymath}
For these two cases, we conclude by Lemma \ref{lemma: 2.13} bounding $v(E)$ and $v(E')$ in terms of $m$.

If $E=E'$ holds, then so do $k_1=k$ and $v(E)=\sqrt{2}$. We have
\begin{align*}
D & \leq 32^4 \Lambda(\Hom_k(E, Z(B)))^8v(E)^8\\
& \leq 2^{20} \Lambda(\Hom_{k_1}(E,Z(B))^8v(E)^8\\
& \leq 4^{180}v(E)^8(230[k_1:\Q](4m+2^{18}h(B)+1.26\cdot 10^6)(0.51m+136h(B)+\log [k_1:\Q]+434))^{32}\\
& \leq 4^{182}(4.44\cdot 10^5[k:\Q](273m+68\log D+\log [k:\Q]+654)^2)^{32}.
\end{align*}
Hence we have
\begin{displaymath}
D \leq 4^{214}(4.44\cdot 10^5[k:\Q])^{32}(273m+2177\log [k:\Q]+8.26\cdot 10^4)^{64}.
\end{displaymath}
\end{proof}

\subsection{$k$-simple but not geometrically simple}

\begin{theorem}\label{AtoEC}
Let $A$ be a principally polarized abelian surface that is not geometrically simple. Then there is a field extension $k\subset k_1$ such that there exists a $k_1$-isogeny between $A$ and a product of two elliptic curves over $k$ of degree at most 
\begin{displaymath}
C_3(h(A),[k:\Q])=230^8\cdot 3^8\cdot 4^{167}\cdot 18^{64}\cdot [k:\Q]^8\left(\tfrac{17}{8}h(A)+\tfrac{5}{6} \log [k:\Q]+49.2\right)^{16}.
\end{displaymath}
\end{theorem}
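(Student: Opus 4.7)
The plan is to reduce to Theorem~\ref{FalEC} after a suitable finite base change. Since $A$ is not geometrically simple, there exist a finite extension $k_1/k$ and elliptic curves $E, E'$ defined over $k_1$ such that $A_{k_1}$ is isogenous over $k_1$ to $E\times E'$. I would take $k_1$ to be a field containing all the $\bar{k}$-endomorphisms of $A\times A$. This choice guarantees that the $\bar{k}$-elliptic factors $E, E'$ descend to $k_1$, and also that whenever $E$ and $E'$ are $\bar{k}$-isogenous they are already isogenous over $k_1$; this is exactly the hypothesis needed to apply Theorem~\ref{FalEC}. By Lemma~\ref{FofEnd} applied to $C=C'=A$, the extension $[k_1:k]$ admits a bound of the form $[k_1:k]\le 4\cdot 18^{8}$.

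Over $k_1$ the abelian surface $A_{k_1}$ still carries the principal polarization coming from the theta divisor, and the stable Faltings height satisfies $h(A_{k_1})=h(A)$. Theorem~\ref{FalEC} then applies to $A_{k_1}$ with base field $k_1$ in place of $k$, producing a $k_1$-isogeny between $A_{k_1}$ and $E\times E'$ of degree at most
\[
C_1\bigl(h(A),[k_1:\Q]\bigr)=230^8\cdot 3^8\cdot 4^{159}\cdot[k_1:\Q]^8\left(\tfrac{17}{8}h(A)+\tfrac{5}{6}\log[k_1:\Q]+30.8\right)^{16}.
\]

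The last step is to repackage the bound in terms of $[k:\Q]$. Using $[k_1:\Q]\le 4\cdot 18^{8}[k:\Q]$, I get $[k_1:\Q]^{8}\le 4^{8}\cdot 18^{64}[k:\Q]^{8}$, which accounts for the extra factor $4^{8}\cdot 18^{64}$ appearing in $C_3$ relative to $C_1$; absorbing $\tfrac{5}{6}\log(4\cdot 18^{8})$ into the additive constant inside the sixteenth power raises $30.8$ to the stated value $49.2$ (up to a slightly conservative rounding of the numerical constants).

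The main (modest) obstacle is the first step: verifying that a single extension $k_1$ of degree controlled solely by $\dim A$ simultaneously realizes the product decomposition of $A$, the descent of the elliptic factors $E, E'$, the non-isogeny hypothesis of Theorem~\ref{FalEC}, and preserves the principal polarization. Once this has been set up using Lemma~\ref{FofEnd}, the rest is routine bookkeeping that relies on invariance of the stable Faltings height under base change and on the fact that the degree of a $k_1$-isogeny constructed over $k_1$ is independent of where its domain and target are defined.
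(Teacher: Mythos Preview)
Your proposal is correct and follows essentially the same route as the paper: take $k_1$ to be a field over which all geometric endomorphisms of $A$ are defined, bound $[k_1:k]\le 4\cdot 18^8$ via Lemma~\ref{FofEnd}, and then apply Theorem~\ref{FalEC} over $k_1$, absorbing the extra factor $[k_1:k]^8\le 4^8\cdot 18^{64}$ and $\tfrac{5}{6}\log(4\cdot 18^8)$ into the constants. Your added remarks on why the hypotheses of Theorem~\ref{FalEC} are met over $k_1$ (descent of the elliptic factors and the isogeny assumption) are a welcome clarification; the slight numerical mismatch you flag in the additive constant is indeed just rounding and does not affect the argument.
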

\begin{proof}
Let $k_1$ be the field where all the endomorphisms of $A$ are defined. Then there exist elliptic curves $E$ and $E'$ such that $A$ is isogenous to $E\times E'$ over $k_1$. The assertion follows from Theorem \ref{FalEC} if we have $[k_1:k]\leq 4\cdot 18^8$. This follows from applying Lemma \ref{FofEnd} to $A$.
\end{proof}

\subsection{related results for elliptic curves}
In this subsection, we discuss variants of the effective Faltings' theorem for elliptic curves. All the results are special cases of the main theorems of \cite{GR14} with possibly better bounds. This completes the results from Theorem \ref{FalECnp} by adding the case where $A$ is geometrically isogenous to a product of two equal elliptic curves having complex multiplication.
\begin{theorem}[\cite{GR14}*{Prop.~10.1}]\label{CMECmax}
\label{theorem: isogenyI}
Let $E$ be an elliptic curve over $k_1$, that when base changed to $\ol k$ has complex multiplication by $K$. Then there is a finite field extension $k_1\subset k_2$ and an elliptic curve $E''$ over $k_2$, isogenous to $E_{k_2}$ satisfying:
\begin{enumerate}
\item
$\End_{k_2}(E'')=\OO_K$;
\item there exists an isogeny $\phi$ over $k_2$ between $E$ and $E''$ with
\begin{displaymath}
\deg \phi \leq 30[k_1:\Q] \max\left(1,h(E)+\tfrac{1}{2} \log [k:\Q]\right);
\end{displaymath}
\item $[k_2:k_1]\leq 2 (\deg \phi)^2.$
\end{enumerate}
\end{theorem}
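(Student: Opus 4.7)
The proof specialises \cite{GR14}*{Prop.~10.1} to a single CM elliptic curve, so I would follow the same three-step template, taking advantage of dimension one and the rigidity supplied by CM to obtain sharper constants.

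First, since $E_{\ol k}$ has CM by $K$, all its geometric endomorphisms are defined over the compositum $k_1' := k_1 K$, and $[k_1':k_1] \leq 2$. Over $k_1'$, the ring $\OO := \End_{k_1'}(E)$ is an order in $\OO_K$ of some conductor $f \geq 1$. Analytically, writing $E(\C) = \C/\fraka$ for a proper $\OO$-ideal $\fraka$, the inclusion $\fraka \hookrightarrow \OO_K \cdot \fraka$ yields a canonical isogeny $\phi : E \to E''$ with $E''(\C) = \C/(\OO_K \fraka)$, satisfying $\End(E'') = \OO_K$ and $\deg\phi = [\OO_K:\OO] = f$. The kernel $\ker\phi \cong \OO_K/\OO$ is cyclic of order $f$; as a subgroup of $E[f]$, its Galois orbit over $k_1'$ has size at most the number of cyclic order-$f$ subgroups in $E[f]$, which is bounded by $f^2$. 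Hence $\phi$ and $E''$ descend to some $k_2 \supset k_1'$ with $[k_2:k_1'] \leq f^2 = (\deg\phi)^2$, so $[k_2:k_1] \leq 2(\deg\phi)^2$, yielding claim (3).

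The main quantitative step is bounding $f$. I would apply the sharp effective isogeny theorem for elliptic curves (in the Gaudron--R\'emond form used also in the proof of Lemma \ref{lemma: 2.13}) over $k_1'$ to produce a non-integer endomorphism $\alpha \in \OO \setminus \Z$ whose degree is bounded by $C[k_1':\Q]^2 \max(1, h(E) + \tfrac{1}{2}\log[k_1':\Q])^2$ for an explicit constant $C$. Writing $\alpha = a + b\omega$ in a $\Z$-basis $\{1,\omega\}$ of $\OO_K$ and subtracting an integer from $\alpha$ to arrange $|a| \leq |b|/2$, the inclusion $\Z[b\omega] \subseteq \OO = \Z + f\OO_K$ forces $f \mid b$, while
\begin{displaymath}
\deg\alpha \;=\; N_{K/\Q}(\alpha) \;\geq\; \tfrac{b^2 |d_K|}{4} \;\geq\; \tfrac{f^2}{4}.
\end{displaymath}
Taking square roots converts the quadratic bound on $\deg\alpha$ into the linear-in-$h(E)$ bound on $f$ claimed in (2); absorbing the factor $[k_1':k_1] \leq 2$ into the constants then concludes (1) and (2).

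The main obstacle is purely quantitative: keeping the constants explicit and small enough to reach the clean value $30[k_1:\Q]$ requires a direct CM-aware invocation of Autissier's theorem of periods, in the style of \cite{GR14}*{Sec.~10}, rather than plugging $E$ into a black-box Masser--W\"ustholz isogeny estimate; the latter would introduce extra numerical factors that would spoil the final constant. Everything else in the argument is formal descent and elementary order theory in $\OO_K$.
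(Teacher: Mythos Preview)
The paper does not supply its own proof of this statement; it is quoted directly from \cite{GR14}*{Prop.~10.1}, and the only related argument reproduced in the paper is the fragment in Lemma~\ref{lemma: 2.13}, where the small endomorphism $\psi\in\End_k(E)$ of degree at most $668[k:\Q]^2\max(1,h(E)+\tfrac12\log[k:\Q])^2$ is extracted from the same Gaudron--R\'emond proof. Your proposal is a correct reconstruction of precisely that argument: produce a non-integer $\alpha\in\OO$ of small norm via the period theorem, read off $f\mid b$ and $N_{K/\Q}(\alpha)\gg f^2$ to bound the conductor, take $E''=E/(\OO_K\fraka/\fraka)$, and descend over a field of degree at most $2f^2$. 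Nothing is missing, and the approach is the same as the cited one.
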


\begin{cor}\label{rk4deg}
Let $A$ be an abelian surface with N\'eron--Severi rank $4$. Then there is a field extension $k\subset k_2$ and an elliptic curve $E''$ over $k_2$ such that there is an isogeny between $A$ and $E''\times E''$ of degree bounded from above by 
\begin{displaymath}
C_4(h(A),[k:\Q])=225\cdot 4^2\cdot 18^{16}C_3[k:\Q]^2\left(h(A)+\frac{\log C_3}2+\log [k:\Q]+25\right)^2,
\end{displaymath}
and the degree of field extension $[k_2:k]$ is bounded by
\begin{displaymath}
C_5(h(A),[k:\Q])=1800\cdot 4^2\cdot 18^{24}[k:\Q]^2\left(h(A)+\frac{\log C_3}2+\log [k:\Q]+25\right)^2,
\end{displaymath}
where $C_3$ is the constant depending on $h(A)$ and $[k:\Q]$ from Theorem \ref{AtoEC}.
\end{cor}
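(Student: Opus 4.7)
The strategy is to chain Theorem~\ref{AtoEC} with (two applications of) Theorem~\ref{CMECmax}. Since the geometric N\'eron--Severi rank of $A$ equals $4$, the standard classification of endomorphism algebras of abelian surfaces forces $A$ to be geometrically isogenous to $E \times E$ for some CM elliptic curve $E$; in particular $A$ is not geometrically simple. Theorem~\ref{AtoEC} therefore produces a field extension $k \subset k_1$ with $[k_1:k] \leq 4 \cdot 18^8$ and a $k_1$-isogeny $A \to E_1 \times E_2$ of degree at most $C_3$ for some elliptic curves $E_1, E_2$ over $k_1$. Because all geometric endomorphisms of $A$ are defined over $k_1$, the curves $E_1$ and $E_2$ are $k_1$-isogenous and both have CM by the same imaginary quadratic field $K$.

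Applying Theorem~\ref{CMECmax} to $E_1$ over $k_1$ yields a further extension $k_1 \subset k_2$ with $[k_2:k_1] \leq 2(\deg \phi)^2$, an elliptic curve $E''/k_2$ with $\End_{k_2}(E'') = \mathcal{O}_K$, and an isogeny $\phi : E_1 \to E''$ whose degree is bounded in terms of $h(E_1)$ and $[k_1:\Q]$. The $k_1$-isogeny $A \to E_1 \times E_2$ of degree $\leq C_3$, together with the standard inequality relating Faltings heights across an isogeny and the universal lower bound for Faltings heights of elliptic curves, gives $h(E_1) \leq h(A) + \tfrac12 \log C_3 + O(1)$. Combined with $[k_1:\Q] \leq 4 \cdot 18^8 [k:\Q]$, this produces an estimate for $(\deg \phi)^2$ of exactly the form of the last factor appearing in $C_4$ and $C_5$.

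To replace $E_2$ by $E''$ in the target, apply Theorem~\ref{CMECmax} a second time to $E_2$, obtaining an $\mathcal{O}_K$-CM elliptic curve $E'''$ over (an enlargement of) $k_2$ isogenous to $E_2$ of comparable bounded degree. Since $E''$ and $E'''$ are both $\mathcal{O}_K$-CM and geometrically isogenous, they differ by an ideal class of $\mathcal{O}_K$, and Minkowski's bound on the minimum norm in each ideal class, the discriminant $|\Disc K|$ being controlled by $h(E'')$, produces a $k_2$-isogeny $E''' \to E''$ of bounded degree. Composing
\[
A \longrightarrow E_1 \times E_2 \longrightarrow E'' \times E''' \longrightarrow E'' \times E''
\]
gives the required $k_2$-isogeny, and multiplying the degree bounds and the field-extension bounds $[k_1:k]\cdot [k_2:k_1]$ coming from the previous steps yields the stated constants $C_4$ and $C_5$.

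The main obstacle is the last step: bounding the minimum degree of an isogeny between two $\mathcal{O}_K$-CM elliptic curves with the same full endomorphism ring so that the dependence on $h(A)$ and $[k:\Q]$ remains polynomial and compatible with the exact form of $C_4$ and $C_5$. Once this is handled, the remaining work is a careful but routine consolidation of the constants coming from Theorems~\ref{AtoEC} and~\ref{CMECmax}.
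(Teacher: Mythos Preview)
Your overall strategy of chaining Theorem~\ref{AtoEC} with Theorem~\ref{CMECmax} is correct, but you have introduced an unnecessary complication that the paper avoids entirely. The ``main obstacle'' you identify---bounding an isogeny between two distinct $\OO_K$-CM curves $E''$ and $E'''$---simply does not arise in the paper's argument.

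The key point you are missing is that in the rank~$4$ case one may arrange from the outset that $E_1=E_2$. Indeed, when $\rk\NS(\ol A)=4$ the abelian surface $A$ is geometrically isogenous to $E\times E$ for a single CM elliptic curve $E$; and Theorem~\ref{AtoEC} (via Theorem~\ref{FalEC}, which carries no restriction on $E,E'$) then furnishes a $k_1$-isogeny $A\to E\times E$ of degree at most $C_3$, with $[k_1:k]\leq 4\cdot 18^8$. One now applies Theorem~\ref{CMECmax} \emph{once}, to this single $E$, obtaining $\phi:E\to E''$ over $k_2$ with $\End_{k_2}(E'')=\OO_K$. The composite
\[
A \longrightarrow E\times E \xrightarrow{\ \phi\times\phi\ } E''\times E''
\]
has degree at most $C_3\cdot(\deg\phi)^2$, and $[k_2:k]=[k_2:k_1][k_1:k]\leq 2(\deg\phi)^2\cdot 4\cdot 18^8$. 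Plugging in the bound $\deg\phi\leq 30[k_1:\Q]\max(1,h(E)+\tfrac12\log[k_1:\Q])$ together with $2h(E)\leq h(A)+\tfrac12\log C_3$ yields exactly $C_4$ and $C_5$.

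Your route---two applications of Theorem~\ref{CMECmax} followed by a Minkowski-type bound to pass from $E'''$ to $E''$---could in principle be carried out, but it requires controlling a further field extension (the compositum of the two $k_2$'s), making the Minkowski step explicit in terms of $h(A)$, and would in any case produce constants strictly larger than the stated $C_4,C_5$. Recognising that $E_1=E_2$ can be assumed eliminates all of this.
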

\begin{proof}
Combining Theorems \ref{AtoEC} and \ref{CMECmax} and noticing $2h(E)\leq h(A)+\frac 12 \log C_3$ (and the right hand side is always greater than $2$), we conclude that the degree of the isogeny is bounded by 
\begin{align*}
C_4 &:= C_3\cdot\left(30[k_1:\Q]\max\left(1,h(E)+\tfrac{1}{2}\log [k_1:\Q]\right)\right)^2\\
& \leq 900\cdot 4^2\cdot 18^{16}C_3[k:\Q]^2\left(\frac{h(A)+(\log C_3)/2}2+\tfrac{1}{2} (\log [k:\Q]+25)\right)^2\\
&=225\cdot 4^2\cdot 18^{16}C_3[k:\Q]^2\left(h(A)+\frac{\log C_3}2+\log [k:\Q]+25\right)^2.
\end{align*}
Furthermore, we have
\begin{align*}
[k_2:k] & =[k_2:k_1][k_1:k]\leq 8\cdot 18^8\left(30[k_1:\Q]\max(1,h(E)+\tfrac{1}{2}\log [k_1:\Q])\right)^2\\
& \leq 1800\cdot 4^2\cdot 18^{24}[k:\Q]^2\left(h(A)+\frac{\log C_3}2+\log [k:\Q]+25\right)^2.
\end{align*}
\end{proof}

\begin{theorem}\label{effFalEC}
Let $E$ and $E'$ be elliptic curves over a number field $k$.
For any positive integer $m$, let $M_m$ be the smallest positive integer that kills the cokernel of the map $\Hom_k(E,E')\rightarrow \Hom_\Gamma(E_m,E'_m)$.
Then there exists an explicitly computable upper bound on $M_m$ depending only on $h(E)$, $h(E')$, and $[k:\Q]$. Moreover, $C_2(h(E),h(E'), [k:\Q])$ from Theorem \ref{FalECnp} suffices.
\end{theorem}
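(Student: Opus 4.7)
The plan is to adapt the Masser--W\"ustholz graph-and-quotient strategy (as used in Lemma \ref{22} and the proof of Theorem \ref{effFal}) to the setting of two possibly distinct elliptic curves, by taking $B = E \times E'$ with the product polarization and using Theorem \ref{FalECnp} as the isogeny input. Write $p_1\colon B \to E$, $p_2\colon B \to E'$ for the projections and $i_1, i_2$ for the canonical inclusions, so that $\End_k(B)$ decomposes into the four summands $\End_k(E)$, $\Hom_k(E', E)$, $\Hom_k(E, E')$, $\End_k(E')$.

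First, given a Galois-equivariant $\phi \in \Hom_\Gamma(E_m, E'_m)$, I would form the graph
\[
\Gamma_\phi = \{(x, \phi(x)) : x \in E_m\} \subset E_m \times E'_m \subset B,
\]
which is a $\Gamma$-stable finite subgroup scheme of $B$ defined over $k$ of order $m^2$. Set $B' := B/\Gamma_\phi$, an abelian surface over $k$ with the induced polarization and with a degree-$m^2$ isogeny $\pi\colon B \to B'$. Since $B'$ is a polarized abelian surface over $k$ isogenous to $E \times E'$, Theorem \ref{FalECnp} supplies an isogeny $\psi\colon B' \to B$ defined over $k$ with $\deg \psi \le C_2 := C_2(h(E), h(E'), [k:\Q])$ in each of the three cases considered there.

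Next I would extract a lift of $\phi$ from the composition $f := \psi \circ \pi \in \End_k(B)$. Decomposing $f$ via the projections and inclusions, the $(2,1)$-entry $c := p_2 \circ f \circ i_1 \in \Hom_k(E, E')$ is the natural candidate. By construction $f$ annihilates $\Gamma_\phi$, so restricting to $m$-torsion and using that $\pi$ factors through the quotient by $\Gamma_\phi$, a direct diagram chase shows that $c|_{E_m} = N \cdot \phi$ in $\Hom_\Gamma(E_m, E'_m)$ for a nonzero integer $N$ bounded by $\deg \psi \le C_2$. This yields the "moreover" clause $M_m \le C_2$, independent of $m$.

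The only case not covered by Theorem \ref{FalECnp} is $E = E'$ with complex multiplication. Here I would reduce to a controlled situation by invoking Theorem \ref{CMECmax}: replace $E$ by a $k_2$-isogenous curve $E''$ with $\End_{k_2}(E'') = \OO_K$, for which $\End_\Gamma(E''_m)$ can be described explicitly in terms of the CM order, then descend the resulting bound through the explicit control on the degree of the isogeny and on $[k_2:k]$ provided by Theorem \ref{CMECmax} (and, if needed, Corollary \ref{rk4deg} for the ambient abelian surface). I expect the principal technical obstacle to be Step~3: carefully verifying on $m$-torsion that $\psi \circ \pi$ produces a multiple of $\phi$ with a \emph{controlled} scalar, since the bookkeeping with the graph of $\phi$ inside the non-square product $E \times E'$ is a bit more delicate than in the Masser--W\"ustholz $A \times A$ setting, and one must keep track of the Rosati involution with respect to the product polarization when identifying $\Hom_k(E, E')$ inside $\End_k(B)$.
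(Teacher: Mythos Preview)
Your graph-and-quotient strategy is exactly the approach the paper takes, and the input Theorem~\ref{FalECnp} is the right one. However, your execution of the key step differs from the paper's in a way worth noting, and the claim $c|_{E_m} = N\cdot\phi$ with $N\le\deg\psi$ is not quite what a diagram chase gives you.

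Writing $f=\psi\circ\pi$ in block form $\begin{pmatrix} a & b \\ c & d\end{pmatrix}$, the condition that $f$ kills $\Gamma_\phi$ yields $c(x)+d(\phi(x))=0$ for $x\in E_m$, i.e.\ $c|_{E_m}=-(d|_{E'_m})\circ\phi$ with $d\in\End_k(E')$, not $N\phi$ for a controlled integer $N$. The paper sidesteps this by splitting into two cases. When $E$ and $E'$ are isogenous without CM, one has $i(E)=1$ (since $\End_k(E)=\Z$), so Lemma~\ref{22} applies verbatim with $A=E$, $B=E\times E'$ and gives $M_m\le b(B)\le C_2$. When $E$ is not isogenous to $E'$, there are no cross-homomorphisms, so $\chi=(\alpha,\beta)$ is diagonal; then the Masser--W\"ustholz sandwich $b\ker\chi\subset G\subset\ker\chi$ (with $b=\deg\psi$) yields directly that $b\phi(y)=0$ for every $y\in E_m$: indeed $(y,\phi(y))\in G\subset\ker\chi$ gives $\beta(\phi(y))=0$, and then $(0,b\phi(y))\in b\ker\chi\subset G=\Gamma_\phi$ forces $b\phi(y)=\phi(0)=0$. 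Since $\Hom_k(E,E')=0$ here, the cokernel is all of $\Hom_\Gamma(E_m,E'_m)$, which is thus killed by $b\le C_2$.

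Finally, the case $E=E'$ with CM is not treated in the paper's proof (nor is $C_2$ defined there by Theorem~\ref{FalECnp}); the statement is only used later under the hypothesis ``$E\neq E'$ or $\End_{\bar k}E=\Z$'' (Corollary~\ref{BrEEp}), so your proposed detour through Theorem~\ref{CMECmax} is extra and unnecessary for the paper's purposes.
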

\begin{proof}
When $E$ and $E'$ are isogenous and without complex multiplication, we have $i(E)=1$ and we arrive at the desired statement by applying Lemma \ref{22} and Theorem \ref{FalECnp}.

When $E$ is not isogenous to $E'$, we prove a variant of Lemma \ref{22}. Let $f$ be an element of $\Hom_\Gamma(E_m,E'_m)$ and let $G\subset E_m\times E'_m$ be its graph. If we write $B$ for the quotient of $E\times E'$ by $G$, then $B$ is defined over $k$. By Theorem \ref{FalECnp}, there exists an isogeny $B\rightarrow E\times E'$ of some degree $b\leq C_2(h(E),h(E'), [k:\Q])$.

Consider the composite map $\chi:E\times E'\rightarrow (E\times E')/G=B\rightarrow E\times E'$. By the proof of \cite{MW95}*{Lem.~3.1} we have $b\ker \chi\subset G\subset \ker \chi$. We conclude the proof by proving that the map $f$ is killed by $b$. Since $E$ and $E'$ are not isogenous, we may write $\chi$ as $(\alpha, \beta)$ where $\alpha\in \End(E)$ and $\beta\in \End(E')$. Given any $y\in E_m$, we only need to prove $bf(y)=0\in E'_m$. By definition, $(y,f(y))\in G\subset \ker \chi$ and hence $\beta f(y)=0$. On the other hand, $(0,bf(y))\in b(\ker \alpha \times \ker \beta)= b \ker \chi \subset G$ and since $G$ is the graph of $f$, this implies $bf(y)=f(0)=0$. Since $f$ is arbitrary, we conclude that the cokernel of the map $\Hom_k(E,E')\rightarrow \Hom_\Gamma(E_m,E'_m)$ is killed by some $b\leq C_2(h(E),h(E'), [k:\Q])$.
\end{proof}

\section{Effective computations of the N\'eron--Severi lattice as a Galois module}\label{Alg_part1}

Our goal of this section is to prove the following theorem:
\begin{theorem}
There is an explicit algorithm that takes input a smooth projective curve $C$of genus $2$ defined over a number field $k$, and outputs a bound of the algebraic Brauer group $\Br_1(X)/\Br_0(X)$ where $X$ is the Kummer surface associated to the Jacobian $\Jac(C)$.
\end{theorem}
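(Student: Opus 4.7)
The strategy is to reduce the problem to a finite group cohomology computation. Since $X$ is a K3 surface, the identification recalled in the introduction,
$$\Br_1(X)/\Br_0(X)\cong \HH^1(\Gamma_k,\Pic(\ol X))=\HH^1(\Gamma_k,\NS(\ol X)),$$
shows that it suffices to describe the lattice $\NS(\ol X)$ together with its Galois action effectively. This action factors through a finite quotient $\Gal(L/k)$ for some finite Galois extension $L/k$; once $\NS(\ol X)$, $L$, and the $\Gal(L/k)$-action are known explicitly, $\HH^1(\Gal(L/k),\NS(\ol X))$ is computable by standard means (for instance in \textsc{Magma}, as in Section~\ref{computations}).

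For the Kummer surface $X=\Kum(A)$ associated to $A=\Jac(C)$, the lattice $\NS(\ol X)$ admits a classical description as the saturation, inside $\NS(\ol X)\otimes\Q$, of $\pi^*\NS(\ol A)\oplus K$, where $K$ is the rank-$16$ Kummer sublattice spanned by the exceptional curves over the sixteen nodes of $A/\{\pm 1\}$ and $\pi$ is the projection from the resolution. The action of $\Gamma_k$ on $K$ is the permutation action on $A[2]$, which is read off directly from the defining equation of $C$ (equivalently, from the splitting field of the polynomial cutting out the Weierstrass points). Hence the central task reduces to producing $\NS(\ol A)$ as a Galois module, together with a finite extension $L/k$ over which the actions on both $\NS(\ol A)$ and $A[2]$ become trivial.

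To compute $\NS(\ol A)$ effectively, I would first pin down the geometric endomorphism algebra $\End_{\ol k}(A)$, which determines the rank $\rho(\ol A)\in\{1,2,3,4\}$ and the lattice structure of $\NS(\ol A)$ (as essentially the Rosati-fixed part of $\End(\ol A)\otimes\Q$, together with the principal polarization). Upper bounds on the geometric rank come from counting points $A(\F_q)$ at good primes and extracting Frobenius-eigenvalue data; lower bounds come from an explicit search for endomorphisms and isogenies. The effective Faltings theorems of Section~\ref{Effective_Faltings} are precisely what tell us when the search may be halted: Theorem~\ref{effFal} handles the geometrically simple case, Theorem~\ref{AtoEC} together with Corollary~\ref{rk4deg} handle the split cases, and Lemma~\ref{FofEnd} bounds the degree of the field of definition of all $\ol k$-endomorphisms. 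From this data one recovers $\NS(\ol A)$ as a Galois lattice over a field $L$ of explicitly bounded degree, and combining with the permutation action on $A[2]$ produces $\NS(\ol X)$ as a $\Gal(L/k)$-module. A final $\HH^1$ computation yields the bound.

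The hard part will be the geometrically non-simple case, and especially the rank-$4$ situation, where one must exhibit an explicit factorization $\ol A\sim E\times E$ with $E$ a CM elliptic curve over the bounded extension $k_2/k$ of Corollary~\ref{rk4deg} and then carefully descend both the endomorphism structure and the polarization data to the correct Galois module over $k$; the bounds here blow up quickly but remain explicit. In the geometrically simple rank-$1$ case --- the one relevant to the introductory example and to Section~\ref{section:example} --- the picture collapses dramatically: $\NS(\ol A)=\Z$ with trivial Galois action, so the entire computation of $\NS(\ol X)$ is driven by the Galois action on $A[2]$, i.e.\ by the splitting field of the defining sextic, and the final group cohomology calculation is essentially immediate.
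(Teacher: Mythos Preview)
Your overall strategy is correct and matches the paper's: reduce to computing $\HH^1(\Gamma_k,\NS(\ol X))$, build $\NS(\ol X)$ from (the pushforward of) $\NS(\ol A)$ together with the Kummer lattice $K$, read the Galois action on $K$ off the action on $A[2]$, and finish with finite group cohomology. The case split according to $\rho(\ol A)\in\{1,2,3,4\}$ and your description of the rank-$1$ case are also essentially how the paper proceeds.

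Where your proposal diverges, however, is in the termination argument for computing $\NS(\ol A)$, and there is a genuine gap. You assert that the effective Faltings results of Section~\ref{Effective_Faltings} ``are precisely what tell us when the search may be halted,'' but in fact the paper does not use Section~\ref{Effective_Faltings} for this theorem at all; those bounds are deployed only later, in Section~\ref{Trans_part}, for the transcendental quotient. Theorem~\ref{effFal} bounds the cokernel of $\End_k(A)\to\End_\Gamma(A_m)$; it does not certify that a search for endomorphisms of $A$ has exhausted $\End_{\ol k}(A)$, nor does it decide whether $A$ is geometrically simple. Theorem~\ref{AtoEC} and Corollary~\ref{rk4deg} bound degrees of isogenies to products of elliptic curves \emph{assuming such an isogeny exists}; they do not tell you when to stop looking if none has been found. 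Concretely, by Theorem~\ref{32}, when $\rho(\ol A)$ is odd the reduction ranks $r_\frakp$ are strictly larger than $\rho(\ol A)$ at every good prime, so your proposed upper bound from point counting never meets the lower bound from the search.

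The paper's halting criterion is instead the algorithm of Charles~\cite{FC} described in Section~\ref{algorithmsNSrank}: one runs Hilbert-scheme searches for divisors (lower bound) in parallel with point counting and the Artin--Tate discriminant computation (upper bound), and in the odd-rank case one certifies $\rho=r_\frakp-1$ by finding two primes $\frakp,\frakq$ with $r_\frakp=r_\frakq$ but distinct square classes $\delta(\frakp)\neq\delta(\frakq)$. Termination is guaranteed by~\cite{FC}, not by effective Faltings. Once the rank is known, the lattice $\NS(\ol A)$ and its Galois structure are pinned down by further Hilbert-scheme computations and the overlattice argument of Lemma~\ref{lemma: overlattice}, again without appeal to Section~\ref{Effective_Faltings}. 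Your proposal would be repaired by substituting this ingredient for the effective Faltings input.
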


A general algorithm to compute N\'eron--Severi groups for arbitrary projective varieties is developed in \cite{PTV}, so here we consider algorithms specialized to the Kummer surface $X$ associated to a principally polarized abelian surface $A$.

\subsection{The determination of the N\'eron--Severi rank of $A$}
\begin{theorem}\label{32}
The following is a complete list of possibilities for the rank $r$ of $\NS(\overline{A})$. For any prime $\frakp$ we denote by $r_\frakp$ the reduction of $r$ modulo $\frakp$.
\begin{enumerate}
\item When $A$ is geometrically simple, we consider $D=\End_{\bar{k}}(A)\otimes \Q$, which has the following possibilities:
\begin{enumerate}
\item $D=\Q$ and $r=1$. There exists a density one set of primes $\frakp$ with $r_{\frakp}=2$.
\item $D$ is a totally real quadratic field. Then $r=2$ and there exists a density one set of primes $\frakp$ with $r_{\frakp}=2$.
\item $D$ is a indefinite quaternion algebra over $\Q$. Then $r=3$ and there exists a density one set of primes $\frakp$ with $r_{\frakp}=4$. 
\item $D$ is a degree $4$ CM field. Then $r=2$ and there exists a density one set of primes $\frakp$ with $r_{\frakp}=2$. In fact this holds for the set of $\frakp$'s such that $A$ has ordinary reduction at $\frakp$.
\end{enumerate}

\item When $A$ is isogenous over $\bar{k}$ to $E_1\times E_2$ for two elliptic curves. Then

\begin{enumerate}
\item if $E_1$ is isogenous to $E_2$ and CM, then $r=4$ and $r_{\frakp}=4$ for all ordinary reduction places.
\item if $E_1$ is isogenous to $E_2$ but not CM, then $r=3$ and $r_{\frakp}=4$ for all ordinary reduction places.
\item if $E_1$ is not isogenous to $E_2$, then $r=2$ and there exists a density one set of primes $\frakp$ such that $r_{\frakp}=2$.
\end{enumerate}

\end{enumerate}
Notice that for all the above statements, by an abuse of language, being density one means there exists a finite extension of $k$ such that the primes are of density one with respect to this finite extension.
\end{theorem}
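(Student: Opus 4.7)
The plan is to exploit the standard identification
\begin{displaymath}
\NS(\overline{A}) \otimes \Q \;\cong\; \{\phi \in \End^0_{\bar k}(A) : \phi^\dagger = \phi\},
\end{displaymath}
where $\dagger$ is the Rosati involution attached to the principal polarization. This reduces the computation of $r$ to the structure of $D = \End^0_{\bar k}(A)$ together with the action of $\dagger$ on $D$.

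For the geometrically simple cases in part (1), I would invoke Albert's classification of endomorphism algebras of simple abelian varieties restricted to $\dim A = 2$: Type~I gives $D = \Q$ or $D$ a real quadratic field $F$ (both totally real, so $\dagger$ acts trivially, giving symmetric dimension $1$ or $2$); Type~II gives an indefinite quaternion algebra over $\Q$ (where $\dagger$ is an involution of the first kind of orthogonal type, symmetric dimension $3$); and Type~IV gives a degree $4$ CM field (where $\dagger$ is complex conjugation, fixing the totally real quadratic subfield, symmetric dimension $2$). Type~III (definite quaternion) is excluded in dimension $2$ by Shimura's constraints relating $\dim A$ to the reduced degree of $D$. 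For part (2), I would use the formula
\begin{displaymath}
\rho(E_1 \times E_2) = \rho(E_1) + \rho(E_2) + \rk \Hom_{\bar k}(E_1,E_2) = 2 + \rk \Hom_{\bar k}(E_1,E_2),
\end{displaymath}
which gives $r = 2, 3, 4$ according as $\Hom_{\bar k}(E_1,E_2)$ has rank $0$, $1$, or $2$ (corresponding to not isogenous, isogenous without CM, isogenous with CM).

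For the statements about reduction modulo $\frakp$, the plan combines three ingredients: (i) Tate's theorem for divisors on abelian varieties over finite fields, which identifies $\rho(A_\frakp)$ with the dimension of the Rosati-symmetric part of $\End^0(A_\frakp) \otimes \Q$; (ii) the Honda--Tate classification of isogeny classes over finite fields through the Frobenius characteristic polynomial; and (iii) Chebotarev applied to the $\ell$-adic Galois representation on $T_\ell A$, whose image is open in the Mumford--Tate group by Faltings together with the Mumford--Tate conjecture (known for abelian surfaces). In the generic cases (1)(a), (1)(b), (1)(d), and (2)(c), a density one set of primes yields an ordinary reduction whose Frobenius characteristic polynomial has the expected shape, from which $\End^0(A_\frakp)$ and hence $r_\frakp$ can be read off directly; the jump from $r = 1$ to $r_\frakp = 2$ in case (1)(a) reflects the fact that an ordinary geometrically simple abelian surface over $\Fbar_p$ always has CM by a degree $4$ field. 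In the jumping cases (1)(c), (2)(a), and (2)(b), either the quaternion algebra splits or the elliptic factors acquire extra homomorphisms at a density one set of primes, so $A_\frakp$ becomes isogenous over $\Fbar_p$ to a product $E \times E$ with $E$ an ordinary CM elliptic curve; since then $\End^0(E) = K$ is an imaginary quadratic field, this forces $r_\frakp = 4$.

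The main obstacle will be making the density one statements fully precise: in each case one must determine the Mumford--Tate group of $A$, confirm that the $\ell$-adic Galois image becomes open in it after a finite base change, and then apply Chebotarev to isolate Frobenius conjugacy classes with the desired characteristic polynomial. The ordinary reduction hypothesis required in several cases is harmless because the set of ordinary primes has density one for abelian surfaces of the relevant endomorphism types by classical results of Ogus, Noot, and Serre.
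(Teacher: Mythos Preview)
Your computation of $r$ via the Rosati-symmetric part of $D$ and Albert's classification matches the paper's approach exactly; the paper cites Mumford's book for the same identification and case division.

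Where you diverge is in the density statements for $r_\frakp$. You propose to compute the Mumford--Tate group in each case, invoke openness of the $\ell$-adic image (Mumford--Tate conjecture for abelian surfaces), and then run Chebotarev plus Honda--Tate to control the Frobenius characteristic polynomial and hence $\End^0(A_\frakp)$. The paper instead appeals directly to Charles' theorem \cite{FC}, which packages precisely this Hodge-theoretic input: for the $r=2$ cases (1)(b), (1)(d), (2)(c), the paper argues by contradiction that if $r_\frakp=4$ held on a density one set, then Charles' result would force the endomorphism algebra of the transcendental lattice $T$ to be a real quadratic field with $\dim_E T=2$, violating the hypothesis in the second part of that theorem. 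For case (1)(a), the paper again uses \cite{FC} to reduce to a generic abelian surface, where the ordinary reduction is simple CM.

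Both routes are correct. Your approach is more hands-on and self-contained, but it obliges you to identify the Mumford--Tate group and verify the ``generic Frobenius shape'' separately in each of the five relevant cases, including the slightly delicate point that for $\mathrm{GSp}_4$-image the Frobenius polynomial is irreducible on a density one set (a measure-zero argument in the $\ell$-adic group). The paper's route via \cite{FC} is shorter to write and handles all the $r=2$ cases uniformly, at the cost of treating Charles' theorem as a black box. Either is acceptable; if you keep your version, the main thing to tighten is the assertion that the reduction stays geometrically simple (equivalently $r_\frakp=2$ rather than $4$) in cases (1)(b), (1)(d), (2)(c), which is where most of the content lies.
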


 \begin{proof}
 We apply \cite{MumAV}*{p.~201 Thm. 2 and p.208} (and the remark on p. 203 referring to the work of Shimura) to obtain the list of the rank $r$. When $A$ is geometrically simple, we can only have $A$ of type I, II, and IV (in the sense of the Albert's classification). In the case of Type I, the totally real field may be $\Q$ or quadratic. In this case, the Rosati involution is trivial. This gives case (1)-(a,b). By \cite{MumAV}*{p.~196}, the Rosati involution of Type II is the transpose and its invariants are symmetric 2-by-2 matrices, which proves case (1)-(c). In the case of Type IV, $D$ is a degree $4$ CM field. In this case, the Rosati involution is the complex conjugation and this gives case (1)-(d). When $A$ is not geometrically simple, then $A$ is isogenous to the product of two elliptic curves and all these cases are easy.
 
Notice that after a suitable field extension, there exists a density one set of primes such that $A$ has ordinary reduction (due to Katz, see \cite{Ogus} Sec. 2). We first pass to such an extension and only focus on primes where $A$ has ordinary reduction. Then $r_\frakp=2$ if $A$ mod $\frakp$ is geometrically simple and $r_\frakp=4$ if $A$ is not. Since $r_\frakp\geq r$, we see that $r_\frakp=4$ in (1)-(c), (2)-(a,b) for any $\frakp$ where $A$ has ordinary reduction. 
When $r=2$ (case (1)-(b,d), (2)-(c)), the dimension over $\Q$ of the orthogonal complement $T$ of $\NS(\ol A)$ in the Betti cohomology $\HH^2(A,\Q)$ is $4$.
By \cite{FC}*{Thm.~1}, if $r_\frakp$ were $4$ for a density one set of primes, then the endomorphism algebra $E$ of $T$ as a Hodge structure would have been a totally real field of degree $r_\frakp-r=2$ over $\Q$. Then $T$ would have been of dimension $2$ over $E$, which contradicts the assumption of the second part of Charles' theorem.
Now the remaining case is (1)-(a).
By \cite{FC}, for a density one set of $\frakp$, the rank $r_\frakp$ only depends on the degree of the endomorphism algebra $E$ of the transcendental part $T$ of the $\HH^2(A, \mathbb Q)$. This degree is the same for all $A$ in case (1)-(a) since $E= \End(T)\subset \End(\HH^2(A,\Q))$ is a set of Hodge cycles of $A\times A$ and all $A$ in this case have the same set of Hodge cycles. For more details we refer the reader to \cite{Can16}.
Hence we only need to study a generic abelian surface. For a generic abelian surface, its ordinary reduction is a (geometrically) simple CM abelian surface and hence $r_\frakp$ is $2$.
\end{proof}
 
\subsubsection{Algorithms to compute the geometric N\'eron--Severi rank of $A$}\label{algorithmsNSrank}
 
Here we discuss an algorithm provided by Charles in \cite{FC}. Charles' algorithm is to compute the geometric N\'eron--Severi rank of any $K3$ surface $X$, and his algorithm relies on the Hodge conjecture for codimension $2$ cycles in $X\times X$. However, the situation where the Hodge conjecture is needed does not occur for abelian surfaces, so his algorithm is unconditional for abelian surfaces.

Suppose that $A$ is a principally polarized abelian surface and $\Theta$ its principal polarization. We run the following algorithms simultaneously:
\begin{enumerate}
\item Compute Hilbert schemes of curves on $A$ with respect to $\Theta$ for each Hilbert polynomial, and find divisors on $A$. Compute its intersection matrix using the intersection theory, and determine the rank of lattices generated by divisors one finds. This gives a lower bound $\rho$ for $r = \rk\NS(\overline{A})$.
\item\label{Artin--Tate} For each finite place $\frakp$ of good reduction for $A$, compute the geometric N\'eron--Severi rank $r_\frakp$ for $A_{\frakp}$ using explicit point counting on the curve $C$ combined with the Weil conjecture and the Tate conjecture. Furthermore compute the square class $\delta(\frakp)$ of the discriminant of $\NS(A_{\frakp})$ in $\mathbb Q^\times / (\mathbb Q^\times)^2$ using the Artin--Tate conjecture: 
\begin{displaymath}
P_2(q^{-s}) \sim_{s \rightarrow 1} \left(\frac{\#\Br(A_{\frakp}) \cdot |\Disc (\NS(A_{\frakp}))|}{q} (1-q^{1-s})^{\rho(A_\frakp)}\right),
\end{displaymath}
where $P_2$ is the characteristic polynomial of the Frobenius endomorphism on 
\begin{displaymath}
\HH^2_{\et}(\overline{A}_\frakp, \mathbb Q_\ell),
\end{displaymath}
and $q$ is the size of the residue field of $\frakp$.
When the characteristic is not equal to $2$, then the Artin-Tate conjecture follows from the Tate conjecture for divisors (\cite{Mil75}), and the Tate conjecture for divisors in abelian varieties is known (\cite{Tate66}).
Note that as a result of \cite{LLR05}, the size of the Brauer group must be a square.
This gives us an upper bound for $r$.
\end{enumerate}

When $r$ is even, there exists a prime $\frakp$ such that $r=r_\frakp$. Thus eventually we obtain $r_\frakp = \rho$ and we compute $r$.

When $r$ is odd, it is proved in \cite{FC}*{Prop.~18} that there exist $\frakp$ and $\mathfrak q$ such that $r_\frakp = r_{\mathfrak q} = \rho+1$, but $\delta(\frakp) \neq \delta(\mathfrak q)$ in $\mathbb Q^\times / (\mathbb Q^\times)^2$. If this happens, then we can conclude that $r= r_\frakp -1$.

\begin{remark}
The algorithm (1) can be conducted explicitly in the following way: Suppose that our curve $C$ of genus $2$ is given as a subscheme in the weighted projective space $\mathbb P(1, 1, 3)$. Let $Y = \Sym^2(C)$ be the symmetric product of $C$. Then we have the following morphism
\begin{displaymath}
f : C \times C \rightarrow Y \rightarrow \Jac(C), \quad (P, Q) \mapsto [P+Q - K_C].
\end{displaymath}
The first morphism $C\times C \rightarrow Y$ is the quotient map of degree $2$, and the second morphism is a birational morphism contracting a smooth rational curve $R$ over the identify of $\Jac(C)$. We denote the diagonal of $C\times C$ by $\Delta$ and the image of the morphism $C \ni P \mapsto (P, \iota(P)) \in C \times C$ by $\Delta'$ where $\iota$ is the involution associated to the degree $2$ canonical linear system. Then we have
\begin{displaymath}
f^*\Theta \equiv 5p_1^*\{\mathrm{pt}\} + 5p_2^*\{\mathrm{pt}\} - \Delta.
\end{displaymath}
Note that $f^*\Theta$ is big and nef, but not ample.
If we have a curve $D$ on $\Jac(C)$, then its pullback $f^*D$ is a connected subscheme of $C\times C$ which is invariant under the symmetric involution and $f^*D . \Delta' = 0$, and vice verse. Hence instead of doing computations on $\Jac(C)$, we can do computations of Hilbert schemes and the intersection theory on $C\times C$. This may be a more effective way to find curves on $\Jac(C)$ and its intersection matrix.
\end{remark}

\begin{remark}
The algorithm (2) is implemented in the paper \cite{EJ}. 
\end{remark}

\subsection{the computation of the N\'eron--Severi lattice and its Galois action}
 
Here we discuss an algorithm to compute the N\'eron--Severi lattice and its Galois structure. We have an algorithm to compute the N\'{e}ron--Severi rank of $\overline{A}$, so we may assume it to be given. First we record the following algorithm:
\begin{lemma}
\label{lemma: overlattice}
Let $S$ be a polarized abelian surface or a polarized K3 surface over $k$, with an ample divisor $H$. Suppose that we have computed a full rank sublattice $M \subset \NS(\overline{S})$ containing the class of $H$, i.e., we know its intersection matrix, the Galois structure on $M\otimes \mathbb Q$, and we know generators for $M$ as divisors in $S$.
Then there is an algorithm to compute $\NS(\overline{S})$ as a Galois module.
\end{lemma}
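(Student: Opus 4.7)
The strategy is to realize $\NS(\ol S)$ as an overlattice of $M$ inside the dual $M^\ast \subset M\otimes \Q$ and to identify it among finitely many candidates. Since the intersection pairing on $\NS(\ol S)$ is $\Z$-valued and $M\subset \NS(\ol S)$ has full rank, every class $v \in \NS(\ol S)$ pairs integrally with all of $M$, giving the chain $M \subset \NS(\ol S) \subset M^\ast$. The quotient $M^\ast/M$ is a finite abelian group of order $|\Disc(M)|$, and its subgroups (equivalently, the overlattices $L$ with $M \subset L \subset M^\ast$) can be enumerated explicitly from the Gram matrix of $M$. Because the Galois action on $M\otimes \Q$ is known by hypothesis, it extends canonically to $M^\ast$, and we may at once discard the candidates that are not Galois-stable, since $\NS(\ol S)$ certainly is.

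For each remaining Galois-stable candidate $L$ and each coset representative $v \in L \setminus M$ (finitely many suffice), we test whether $v$ lifts to an actual divisor class. Using the ample class $H$ together with Riemann--Roch and Nakai--Moishezon on the surface $S$ (which is either an abelian surface or a K3 surface), we compute an integer $n_0$ depending only on $v^2$, $v\cdot H$, and $H^2$ such that, should $v$ belong to $\NS(\ol S)$, the class $v + n_0 H$ is represented by an effective divisor of effectively bounded $H$-degree $(v+n_0H)\cdot H$. After passing to a finite extension $k'/k$ over which the Galois orbit of $v$ is defined, we enumerate the relevant components of the Hilbert scheme of $S_{k'}$ whose Hilbert polynomial is compatible with the intersection numbers $(v + n_0 H)^2$ and $(v + n_0 H)\cdot H$, and we search for an effective divisor having the prescribed numerical class. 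The maximal overlattice $L$ for which every coset passes this test equals $\NS(\ol S)$, and the divisors produced during the search, together with the Galois action inherited from $M^\ast$, give the desired description of $\NS(\ol S)$ as a Galois module.

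The main obstacle lies in the effective Hilbert scheme search of the second step: enumerating the finitely many relevant Hilbert polynomials and checking the numerical class of an effective divisor is algorithmically clear but computationally demanding, and it must be repeated for every coset in every Galois-stable candidate $L$. All other ingredients reduce either to finite linear algebra over $\Z$ (enumeration of subgroups of $M^\ast/M$, computation of the dual lattice, verification of Galois stability) or to the action of the known finite Galois quotient on $M\otimes\Q$, so once the search terminates the algorithm outputs the N\'eron--Severi lattice and its Galois structure.
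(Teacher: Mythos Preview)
Your argument is correct and follows essentially the same route as the paper: enumerate the finitely many overlattices of $M$ inside $M^\ast$, then for each candidate test whether the coset representatives are realized by actual divisors by adding a suitable multiple of $H$, invoking Riemann--Roch to force effectivity, and searching the appropriate Hilbert scheme; the maximal overlattice that passes is $\NS(\ol S)$, with Galois structure inherited from $M\otimes\Q$. The only cosmetic difference is that the paper restricts from the outset to \emph{isotropic} subgroups of the discriminant group $M^\ast/M$ (citing Nikulin), i.e.\ to even integral overlattices, whereas you enumerate all subgroups and let the non-integral candidates fail the divisor test; and the paper does not bother with the preliminary Galois-stability filter or the explicit mention of Nakai--Moishezon, neither of which is needed but neither of which hurts.
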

\begin{proof}
We fix a basis $B_1, \cdots, B_r$ for $M$ which are divisors on $S$.
First note that the N\'eron--Severi lattice $\NS(\overline{S})$ is an overlattice of $M$. By Nikulin \cite{Nik}*{Sec.~1-4}, there are only finitely many overlattices, (they correspond to isotropic subgroups in $D(M) = M^\vee/M$), and moreover we can compute all possible overlattices of $M$ explicitly. Let $N$ be an overlattice of $M$. We can determine whether $N$ is contained in $\NS(\overline{S})$ in the following way:
 
Let $D_1, \cdots, D_s$ be generators for $N/M$. The overlattice $N$ is contained in $\NS(\overline{S})$  if and only if the classes $D_i$ are represented by integral divisors. After replacing $D_i$ by $D_i + mH$, we may assume that $D_i^2 >0$ and $\inter{D_i}{H} >0$. If $D_i$ is represented by an integral divisor, then it follows from Riemann--Roch that $D_i$ is actually represented by an effective divisor $C_i$. We define $k = \inter{D_i}{H}$ and $c= -\frac{1}{2}D_i^2$. 
The Hilbert polynomial of $C_i$ with respect to $H$ is $P_i(t)=kt +c$. Now we compute the Hilbert scheme $\mathrm{Hilb}^{P_i}$ associated with $P_i(t)$. For each connected component of $\mathrm{Hilb}^{P_i}$, we take a member $E_i$ of the universal family and compute the intersection numbers $\inter{B_1}{E},\ldots,\inter{B_r}{E}$. If these coincide with the intersection numbers of $D_i$, then that member $E_i$ is an integral effective divisor representing $D_i$. If we cannot find such an integral effective divisor for any connected component of $\mathrm{Hilb}^{P_i}$, then we conclude that $N$ is not contained in $\NS(\overline{S})$.
 
In this way we can compute the maximal overlattice $N_{\max}$ all whose classes are represented by integral divisors. This lattice $N_{\max}$ must be $\NS(\overline{S})$. Since $M$ is full rank, the Galois structure on $M$ induces the Galois structure on $\NS(\overline{S})$.
\end{proof}
 
\subsubsection{$\rk \, \NS(\overline{A}) = 1$}\label{rank1}
 
The goal of this subsection is to prove the following proposition:
 
\begin{prop}
Let $A$ be a principally polarized abelian surface defined over a number field $k$ whose
geometric N\'{e}ron--Severi rank is $1$. Let $X$ be the Kummer surface associated to $A$.
Then there is an explicit algorithm that computes $\NS(\overline{X})$ as a Galois module and furthermore computes the group $\Br_1(X)/\Br_0(X)$.
\end{prop}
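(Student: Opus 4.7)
The plan is to apply Lemma~\ref{lemma: overlattice} to a sublattice $M\subset \NS(\ol X)$ of full rank built from the Kummer construction, and then to deduce $\Br_1(X)/\Br_0(X)$ from the Galois module structure on $\NS(\ol X)$.

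First, since $\rk\NS(\ol A)=1$ and the theta divisor $\Theta$ is a positive generator, $\NS(\ol A)=\Z\cdot [\Theta]$. Under the Kummer construction, let $\epsilon:\widetilde A\to A$ be the blowup at $A[2]$ and $\pi:\widetilde A\to X$ the double cover induced by the (now fixed point free) involution $[-1]$. The exceptional divisors $E_1,\dots,E_{16}\subset X$ are indexed by the two-torsion points $P_1,\dots,P_{16}\in A[2](\ol k)$, and the class $\ol\Theta\in \NS(\ol X)$ characterised by $\pi^*\ol\Theta=\epsilon^*(2\Theta)$ together with the $E_i$ generate a rank $17$ sublattice $M\subset \NS(\ol X)$ with explicit intersection pairing ($\ol\Theta^2=4$, $E_i^2=-2$, all other pairings zero). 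Thus $M$ has full rank.

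Second, I determine the Galois action on $M$. The class $[\ol\Theta]$ is Galois fixed: since $\NS(\ol A)=\Z\cdot[\Theta]$ and $\Theta$ is ample, any $\sigma\in\Gamma$ must send $[\Theta]$ to $[\Theta]$. The Galois group permutes the $E_i$ via its action on $A[2](\ol k)$, which is effectively computable from $C$: writing $C:y^2=f(x)$ with $f$ of degree $6$ (or using the given equation for $C$), the $2$-torsion of $\Jac(C)$ is identified with partitions of the six Weierstrass roots of $f$ into two unordered triples (plus the origin), so the Galois image in $\Sym(A[2])$ is computed from the Galois action on the roots of $f$, which factors through a subgroup of $\Sp_4(\F_2)\cong S_6$.

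Third, I apply Lemma~\ref{lemma: overlattice} to $M$ with the ample class $H=N\cdot\ol\Theta-(E_1+\cdots+E_{16})$ for $N$ sufficiently large so that $H$ is ample (Nakai--Moishezon on $X$ reduces to finitely many checks on curve classes, all coming from Hilbert schemes already computable on $X$). Nikulin's theory enumerates finitely many overlattices $N\supset M$ inside $M^\vee$, and for each candidate class one runs the effectivity test of Lemma~\ref{lemma: overlattice}. In fact for Kummer surfaces the answer is classical: the required saturation is the ``Kummer lattice'' saturation at $2$, obtained by adjoining sums $\tfrac12\sum_{i\in S}E_i$ for those subsets $S\subset A[2]$ that arise as even theta characteristics/tropes, together with halves of $\ol\Theta+\sum_{i\in S}E_i$ determined by the symmetric line bundle $2\Theta$, so in practice only this finite collection of classes needs to be checked.

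Finally, the Galois action on $\NS(\ol X)$ so computed factors through the finite quotient $\Gal(k(A[2])/k)\subset S_6$, so the cohomology group
\[
\Br_1(X)/\Br_0(X)\;\isom\;\HH^1\!\bigl(\Gamma,\,\Pic(\ol X)\bigr)\;=\;\HH^1\!\bigl(\Gal(k(A[2])/k),\,\NS(\ol X)\bigr)
\]
is a finite group computable by standard group-cohomology algorithms (e.g.\ the \textsc{Magma} \texttt{CohomologyModule} package). The main obstacle is the effectivity test in Lemma~\ref{lemma: overlattice} via Hilbert schemes; this is bypassed here because the classes in $M^\vee/M$ that lift to $\NS(\ol X)$ are known explicitly from the classical theory of Kummer surfaces.
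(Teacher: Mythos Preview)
Your outline is essentially correct, but it diverges from the paper's route in one substantive way and contains one factual slip.

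\textbf{Different route.} You frame the computation as an application of Lemma~\ref{lemma: overlattice}: enumerate overlattices of $M=\langle\ol\Theta\rangle\oplus\bigoplus\Z E_i$ and test each candidate class for effectivity via Hilbert schemes. The paper does not do this in the rank~$1$ case. Instead it works with the saturation $K$ of $\bigoplus\Z\pi_*E_i$ directly, using the explicit description of $K$ from \cite{LP80} (the image of $K$ in $\bigoplus\tfrac12\Z/\Z$ consists of affine functions on $A[2]\cong\F_2^4$), computes the discriminant forms of $K$ and of $\pi_*\HH^2(A,\Z)$, and then recovers $\NS(\ol X)$ as $\HH^2(X,\Z)\cap\bigl(\pi_*\NS(\ol A)\oplus K\bigr)\otimes\Q$ after identifying the unimodular overlattice $\HH^2(X,\Z)$ via Nikulin's isotropic-subgroup formalism. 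In Section~\ref{computations} the paper takes an even shorter path: since the index $[\NS(\ol X):\pi_*\NS(\ol A)\oplus K]$ equals $2^\rho=2$, one needs only to locate a single extra half-integral class, and there is (up to $\calO(K)$) a unique even index-$2$ overlattice. Your approach would certainly work, but the paper's is more direct and avoids Hilbert-scheme computations entirely; you allude to this shortcut in your final paragraph but do not carry it out.

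\textbf{Slip.} Your description of $\Jac(C)[2]$ as ``partitions of the six Weierstrass roots into two unordered triples'' is wrong: that would give only $10$ nontrivial classes. The nonzero $2$-torsion points are the classes $[p_i-p_j]$ for unordered pairs $\{i,j\}$, giving $\binom{6}{2}=15$ points, exactly as the paper states.
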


The abelian surface $A$ is a principally polarized abelian surface, so the lattice $\NS(\overline{A})$ is isomorphic to the lattice $\langle 2\rangle$ with the trivial Galois action.
We denote the blow up of 16 $2$-torsion points on $A$ by $\widetilde{A}$ and the $16$ exceptional curves on $\tilde{A}$ by $E_i$. There is an isometry
\begin{displaymath}
\NS(\widetilde{A}_{\ol{k}} ) \cong \NS(\overline{A}) \oplus \bigoplus_{i=1}^{16} \mathbb Z E_i.
\end{displaymath}
We want to determine the Galois structure of this lattice. To this end, one needs to understand the Galois action on the set of $2$-torsion elements on $\ol A$. This can be done explicitly in the following way: Suppose that $A$ is given as a Jacobian of a smooth projective curve $C$ of genus $2$. Then $C$ is a hyperelliptic curve whose canonical linear series is a degree $2$ morphism. We denote the ramification points (over $\ol{k}$) of this degree $2$ map by $p_1, \cdots, p_6$. One can find the Galois action on these ramification points from the polynomial defining $C$. All non-trivial $2$-torsion points of $\ol A$ are given by $p_i-p_j$ where $i<j$. Note that $p_i - p_j \sim p_j-p_i$ as classes in $\Pic(C)$. Thus, we can determine the Galois structure on the set of $2$-torsion elements of $\ol A$.

Let $X$ be the Kummer surface associated to $A$ with the degree 2 finite morphism $\pi : \widetilde{A} \rightarrow X$. We take the pushforward of $\NS(\widetilde{A}_{\bar{k}})$ in $\NS(\overline{X} )$:
\begin{displaymath}
\NS(\overline{X}) \supset \pi_*\NS(\widetilde{A}_{\bar{k}}) \cong \pi_*\NS(\overline{A}) \oplus \bigoplus_{i=1}^{16} \mathbb Z \pi_*E_i.
\end{displaymath}
This is a full rank sublattice.
Thus the Galois representation for $\NS(\widetilde{A}_{\bar{k}})$ tells us the representation for $\NS(\overline{X})$. Hence we need to determine the lattice structure for $\NS(\overline{X})$. This is done in \cite{LP80}*{Sec.~3}. Let us recall the description of the N\'eron--Severi lattice for any Kummer surface.

According to \cite{LP80}*{Prop.~3.4} and \cite{LP80}*{Prop.~3.5}, the sublattice $\pi_*\NS(\widetilde{A}_{\bar{k}})$ is primitive in $\NS(\overline{X})$, and its intersection pairing is twice the intersection pairing of $\NS(\widetilde{A}_{\bar{k}})$. In particular, in our situation, we have $\pi_*\NS(\widetilde{A}_{\bar{k}}) \cong \langle 4\rangle$. Let $K$ be the saturation of the sublattice generated by the $\pi_*E_i$'s. Nodal classes $\pi_*E_i$ have self intersection $-2$. We have the following inclusions:
\begin{displaymath}
\bigoplus_{i=1}^{16} \mathbb Z \pi_*E_i \subset K \subset K^{\vee} \subset \left(\bigoplus_{i=1}^{16} \mathbb Z \pi_*E_i\right)^\vee = \bigoplus_{i=1}^{16} \frac{1}{2}\mathbb Z \pi_*E_i
\end{displaymath}
where $L^\vee$ denotes the dual abelian group of a given lattice $L$. We denote the set of $2$-torsion elements of $\ol A$ by $V$. We can consider $V$ as the $4$ dimensional affine space over $\mathbb F_2$. Then we can interpret  $\bigoplus_{i=1}^{16} \frac{1}{2}\mathbb Z\pi_*E_i/\mathbb Z\pi_*E_i$ as the space of $\frac{1}{2}\mathbb Z/\mathbb Z$-valued functions on $V$. \cite{LP80}*{Prop~3.6} shows that with this identification, the image of $K$ (resp. $K^\vee$) in $\bigoplus_{i=1}^{16} \frac{1}{2}\mathbb Z/\mathbb Z$ consists of polynomial functions $V \rightarrow \frac{1}{2}\mathbb Z/\mathbb Z$ of degree $\leq 1$ (resp. $\leq 2$.) Hence we have
\begin{displaymath}
\left[K : \bigoplus_{i=1}^{16} \mathbb Z \pi_*E_i\right] = 2^5, \quad [K^\vee : K ]= 2^6.
\end{displaymath}
This description allows us to choose an explicit basis for $K$ as well as to find its intersection matrix. The discriminant group of $K$ is isomorphic to $\mathbb F_2^6$ whose discriminant form is given by
\begin{displaymath}
\begin{pmatrix}
0&0&0&0&0&\frac{1}{2}\\
0&0&0&0&\frac{1}{2}&0\\
0&0&0&\frac{1}{2}&0&0\\
0&0&\frac{1}{2}&0&0&0\\
0&\frac{1}{2}&0&0&0&0\\
\frac{1}{2}&0&0&0&0&0
\end{pmatrix}.
\end{displaymath}
This discriminant form is isometric to the discriminant form of $\pi_*H^2(A, \mathbb Z)$ which is isomorphic to
\begin{displaymath}
\begin{pmatrix}
0&2\\
2&0
\end{pmatrix}
\oplus
\begin{pmatrix}
0&2\\
2&0
\end{pmatrix}
\oplus
\begin{pmatrix}
0&2\\
2&0
\end{pmatrix}
\end{displaymath}
Now we have overlattices:
\begin{displaymath}
\pi_*\NS(\overline{A}) \oplus K \subset \NS(\overline{X}).
\end{displaymath}
To identify $\NS(\overline{X})$, we consider the following overlattices:
\begin{displaymath}
\pi_* \HH^2(A, \mathbb Z) \oplus K \subset \HH^2(X, \mathbb Z).
\end{displaymath}
One can describe $\HH^2(X, \mathbb Z)$ using techniques in \cite{Nik}*{Sec~1.1-1.5}. Since the second cohomology of any $K3$ surface is unimodular, we have the following inclusions:
\begin{displaymath}
\pi_* \HH^2(A, \mathbb Z) \oplus K \subset \HH^2(X, \mathbb Z) = \HH^2(X, \mathbb Z)^\vee \subset (\pi_* \HH^2(A, \mathbb Z))^\vee \oplus K^\vee
\end{displaymath}
This gives us the following isotropic subgroup in the direct sum of the discriminant forms:
\begin{displaymath}
H = \HH^2(X, \mathbb Z)/\pi_* \HH^2(A, \mathbb Z) \oplus K \hookrightarrow D(\pi_* \HH^2(A, \mathbb Z)) \oplus D(K)
\end{displaymath}
where $D(L)$ denotes the discriminant group of a given lattice $L$. 

Since $\pi_* \HH^2(A, \mathbb Z)$ and $K$ are both primitive in $\HH^2(X, \mathbb Z)$, each projection $H \rightarrow D(\pi_* \HH^2(A, \mathbb Z))$ and $H \rightarrow D(K)$ is injective. Moreover, since $\HH^2(X, \mathbb Z)$ is unimodular, the isotropic subgroup $H$ must be maximal inside $D(\pi_* \HH^2(A, \mathbb Z)) \oplus D(K)$. This implies that both injections are in fact isomorphisms. Thus we determine $\HH^2(X, \mathbb Z)$ as an overlattice corresponding to $H$ in $D(\pi_* \HH^2(A, \mathbb Z)) \oplus D(K)$. Note that we can apply the orthogonal group $O(K)$ to $H$ so that $H$ is unique up to this action. Namely if we fix an identification $q_K = -q_K \cong q_{\pi_* \HH^2(A, \mathbb Z)}$ and $D(K) \cong D(\pi_* \HH^2(A, \mathbb Z))$, then we can think of $H$ as the diagonal in $D(K) \oplus D(\pi_* \HH^2(A, \mathbb Z))$.

We succeeded in expressing our embedding $\pi_* H^2(A, \mathbb Z) \oplus K \hookrightarrow H^2(X, \mathbb Z)$, hence we can express $\NS(\overline{X})$ as
\begin{displaymath}
\NS(\overline{X}) = H^2(X, \mathbb Z) \cap (\pi_*\NS(\overline{A}) \oplus K)\otimes \mathbb Q.
\end{displaymath}
Note that an embedding of $\NS(\overline{A})$ into $H^2(A, \mathbb Z)$ is unique up to isometries because of \cite{Nik}*{Thm~1.1.2\footnote{attributed to D.G. James}}, so we can map a generator of $\NS(\overline{A})$ to $e+f$ where $e, f$ is a basis for the hyperbolic plane $U = \begin{pmatrix}0&1\\ 1&0 \end{pmatrix}$. Thus we determine the lattice structure of $\NS(\overline{X})$.

\begin{remark}
In Section \ref{computations}, we will in fact use a somewhat simpler argument in order to describe $\NS(\ol X)$ as a Galois module. The advantage of the argument given in the current section is that it can be made applicable for higher rank cases. 
\end{remark}

\subsubsection{$\rk \, \NS(\overline{A}) = 2$}\label{rankNSA2}
 
The goal of this subsection is to prove the following proposition:
 
\begin{prop}
\label{prop:rank2}
Let $A$ be a principally polarized abelian surface defined over a number field $k$ whose geometric N\'{e}ron--Severi rank is $2$. Let $X$ be the Kummer surface associated to $A$. Then there is an explicit algorithm that computes $\NS(\overline{X})$ as a Galois module and furthermore computes the group $\Br_1(X)/\Br_0(X)$.
\end{prop}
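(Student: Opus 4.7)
The plan is to reduce Proposition~\ref{prop:rank2} to Lemma~\ref{lemma: overlattice} by explicitly producing a rank-$2$ sublattice $M\subset \NS(\overline{A})$ with known Galois action, and then transporting the resulting description of $\NS(\overline{A})$ to $\NS(\overline{X})$ via the Kummer construction carried out in \S\ref{rank1}. First I would use the algorithm in \S\ref{algorithmsNSrank} to certify that $r=\rk\NS(\overline{A})=2$; this also identifies the case of Theorem~\ref{32} we are in (namely (1)(b), (1)(d) or (2)(c)). The theta divisor $\Theta$ gives one Galois-invariant class of self-intersection $2$. For the second generator, I would search Hilbert schemes of curves on $C\times C$ (as in the remark following \S\ref{algorithmsNSrank}), invariant under the symmetric involution and pushed forward to $\Jac(C)$, until one finds a divisor $D$ whose class, together with $\Theta$, spans a rank-$2$ sublattice $M\subset \NS(\overline{A})$. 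The Galois orbit of $D$ is finite and computable from the defining data, which gives the Galois structure on $M\otimes \Q$. In the split case (2)(c), one can alternatively locate the elliptic factors $E_1,E_2$ effectively by the isogeny bounds of \S\ref{prodEC} and take the pullbacks of the origins as the second generator.

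Once $M$ is found, apply Lemma~\ref{lemma: overlattice} to enumerate the finitely many overlattices $N\supset M$ (they are classified by isotropic subgroups of $D(M)=M^{\vee}/M$) and to test each class in $N/M$ for effectivity using a Hilbert scheme computation on $A$ (Riemann--Roch guarantees that a representative, if it exists, is effective after twisting by a suitable multiple of $\Theta$). The maximal $N$ all of whose classes are effective is $\NS(\overline{A})$, and the Galois action is inherited from that on $M$ since $M$ has full rank. This produces $\NS(\overline{A})$ as a Galois module.

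Next, exactly as in \S\ref{rank1}, let $\widetilde{A}\to A$ be the blow-up at the sixteen $2$-torsion points and $\pi:\widetilde{A}\to X$ the double cover. The Galois action on the $2$-torsion of $\Jac(C)$ is read off from the ramification points of the hyperelliptic map, and this determines the Galois action on the sixteen exceptional classes $E_i$. The formulas of Looijenga--Peters \cite{LP80}*{Sec.~3} recalled in \S\ref{rank1} (the structure of $K$ and the identification of $\HH^2(X,\Z)$ with a prescribed overlattice of $\pi_*\HH^2(A,\Z)\oplus K$) then express $\NS(\overline{X})$ as $\HH^2(X,\Z)\cap (\pi_*\NS(\overline{A})\oplus K)\otimes \Q$, endowed with the Galois action coming from $\NS(\overline{A})$ and the permutation action on the $E_i$. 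From the explicit Galois module $\NS(\overline{X})$, one obtains $\Br_1(X)/\Br_0(X)\cong \HH^1(\Gamma,\Pic(\overline{X}))=\HH^1(\Gamma,\NS(\overline{X}))$: the Galois action factors through a finite quotient $\Gal(k'/k)$ (where $k'$ is generated by the fields of definition of the new generators of $\NS(\overline{A})$ and of the $2$-torsion of $A$), and finite-group cohomology of a finitely generated $\Z$-module is computable, e.g.\ by resolving $\NS(\overline{X})$ by permutation modules and computing kernels and images of explicit matrices.

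The main obstacle is the first step: effectively producing a second divisor class generating $\NS(\overline{A})\otimes \Q$ together with its Galois orbit. The Hilbert scheme search in \S\ref{algorithmsNSrank} is guaranteed to terminate because Theorem~\ref{32} gives, for a computable density-one set of primes, a matching upper bound on the rank; but a priori one has no bound on the degree of the minimal additional curve needed, and in cases (1)(b) and (1)(d) one must also control the field of definition of the new class, which is where the effective Faltings bounds of \S\ref{Effective_Faltings} enter (they control, at least, the degree of the field over which the extra endomorphisms of $A$ are defined, hence also a field over which a complementary divisor class is defined). Once these extensions are bounded, the rest of the argument is a finite computation and the algorithm terminates.
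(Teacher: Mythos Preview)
Your outline is correct and would yield an algorithm, but it differs from the paper's proof in one essential respect: you treat the search for a second generator of $\NS(\overline{A})$ as an unguided Hilbert-scheme search followed by an appeal to Lemma~\ref{lemma: overlattice}, whereas the paper first uses the Artin--Tate formula (item \eqref{Artin--Tate} in \S\ref{algorithmsNSrank}) at a prime of rank-$2$ reduction to compute the square class $-d$ of $\Disc(\NS(\overline{A}))$. This single invariant tells the paper exactly which intersection numbers to look for. If $d=1$ there is an isotropic class, hence an elliptic subgroup $E_1$; one searches for the minimal $n$ with a curve of Hilbert polynomial $nt$, obtains the Gram matrix $\left(\begin{smallmatrix}2&n\\n&0\end{smallmatrix}\right)$, and pins down the correct overlattice by a short case analysis on $D(M)$. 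If $d>1$ the paper uses the identification $\NS(\overline{A})\cong \End_{\bar{k}}(A)^{\dagger}$, an order in the real quadratic field $\Q(\sqrt{d})$, and searches for the minimal conductor $f$ via curves with the specific intersection numbers $(\Theta\cdot D_e, D_e^2)=(\Tr(e^2\omega^2),2e^4\Nm(\omega)^2)$. In both cases the Galois action is read off from the connected components of the relevant Hilbert scheme, and the embedding $\NS(\overline{A})\hookrightarrow \HH^2(A,\Z)$ is written down by hand (uniqueness up to isometry follows from Nikulin). Your reduction to Lemma~\ref{lemma: overlattice} is really the rank-$3$ argument applied one rank lower; it works, but the paper's approach yields $\NS(\overline{A})$ directly without enumerating overlattices.

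One point to correct: your closing paragraph invokes the effective Faltings bounds of \S\ref{Effective_Faltings} to control the degree of the second divisor and its field of definition. Those results bound isogeny degrees and the cokernel of $\End(A)\to \End_\Gamma(A_m)$; they do not directly bound the $\Theta$-degree of the minimal extra curve on $A$. The paper never uses \S\ref{Effective_Faltings} for this proposition. Termination of your search is guaranteed simply by existence of a second N\'eron--Severi class, and the field of definition of the endomorphisms is bounded by Silverberg's theorem (Lemma~\ref{FofEnd}), not by effective Faltings.
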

 
Once we compute $\NS(\overline{A})$, then we can compute $\NS(\overline{X})$ using techniques in the previous section, so here we focus on the computation of $\NS(\overline{A})$.
 
Using the Artin--Tate conjecture (see \ref{Artin--Tate} in section \ref{algorithmsNSrank}), one can compute a positive square free integer $d$ satisfying
\begin{displaymath}
\Disc(\NS(\overline{A})) \equiv -d \text{ in } \mathbb Q^\times/(\mathbb Q^\times)^2
\end{displaymath}
where we  may use any prime $p$ such that the N\'{e}ron--Severi rank of the reduction mod $p$ is $2$ since the discriminants $\Disc(\NS(\ol A))$ and $\Disc(\NS(\ol{A_p}))$ are equal in such a situation.
 
Suppose $d = 1$. In this case, since $\Disc(\NS(\overline{A}))$ is a negative square, there exists a primitive class $D \in \NS(\overline{A})$ such that $\inter{D}{D} = 0$ and $\inter{D}{\Theta} >0$. This implies that $A$ contains an elliptic subgroup $E_1$. Let $\inter{\Theta}{E_1} = n$.
Then $\NS(\overline{A})$ contains the rank $2$ lattice generated by $\Theta$ and $E_1$ as a full rank sublattice whose intersection matrix is given by 
\begin{displaymath}
M=
\begin{pmatrix}
2 & n \\
n & 0
\end{pmatrix}.
\end{displaymath}
The nef cone of A is the positive cone of this intersection matrix given by
\begin{displaymath}
2x^2 + 2nxy \geq 0.
\end{displaymath}
This cone has two rational extremal rays, one is generated by $E_1$ and another is generated by another elliptic subgroup $E_2$ which has the form of $\frac{1}{a}(n \Theta -E_1)$ for some positive integer $a$. Then $\inter{\Theta}{E_2} = n/a$ must be an integer, so $a$ divides $n$. We have $E_1= n\Theta-aE_2$, but the fact that $E_1$ has to be primitive implies that $a=1$.

Therefore starting from $e=1$, we search a for curve $D$ such that $\inter{\Theta}{D} = e$ and $D^2=0$ using Hilbert scheme computations. The first such $e$ where we find a such curve equals $n$.

To identify $\NS(\overline{A})$, which is an overlattice of $M$, we compute the discriminant form of $M$.

When $n$ is odd, the discriminant group $D(M)$ is isomorphic to $\mathbb Z/n^{2}\mathbb Z$ whose generator takes the value $-\frac{2}{n^2}$. Any isotropic subgroup has the form of $I_l := ln\mathbb Z/n^{2}\mathbb Z$ where we may choose $l$ dividing $n$. Let $D_l$ be a divisor class generating $I_l$. Any candidate for this divisor satisfies $\inter{\Theta}{D_l}=0$ and $\inter{E_1}{D_l}= 2nl$. Then $D_l + n \Theta$ is an effective divisor satisfying the intersections $\inter{\Theta}{D_l + n\Theta}= 2n$ and $\inter{E_1}{D_l +n \Theta}= n(2l+n)$. Thus for each $e$ dividing $n$ we search for divisors with these intersection properties, and the smallest $e$ for which we can find such a divisor equals $l$. Therefore, we can determine all isotropic subgroups of $D(M)$ and hence all overlattices of $M$.

When $n$ is even, the discriminant group $D(M)$ is isomorphic to $\mathbb Z/2\mathbb Z \oplus \mathbb Z/(n^2/2)\mathbb Z$ whose discriminant form takes values $q(1,0)=\frac{1}{2}$ and $q(0,1)=-\frac{2}{n^2}$. Again there are only finitely many isotropic subgroups and for each subgroup, we can test whether generators of subgroups are integral divisor classes or not by adding a multiple of the theta divisor and doing the Hilbert schemes computation. Thus we can determine $\NS(\overline{A})$ as in the odd $n$ case. 

Now we would like to understand the Galois structure. To this end, we compute the Hilbert scheme of curves that have the Hilbert polynomial $nt$. This Hilbert scheme parametrizes elliptic curves in $A$, and over an algebraically closed field it is the disjoint union of two elliptic curves. If they are conjugate to each other, then the Galois action swaps $E_1$ and $E_2$. Otherwise the Galois action on $\NS(\ol A)$ is trivial.

Suppose $d > 1$. In this case, $A$ is geometrically simple. The endomorphism algebra $\End^0_{\bar{k}}(A)$ is either a totally real quadratic field or a degree $4$ CM field over a totally real quadratic field. According to \cite{MumAV}*{p.~208}, we have an isomorphism
\begin{displaymath}
\NS(\overline{A}) \cong \End_{\bar{k}}(A)^\dagger
\end{displaymath}
where $\dagger$ is the Rosati involution with respect to $\Theta$. When $\End^0_{\bar{k}}(A)$ is a totally real quadratic field, the Rosati involution is trivial. If $\End^0_{\bar{k}}(A)$ is a CM field, then $\dagger$ is the complex conjugation. In either case, $\End_{\bar{k}}(A)^\dagger$ is an order in a totally real quadratic field $\End^0_{\bar{k}}(A)^\dagger$ which is isomorphic to $\mathbb Q(\sqrt{d})$. We define $\omega \in \mathbb Q(\sqrt{d})$ by
\begin{displaymath}
\omega = 
\begin{cases}
1+\sqrt{d} & \text{ if } d \equiv 2, 3 \bmod 4,\\
\frac{1+\sqrt{d}}{2}& \text{ if } d \equiv 1 \bmod 4.
\end{cases}
\end{displaymath}
Then every order $\mathcal O$ in $\mathbb Q(\sqrt{d})$ has the form of
\begin{displaymath}
\mathcal O = \mathbb Z \oplus \mathbb Z f\omega,
\end{displaymath}
for some positive integer $f \in \mathbb Z$. 

To find $f$, consider the curve class $D_e := (e\omega)^*\Theta$. By \cite{MumAV}*{p.~192, Thm.~1} this class satisfies
\begin{displaymath}
\inter{D_e}{\Theta} = \Tr_{\mathbb Q(\sqrt{d})/\mathbb Q}(e^2\omega^2), \quad D_e^2 = 2e^4\Nm_{\mathbb Q(\sqrt{d})/\mathbb Q}(\omega)^2.
\end{displaymath}
Starting from $e=1$, we search for curves with these intersection numbers using Hilbert scheme computations, and the first $e$ where we find such a curve is equal to $f$. The Galois action on $D_f$ determines the Galois structure for $\NS(\overline{A})$.

To compute $\NS(\overline{X})$ where $X$ is the Kummer surface associated to $A$, one needs to specify an embedding of $\NS(\overline{A})$ into $\HH^2(A, \mathbb Z)$. It follows from \cite{Nik}*{Thm~1.14.4} that such an embedding is unique up to automorphisms, so we only need to specify one embedding.

Choose a basis $D_1, D_2$ for $\NS(\overline{A})$ such that we may set $D_1^2 = 2a >0$, $\inter{D_1}{D_2} = b$, and $D_2^2 = 2c$.
We have an isomorphism
\begin{displaymath}
\HH^2(A, \mathbb Z) \cong \begin{pmatrix} 0&1\\1&0\end{pmatrix}\oplus\begin{pmatrix} 0&1\\1&0\end{pmatrix}\oplus \begin{pmatrix} 0&1\\1&0\end{pmatrix}.
\end{displaymath}
We denote the basis by $e_1, f_1, e_2, f_2, e_3, f_3$. The desired embedding is given by
\begin{displaymath}
D_1 \mapsto e_1 + af_1, \quad D_2 \mapsto bf_1 + e_2 + cf_2.
\end{displaymath}
Thus Proposition~\ref{prop:rank2} follows. Alternatively we can realize $Y = A/\pm1$ as a quartic surface in $\mathbb P^3$ (\cite{FS97}). We may apply the Hilbert scheme computations to this surface $Y$.

\subsubsection{$\rk \, \NS(\overline{A}) = 3$}

\begin{prop}
\label{prop: rank3}
Let $A$ be a principally polarized abelian surface defined over a number field $k$ whose geometric N\'{e}ron--Severi rank is $3$. Let $X$ be the Kummer surface associated to $A$. Then there is an explicit algorithm that computes a bound on the size of $\Br_1(X)/\Br_0(X)$.
\end{prop}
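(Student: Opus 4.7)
The plan is to mirror the strategy of Propositions~\ref{rank1} and \ref{prop:rank2}: produce a full-rank sublattice $M\subset \NS(\overline{A})$ together with its Galois action, apply Lemma~\ref{lemma: overlattice} to bound or identify the overlattice $\NS(\overline{X})$ up to a finite set of possibilities coming from Nikulin's classification of overlattices, and finally read off (a bound for) $\Br_1(X)/\Br_0(X)\cong H^1(\Gamma,\NS(\overline{X}))$. By Theorem~\ref{32}, rank $3$ splits into two cases: (1)(c) $\End^0_{\overline{k}}(A)$ is an indefinite quaternion algebra over $\mathbb Q$, and (2)(b) $A$ is geometrically isogenous to $E\times E$ for an elliptic curve $E$ without complex multiplication. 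A first step is to decide which case we are in by combining the point-counting/Artin--Tate algorithm of \S\ref{algorithmsNSrank} at several primes $\frakp$ with $r_\frakp = 4$ in order to pin down the square class of $\Disc\NS(\overline{A})$ and the isogeny decomposition up to $\overline{k}$.

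In the product case (2)(b), Theorem~\ref{AtoEC} furnishes an explicit constant $C_3(h(A),[k:\mathbb Q])$ bounding the degree of an isogeny $A\to E\times E$ over the field $k_1$ of definition of all geometric endomorphisms, and $[k_1:k]$ is bounded by Lemma~\ref{FofEnd}. Pulling back the standard generators $E\times\{0\}$, $\{0\}\times E$ and the diagonal $\Delta$ of $\NS(\overline{E\times E})$ through an isogeny of bounded degree yields, up to clearing denominators, a full-rank sublattice $M\subset\NS(\overline{A})$ whose intersection pairing we can write down; the Galois action on $M$ is controlled by that on $\Hom_{\overline{k}}(E,E)$, which by Theorem~\ref{effFalEC} becomes defined over a finite extension whose degree is again bounded in terms of $h(A)$ and $[k:\mathbb Q]$.

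In the quaternion case (1)(c), we use the isomorphism $\NS(\overline{A})\cong \End_{\overline{k}}(A)^\dagger$ of \cite{MumAV}*{p.~208}. Theorem~\ref{effFal} bounds the integer $M$ killing the cokernel of $\End_k(A)\to\End_\Gamma(A_m)$, and together with Lemma~\ref{FofEnd} yields an explicit extension $k_1/k$ over which all geometric endomorphisms are defined, together with an explicit bound on the Rosati norms of a $\mathbb Z$-basis of $\End_{\overline{k}}(A)^\dagger$. Translating to divisor classes via $\alpha\mapsto \alpha^*\Theta$ and using the formulas of \cite{MumAV}*{p.~192} for intersection numbers on the Jacobian, we can enumerate Hilbert polynomials up to the effective bound and search the corresponding Hilbert schemes for representatives, exactly as in the rank $2$ argument. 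The output is a triple of divisors spanning a full-rank sublattice $M\subset\NS(\overline{A})$ whose intersection matrix matches the predicted discriminant and whose Galois module structure factors through $\Gal(k_1/k)$.

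Given $M$ in either case, we push forward to a full-rank sublattice of $\NS(\overline{X})$ as in subsection~\ref{rank1}, enumerate Nikulin overlattices inside $D(M)$, and test each one by Hilbert scheme computations on $X$ (Lemma~\ref{lemma: overlattice}). Even if we cannot single out the true overlattice, the finite list produces a finite list of candidate Galois modules, each with an explicitly computable $H^1$, whose maximum bounds $|\Br_1(X)/\Br_0(X)|$. The principal obstacle is effectivity in the quaternion subcase: the bound on Hilbert polynomials we must inspect is governed directly by the effective Faltings estimate of Theorem~\ref{effFal}, which is astronomical but explicit; everything else reduces to finite linear algebra over $\mathbb Z$ and standard Galois cohomology of finitely generated $\mathbb Z$-lattices with known action.
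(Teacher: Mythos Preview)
Your proposal is correct in spirit but takes a considerably more elaborate route than the paper. The paper's proof observes that a full-rank sublattice $M\subset\NS(\overline{A})$ is already a \emph{byproduct} of Charles' rank-determination algorithm of \S\ref{algorithmsNSrank}: that algorithm terminates precisely when it has found enough divisors (via Hilbert scheme computations on $A$ or $C\times C$) to witness $\rho\geq 3$, and those divisors together with their intersection matrix are exactly the input needed for Lemma~\ref{lemma: overlattice}. From there the paper proceeds just as in your final paragraph (overlattice enumeration for $\pi_*\NS(\overline{A})\oplus K$, then Galois cohomology). Your detour through the effective Faltings machinery (Theorems~\ref{effFal}, \ref{AtoEC}, \ref{effFalEC}) to manufacture a full-rank sublattice is therefore unnecessary for this proposition; in the paper those estimates are reserved for bounding the \emph{transcendental} quotient $\Br(X)/\Br_1(X)$, not the algebraic one.

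Your case-distinction step also contains a small gap: at primes with $r_\frakp=4$ the Artin--Tate formula returns the square class of $\Disc(\NS(\overline{A_\frakp}))$, a rank-$4$ lattice, which does not directly recover $\Disc(\NS(\overline{A}))$; the trick from \S\ref{rankNSA2} only applies when $r_\frakp=r$, and in rank $3$ this never happens. The paper instead distinguishes the two cases \emph{after} obtaining the full-rank sublattice $M$, by testing whether $M$ represents zero nontrivially (an isotropic vector signals an elliptic subgroup, hence the non-simple case). In any event the distinction is not essential to the argument, since both cases are handled uniformly via Lemma~\ref{lemma: overlattice}.
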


In this case two possible situations arise: the first one when $A$ is simple and $D$ is an indefinite quaternion algebra, and the second one when $A$ is isogenous to the product $E \times E$ where $E$ is an elliptic curve without complex multiplication. Distinguishing between the two case is possible by using the Hasse principle, i.e., whether a sublattice of $\NS(\overline{A})$ contains a non-trivial vector $D$ representing zero, i.e., $D^2=0$.

The second case can be handled in a way similar to the first case, so we focus on the case when $A$ is simple with a quaternion multiplication.

One can compute $\NS(\overline{A})$ using Lemma~\ref{lemma: overlattice}. To apply that lemma, we need a full rank sublattice. Such a lattice is a byproduct of the algorithm discussed in section \ref{algorithmsNSrank} that one may have used to determine that the N\'{e}ron--Severi rank is 3. Since $\NS(\overline{X})$ is an overlattice of $\pi^*\NS(\overline{A}) \oplus K$, there are only finitely many possibilities for $\NS(\ol X)$. Thus our assertion follows. Again alternatively we apply the Hilbert scheme computations, intersection numbers computations, and Lemma~\ref{lemma: overlattice} to a minimal resolution $X$ of the quartic surface. This provides us an exact computation of $\NS(\overline{X})$.

\begin{remark}
One can create a list of finitely many candidates for $\NS(\overline{A})$. The endomorphism algebra $D$ is a quaternion algebra given by the Hilbert symbol $(a,b)$ where $a, b$ are square free integers. (Then $\NS(\overline{A})_{\mathbb Q}$ is given by a quadratic form proportional to $z^2 - ax^2 -by^2$, so once we know what $\NS(\overline{A})$ is, we can compute $D$.) An example of maximal orders for a quaternion algebra $D$ is given in \cite{albert}, and for any fixed maximal order of $D$ there is an abelian surface $A'$ isogenous to $A$ such that $A'$ has multiplication by the given maximal order. To see this, one first notices that all maximal orders in the indefinite quaternion algebra $D$ over $\Q$ are conjugate (see \cite{Clark}*{Chp.~0, Cor.~39}) and so we may assume that $\End(A)$ is contained in the maximal order that we have chosen. Then by \cite{GR14}*{Thm.~1.2}, there is an isogeny between $A$ and $A'$ whose degree can be bounded by a constant in terms of $h(A)$ and $[k:\Q]$. For such $A'$, one can compute its N\'eron--Severi lattice by using \cite{DR04}*{Thm.~3.1}.
\end{remark}

\subsubsection{$\rk \, \NS(\overline{A}) = 4$}

In this section, we discuss the following proposition:

\begin{prop}
\label{prop: rank4}
Let $A$ be a principally polarized abelian surface defined over a number field $k$ whose geometric N\'{e}ron--Severi rank is $4$. Let $X$ be the Kummer surface associated to $A$. Then there is an explicit algorithm that computes a bound on the size of $\Br_1(X)/\Br_0(X)$.
\end{prop}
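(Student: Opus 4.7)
The plan is to reduce to a product of CM elliptic curves by means of Corollary \ref{rk4deg}, compute the relevant N\'eron--Severi lattice there explicitly, transfer the answer to $A$ by a saturation argument via Lemma \ref{lemma: overlattice}, and then pass to the Kummer surface as in Section \ref{rank1}.

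By Theorem \ref{32}(2)-(a), the surface $A$ is geometrically isogenous to $E\times E$ for some CM elliptic curve $E$. Apply Corollary \ref{rk4deg}: there is a finite extension $k\subset k_2$ with $[k_2:k]\le C_5(h(A),[k:\Q])$ and an elliptic curve $E''/k_2$ with $\End_{k_2}(E'')=\mathcal O_K$ for the CM field $K$, together with an isogeny $\varphi\colon A_{k_2}\to E''\times E''$ of degree at most $C_4(h(A),[k:\Q])$. After replacing $k_2$ by its Galois closure over $k$ (which keeps the degree effectively bounded), all $\bar k$-endomorphisms of $A$, and hence all divisor classes in a full-rank sublattice of $\NS(\overline A)$, are defined over $k_2$; consequently the $\Gal(\bar k/k)$-action on $\NS(\overline A)$ factors through $\Gal(k_2/k)$.

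Over $k_2$ one has the standard decomposition
\begin{displaymath}
\NS\bigl(\overline{E''\times E''}\bigr)\;\cong\;\Z[E''\times 0]\oplus\Z[0\times E'']\oplus \End_{\bar k}(E'')^{\dagger},
\end{displaymath}
where the last summand is given by graphs of endomorphisms and $\dagger$ is the Rosati involution (complex conjugation in the CM case); its intersection form is computed from $\Tr$ and $\Nm$ on $\mathcal O_K$, and $\Gal(\bar k/k_2)$ acts trivially. Pulling back along $\varphi$ yields a Galois-equivariant embedding of a full-rank sublattice $M\subset \NS(\overline A)$, whose intersection form differs from the one above by the explicit factor $\deg \varphi$. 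I then apply Lemma \ref{lemma: overlattice} to the quartic model $Y\subset \PP^3$ of $A/\{\pm 1\}$ from \cite{FS97} so that Hilbert-scheme representatives are computable: only finitely many overlattices of $M$ need to be tested, and Lemma \ref{lemma: overlattice} selects the unique one realised by integral classes, thereby pinning down $\NS(\overline A)$ as a Galois module. The Kummer construction of Section \ref{rank1} then produces $\NS(\overline X)$ as a Galois module, adding the $16$ exceptional classes $\pi_*E_i$ whose Galois action is read off from the action on $A[2]$ (itself computable from an equation for $C$) and saturating by the finite procedure of \cite{Nik}*{Sec.~1.1--1.5}.

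Since $\Br_1(X)/\Br_0(X)\cong \HH^1(k,\NS(\overline X))$ and the Galois action on $\NS(\overline X)$ (a free $\Z$-module of rank $16+4=20$) factors through a finite Galois extension $L/k$ whose degree is bounded effectively in terms of $C_5(h(A),[k:\Q])$, the degree of the $2$-torsion field of $A$, and a quadratic splitting field for $\mathcal O_K$, the group $\HH^1(\Gal(L/k),\NS(\overline X))$ is a finite group killed by $[L:k]$, and an effective upper bound for its order in terms of $[L:k]$ and $20$ is standard. The main obstacle is not conceptual but bookkeeping: one must carry the bounds on isogeny degrees and on all auxiliary extensions through the construction so that the final estimate is genuinely effective, and one must verify that the Hilbert-scheme step inside Lemma \ref{lemma: overlattice} is implementable for the explicit quartic model of $A/\{\pm1\}$.
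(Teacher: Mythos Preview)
Your argument is correct and follows essentially the same route as the paper: the paper's proof simply invokes the rank~$3$ argument (Lemma~\ref{lemma: overlattice} applied to a full-rank sublattice, then the Kummer overlattice passage), and its subsequent remark spells out, as you do, how the isogeny to $E\times E$ with $E$ CM by a maximal order supplies that sublattice explicitly via the known lattice structure of $\NS(\overline{E\times E})$. One small wording issue: Lemma~\ref{lemma: overlattice} is stated for an abelian or K3 surface, so it should be applied to $A$ (or to $X$) rather than to the singular quartic $Y$; the quartic model is only the ambient space in which the Hilbert-scheme searches are carried out, exactly as the paper indicates.
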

The proof of Proposition~\ref{prop: rank3} applies to this case.
Moreover, one can create a finite list of possibilities for $\Br_1(X)/\Br_0(X)$. Indeed,
in this case, the abelian surface $A$ is isogenous to a self product $E\times E$ of an elliptic curve $E$ with complex multiplication. 
Moreover, one can assume that the endomorphism ring $\End_{\bar{k}}(E)$ is the maximal order of an imaginary quadratic field $\mathbb Q(\sqrt{-d})$ where $d$ is a positive square free integer. The degree $e$ of an isogeny from $A$ to $E\times E$ is bounded by an explicit constant depending on the Faltings' height by Lemma~\ref{lemma: isogenyI} and Theorem~\ref{theorem: isogenyI}. According to \cite{Kani}*{Cor.~23}, the square class of $\Disc(\NS(\ol A))$ is
\begin{displaymath}
\Disc(\NS(\overline{A})) \equiv -d  \mod \mathbb (\mathbb Q^\times)^2,
\end{displaymath}
so the value $d$ can be determined by using the Artin-Tate conjecture. Then the lattice structure of $\NS((E\times E)_{\bar{k}})$ is given by \cite{Kani}*{Prop.~22}, so $\NS(\overline{A})$ is an overlattice of the lattice defined by $e$ times the intersection matrix of $\NS((E\times E)_{\bar{k}})$. Thus there are only finitely many possibilities for $\NS(\overline{A})$. Then $\NS(\overline{X})$ is an overlattice of $\pi^*\NS(\overline{A}) \oplus K$.
Again alternatively we apply Hilbert scheme computations to the quartic surface $Y = A/\pm 1$.

\section{Effective bounds for the transcendental part of Brauer groups}\label{Trans_part}
\subsection{The general case}
\label{subsec: general case}
Let $A$ be a principally polarized abelian surface defined over a number field $k$. Let $X = \Kum(A)$ be the Kummer surface associated to the abelian surface $A$. The goal of this section is to prove the following theorem:

\begin{theorem}
\label{theorem: effectivebounds}
There exists an effectively computable constant $M_1$ depending on the number field $k$, the Faltings' height $h(A)$, and $\NS(\ol{A})$ satisfying
\begin{displaymath}
\#\frac{\Br(X)}{\Br_1(X)} \leq M_1.
\end{displaymath}
\end{theorem}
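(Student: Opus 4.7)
The approach is to follow the strategy outlined in the introduction. First, the injection $\Br(X)/\Br_1(X) \hookrightarrow \Br(\ol{X})^\Gamma$, combined with the Skorobogatov--Zarhin identification $\Br(\ol{X}) = \Br(\ol{A})$, reduces the problem to bounding $\#\Br(\ol{A})^\Gamma$. Since $\Br(\ol{A})$ is a torsion group, I decompose $\Br(\ol{A})^\Gamma = \bigoplus_\ell \Br(\ol{A})\{\ell\}^\Gamma$ and bound each $\ell$-primary summand separately. Because $\Br(\ol{A})\{\ell\}^\Gamma = \varinjlim_n \Br(\ol{A})_{\ell^n}^\Gamma$, it suffices to bound $\#\Br(\ol{A})_{\ell^n}^\Gamma$ uniformly in $n$ for each $\ell$, and then verify that only finitely many primes $\ell$ contribute nontrivially.

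From the six-term exact sequence displayed in the introduction, I have
\begin{displaymath}
\#\Br(\ol{A})_{\ell^n}^\Gamma \;\leq\; \#\coker(f_n) \cdot \#\ker(g_n),
\end{displaymath}
so I bound each factor independently. For $\coker(f_n)$, passing to the inverse limit in $n$ identifies it, up to controlled torsion, with the cokernel of the Tate-style map $\NS(\ol{A}) \otimes \Z_\ell \to H^2_\et(\ol{A}, \Z_\ell(1))^\Gamma$. Using the principal polarization and the Weil pairing to relate $H^2_\et(\ol{A}, \Z_\ell(1))$ to a subquotient of $\End(T_\ell A)$, this cokernel is governed by the cokernel of $\End_k(A) \otimes \Z_\ell \to \End_\Gamma(T_\ell A)$, which is killed uniformly in $\ell$ and $n$ by the integer $M$ produced in Theorem~\ref{effFal} (and Theorems~\ref{AtoEC}, \ref{FalEC}, \ref{FalECnp} when $A$ is not geometrically simple). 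The resulting bound depends only on $h(A)$ and $[k:\Q]$, and in fact $\coker(f_n) = 0$ for every prime $\ell$ coprime to $M$ and to $\Disc(\NS(\ol{A}))$.

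For $\ker(g_n)$, the key input is that the Galois action on $\NS(\ol{A})$ factors through a finite quotient $\Gamma \twoheadrightarrow G$ that is explicitly computed by the algorithms of Section~\ref{Alg_part1}. An inflation-restriction argument then bounds $\#H^1(\Gamma, \NS(\ol{A})/\ell^n)$ uniformly in $n$ in terms of $|G|$ and $\rk \NS(\ol{A})$, and this cohomology group vanishes for $\ell \nmid |G|$; the same holds a fortiori for $\ker(g_n)$. Combining the two estimates yields an effective bound on $\#\Br(\ol{A})\{\ell\}^\Gamma$ for each $\ell$, which is trivial outside the finite set of primes dividing $M \cdot |G| \cdot \Disc(\NS(\ol{A}))$. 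Summing over this finite set of primes produces the constant $M_1$, which depends only on $k$, $h(A)$, and $\NS(\ol{A})$.

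The main obstacle is the first bound: translating the endomorphism-comparison cokernel controlled by effective Faltings into an effective bound for $\coker(f_n)$ on the Néron--Severi side. This requires carefully identifying the image of $f_n$ inside $H^2_\et(\ol{A}, \mu_{\ell^n})^\Gamma$ via the cycle class map and tracking its interaction with the Weil pairing, and it will produce the dominant contribution to the size of the final constant $M_1$. The argument must also be carried out uniformly across the endomorphism-algebra strata of Theorem~\ref{32}, each of which feeds into a different variant of the effective Faltings input from Section~\ref{Effective_Faltings}.
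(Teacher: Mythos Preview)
Your treatment of $\ker(g_n)$ contains a genuine gap. You claim that inflation--restriction bounds $\#\HH^1(\Gamma, \NS(\ol{A})/\ell^n)$ uniformly in $n$ in terms of $|G|$ and $\rk\NS(\ol{A})$, but this is false. If $\Gamma' = \ker(\Gamma \twoheadrightarrow G)$, the restriction map lands in $\HH^1(\Gamma', \NS(\ol{A})/\ell^n)^G = \Hom(\Gamma', \NS(\ol{A})/\ell^n)^G$, and this group is \emph{not} bounded as $n\to\infty$: the absolute Galois group of any number field admits cyclic $\ell^n$-extensions of arbitrarily large $n$. The paper makes exactly this point, noting that after passing to a field where the action on $\NS(\ol{A})$ is trivial, $\HH^1(\Gamma, \NS(\ol{A})/\ell^n) = \Hom(\Gamma, \NS(\ol{A})/\ell^n)$ is infinite and ``requires a careful analysis.'' So bounding the ambient $\HH^1$ cannot work, and your ``a fortiori'' conclusion for $\ker(g_n)$ does not follow.

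The paper's fix is to abandon the attempt to bound $\#\ker(g_n)$ directly and instead bound the maximal \emph{order} of elements in $\ker(g_n)$ (and in $\coker(f_n)$) uniformly in $n$. This suffices because $\Br(\ol{A}) \cong (\Q/\Z)^{6-\rho}$, so a bound on the exponent of $\Br(\ol{A})_{\ell^n}^\Gamma$ yields a bound on its cardinality. The exponent bound for $\ker(g_n)$ is obtained via the transcendental lattice $T_A$: after base change so that $\Gamma$ acts trivially on $\NS(\ol{A})$, one controls $\ker(g_n)$ through $(T_A/\ell^n)^\Gamma$ and the finite group $K_\ell$ of order dividing $\delta = \det\NS(\ol{A})$, and $(T_A/\ell^n)^\Gamma$ embeds into $\End_\Gamma(A[\ell^n])/\End(A)$, which is where effective Faltings enters. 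Your sketch of the $\coker(f_n)$ bound is in the right spirit, but the $\ker(g_n)$ step needs this different argument.
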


\begin{remark}\label{rmk_bc}
After taking an algebraic extension $k\subset k'$, the image of $\Br(X)$ in $\Br(\Xbar)$ is contained in the image of $\Br(X_{k'})$, so in order to find a bound for the transcendental part as desired, taking algebraic field extensions is allowed. 
\end{remark}

First we use the following important theorem by Skorobogatov and Zarhin:
\begin{theorem}{\cite{SZ12}*{Prop.~1.3}}\label{thm_SZ1.3}
Let $A$ be an abelian surface defined over a number field $k$ and $X = \Kum(A)$ the associated Kummer surface. Then there is a natural map
\begin{displaymath}
\Br(\ol{X}) \cong \Br(\ol{A})
\end{displaymath}
which is an isomorphism of Galois modules.
\end{theorem}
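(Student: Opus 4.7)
The plan is to exploit the standard realization of the Kummer surface as $X = \tilde A/[-1]$, where $\tilde A$ denotes the blow-up of $A$ at its sixteen $2$-torsion points $A[2]$, and to compare Brauer groups along the two maps $b: \tilde A \to A$ and $\pi: \tilde A \to X$. Since $A[2]$ is a Galois-stable set and the involution $[-1]$ is defined over $k$, both $b$ and $\pi$ are defined over $k$, so every identification produced below will automatically be equivariant for $\Gamma = \Gal(\ol k/k)$. This immediately reduces the claim to the level of $\ol k$-varieties, after checking naturality of the intermediate isomorphisms.

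The first step is birational invariance of the Brauer group: blowing up a smooth surface at a finite set of closed points induces an isomorphism $b^*: \Br(\ol A) \xrightarrow{\sim} \Br(\tilde{\ol A})$, because each exceptional $\mathbb P^1$ contributes only algebraic classes to $H^2_{\et}$ (alternatively, this is Grothendieck's purity for the Brauer group at a codimension-$2$ regular point). The second step is to analyze the finite quotient map $\pi: \tilde{\ol A} \to \ol X$. Let $\sigma$ denote the lift of $[-1]$ to $\tilde A$; its fixed locus is exactly the sixteen exceptional curves, and $\pi$ is \'etale of degree $2$ outside this locus. The usual pullback-pushforward formalism yields $\pi_*\pi^* = 2$ and $\pi^*\pi_* = 1 + \sigma^*$ on Brauer groups, so $\pi^*$ lands in $\Br(\tilde{\ol A})^\sigma$ with kernel and cokernel annihilated by $2$.

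The essential geometric input is that $\sigma$ acts trivially on $\Br(\ol A)$: on each $H^1_{\et}(\ol A, \Z_\ell)$ it acts by $-1$, hence by $+1$ on $H^2_{\et}(\ol A, \Z_\ell(1)) = \bigwedge^2 H^1$, and the Kummer sequence realises $\Br(\ol A)[\ell^\infty]$ as a quotient of the latter. Hence $\Br(\tilde{\ol A})^\sigma = \Br(\tilde{\ol A}) = \Br(\ol A)$, and the composition
\begin{equation*}
  \pi^*: \Br(\ol X) \longrightarrow \Br(\tilde{\ol A}) \xrightarrow{(b^*)^{-1}} \Br(\ol A)
\end{equation*}
is the natural map whose bijectivity we have to establish. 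By the trace relations, it is already an isomorphism after inverting $2$.

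The main obstacle is the $2$-primary part, where pullback-pushforward is not enough. For this one argues directly in cohomology: the Leray spectral sequence for the finite flat map $\pi$, combined with Grothendieck's absolute purity for the Brauer group along the smooth ramification divisor formed by the sixteen exceptional curves (respectively the sixteen nodal curves on $X$), reduces the comparison to an equality of $2^n$-torsion in $H^2_{\et}(\ol X, \mu_{2^n})$ and $H^2_{\et}(\tilde{\ol A}, \mu_{2^n})^\sigma$. The cycle class of the ramification divisor is algebraic, so the discrepancy introduced by ramification is absorbed into the N\'eron-Severi part and disappears upon passing to the Brauer quotient. Assembling the $\ell$-primary pieces over all primes $\ell$ (with $\ell = 2$ handled as above and $\ell \ne 2$ by trace relations) and verifying $\Gamma$-equivariance of each map completes the proof; the delicate step is precisely the bookkeeping at $\ell = 2$ to ensure that the ramification contribution lies entirely in the algebraic part and hence does not affect $\Br(\ol X)$.
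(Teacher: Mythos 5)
Note first that the paper offers no proof of this statement; it simply cites \cite{SZ12}*{Prop.~1.3}. Your outline follows the same geometric reduction that underlies the cited proof: blow up at $A[2]$, compare along the degree-$2$ quotient $\pi : \tilde A \to X$, and use that $[-1]^*$ is the identity on $H^2(\ol A,\Z_\ell(1))$ so that the covering involution acts trivially on $\Br(\ol A)$. The reduction to $\ol k$-varieties via equivariance, the birational invariance along the blow-up $b$, and the prime-to-$2$ conclusion via the trace relations are all correct.

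The gap is exactly where you flag it, at $\ell=2$, but it is a genuine gap and not routine bookkeeping. The trace identities give that $\pi^*$ is surjective with kernel killed by $2$, but since $\Br(\ol X)\cong(\Q/\Z)^{6-\rho}$ and multiplication by $2$ on this group already has kernel the full $2$-torsion subgroup $(\Z/2)^{6-\rho}$, nothing in what you have written rules out a nonzero $\ker\pi^*$. The appeal to Leray and purity does not close this: since $\pi$ is finite, Leray only says $H^p(\ol X,\pi_*\mu_{2^n})\cong H^p(\tilde{\ol A},\mu_{2^n})$, and $\pi_*\mu_{2^n}$ is not the constant sheaf $\mu_{2^n}\oplus\mu_{2^n}$ near the ramification divisor, so the claimed ``equality of $2^n$-torsion in $H^2$'' does not follow; and Gabber--Grothendieck purity is about extending Brauer classes across closed subsets of codimension at least $2$, not about controlling ramification of a double cover along a divisor. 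To actually close the $\ell=2$ case one must use the explicit Kummer-lattice computation---the overlattice structure of $\pi_*H^2(\ol A,\Z)\oplus K$ inside $H^2(\ol X,\Z)$ as in \cite{LP80}*{Prop.~3.4--3.6}---to show the $2$-primary discrepancy between $H^2(\ol X,\mu_{2^n})$ and $H^2(\tilde{\ol A},\mu_{2^n})^\sigma$ sits entirely in $\NS(\ol X)/2^n$, or else defer to the argument in \cite{SZ12} as the paper does.
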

Hence there is an injection
\begin{displaymath}
\frac{\Br(X)}{\Br_1(X)} \hookrightarrow \Br(\ol{X})^{\Gamma} =
\Br(\ol{A})^\Gamma,
\end{displaymath}
where $\Gamma= \Gal(\bar{k}/k)$. Thus, to bound $\frac{\Br(X)}{\Br_1(X)}$ in terms of $k$, the Faltings' height $h(A)$, and  $\delta = \det (\NS(\ol{A}))$, we only need to bound $\Br(\bar{A})^\Gamma$.

Also we would like to recall the following important result about the geometric Brauer groups:

\begin{theorem}
\label{theorem: geometricBrauer}
As abelian groups, we have the following isomorphisms:
\begin{displaymath}
\Br(\ol{X}) \cong \Br(\ol{A}) \cong (\mathbb Q/\mathbb Z)^{6 -\rho},
\end{displaymath}
where $\rho = \rho(\ol{A})$ is the geometric N\'{e}ron--Severi rank of $A$.
\end{theorem}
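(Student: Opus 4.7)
The first isomorphism $\Br(\ol X) \cong \Br(\ol A)$ is exactly Theorem~\ref{thm_SZ1.3} of Skorobogatov--Zarhin cited just above, so it remains only to identify $\Br(\ol A)$ as an abstract abelian group. The plan is the standard $\ell$-adic computation via the Kummer sequence, applied one prime at a time and then assembled.

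First I would fix a prime $\ell$ and write down the Kummer sequence in \'etale cohomology:
\begin{displaymath}
0 \to \Pic(\ol A)/\ell^n \to \HH^2_{\et}(\ol A, \mu_{\ell^n}) \to \Br(\ol A)[\ell^n] \to 0.
\end{displaymath}
Because $\Pic^0(\ol A) = \dA(\ol k)$ is an $\ell$-divisible group, the quotient $\Pic(\ol A)/\ell^n$ coincides with $\NS(\ol A)/\ell^n$. Passing to the direct limit over $n$ (which is exact) produces
\begin{displaymath}
0 \to \NS(\ol A)\otimes \Q_\ell/\Z_\ell \to \HH^2_{\et}(\ol A, \Q_\ell/\Z_\ell(1)) \to \Br(\ol A)\{\ell\} \to 0.
\end{displaymath}

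Next I would invoke the standard fact that, for an abelian variety of dimension $g$, the $\ell$-adic cohomology is the exterior algebra on $\HH^1$; in particular $\HH^2_{\et}(\ol A, \Z_\ell(1)) = \bigwedge^2 \HH^1_{\et}(\ol A, \Z_\ell(1))$ is a free $\Z_\ell$-module of rank $\binom{2g}{2} = 6$ for $g=2$. Tensoring with $\Q_\ell/\Z_\ell$ gives $(\Q_\ell/\Z_\ell)^6$ on the middle term, while the left term is $(\Q_\ell/\Z_\ell)^\rho$ since $\NS(\ol A)$ is free abelian of rank $\rho$. The key step is then to verify that the cokernel really is cofree of corank $6-\rho$ and has no extra finite summand. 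This follows by Pontryagin duality: the displayed sequence is Pontryagin dual to a short exact sequence of finitely generated $\Z_\ell$-modules
\begin{displaymath}
0 \to T_\ell \Br(\ol A) \to \HH^2_{\et}(\ol A, \Z_\ell(1)) \to \Hom_{\Z_\ell}(\NS(\ol A)\otimes \Z_\ell, \Z_\ell) \to 0,
\end{displaymath}
and since the quotient on the right is a free $\Z_\ell$-module of rank $\rho$, the kernel $T_\ell \Br(\ol A)$ is a free $\Z_\ell$-module of rank $6-\rho$. Dually, $\Br(\ol A)\{\ell\} \cong (\Q_\ell/\Z_\ell)^{6-\rho}$.

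Finally, $\Br(\ol A)$ is a torsion abelian group because $\ol A$ is smooth and projective over an algebraically closed field (so $\Br(\ol A) = \HH^2_{\et}(\ol A, \G_m)$ is torsion, a well-known fact of Grothendieck), whence $\Br(\ol A) = \bigoplus_\ell \Br(\ol A)\{\ell\}$. Combining the $\ell$-local calculations yields
\begin{displaymath}
\Br(\ol A) \;\cong\; \bigoplus_\ell (\Q_\ell/\Z_\ell)^{6-\rho} \;\cong\; (\Q/\Z)^{6-\rho},
\end{displaymath}
as desired. The only delicate step is the one handled via Pontryagin duality: one must rule out a finite summand in $\Br(\ol A)\{\ell\}$, which amounts to knowing that $\HH^2_{\et}(\ol A, \Z_\ell(1))$ is \emph{torsion-free}; this is where we use the abelian-variety structure (via the exterior algebra description of $\HH^*(\ol A, \Z_\ell)$) rather than only the fact that $\ol A$ is a smooth projective surface.
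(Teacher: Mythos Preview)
Your argument is correct. The paper itself does not give a proof here at all: it simply cites the remark preceding \cite{SZ12}*{Lem.~1.1}, where exactly this Kummer-sequence computation is recorded. So rather than taking a different route, you have written out in full the standard calculation that the paper outsources to a reference.

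One small notational quibble: in the Pontryagin-dual sequence you label the left-hand term $T_\ell \Br(\ol A)$, but a priori the Pontryagin dual of $\Br(\ol A)\{\ell\}$ could differ from the Tate module by a finite piece --- which is precisely what you are trying to rule out. The argument itself is unaffected: the dual sequence exhibits $(\Br(\ol A)\{\ell\})^\vee$ as the kernel of a surjection $\Z_\ell^6 \twoheadrightarrow \Z_\ell^\rho$ of free $\Z_\ell$-modules, hence it is free of rank $6-\rho$ because $\Z_\ell$ is a PID, and dualizing back gives $(\Q_\ell/\Z_\ell)^{6-\rho}$. It would be cleaner to phrase it that way and only afterwards observe that the dual therefore coincides with $T_\ell\Br(\ol A)$.
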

\begin{proof}
This follows from the remark before \cite{SZ12}*{Lem.~1.1}.
\end{proof}

We discuss several lemmas to prove our main Theorem ~\ref{theorem: effectivebounds}:

\begin{lemma}
There exists an effectively computable integer $M_2$ depending on $k$, $h(A)$, and $\delta = \det (\NS(\ol{A}))$ such that for any prime number $\ell > M_2$ we have
\begin{displaymath}
\Br(\ol{A})^\Gamma_\ell = \{0\},
\end{displaymath}
where $\Br(\ol{A})^\Gamma_\ell$ denotes the $\ell$-torsion group of $\Br(\ol{A})^\Gamma$.
\end{lemma}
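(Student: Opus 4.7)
The plan is to exploit the Kummer long exact sequence displayed in the introduction (with $n=1$), rewritten as
\begin{equation*}
0 \to \coker f_\ell \to \Br(\ol A)^\Gamma_\ell \to \ker g_\ell \to 0,
\end{equation*}
together with the effective Faltings estimates of Section~\ref{Effective_Faltings}. It is enough to produce an effectively computable integer $M_2$, depending only on $k$, $h(A)$ and $\delta$, such that $\coker f_\ell = 0$ and $\ker g_\ell = 0$ for every prime $\ell > M_2$.

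By Remark~\ref{rmk_bc} I first replace $k$ by the finite extension $k'/k$ over which $\Gamma$ acts trivially on $\NS(\ol A)$. The degree $[k':k]$ is at most the order of the image of $\Gamma$ inside $\mathrm{O}(\NS(\ol A))$, which is bounded explicitly in terms of $\rho\leq 4$ and $\delta$ via Minkowski-type bounds on finite subgroups of $\GL_\rho(\Z)$ preserving a form of determinant $\pm\delta$. After this reduction $\NS(\ol A)/\ell$ is a trivial $\Gamma_{k'}$-module, and Theorem~\ref{effFal} (or, in the cases where $A$ is not geometrically simple, the analogous bounds from Subsections~\ref{prodEC}--\ref{AtoEC} applied to the factors of $A$) supplies a single effectively computable integer $M = M(h(A),[k':\Q])$ killing the cokernel of
\begin{equation*}
\End_{k'}(A) \otimes \Z/\ell^n \longrightarrow \End_{\Gamma_{k'}}(A_{\ell^n})
\end{equation*}
uniformly in the prime $\ell$ and the exponent $n \geq 1$.

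For the vanishing of $\coker f_\ell$, the principal polarization combined with the Weil pairing furnishes the $\Gamma$-equivariant identifications
\begin{equation*}
H^2_\et(\ol A,\mu_{\ell^n}) \;\cong\; \bigwedge\nolimits^{2}\Hom(A_{\ell^n},\mu_{\ell^n}) \;\cong\; \End^\dagger(A_{\ell^n}),
\end{equation*}
where $\dagger$ is the Rosati involution; under these $H^2_\et(\ol A,\mu_{\ell^n})^{\Gamma_{k'}}$ becomes the symmetric part of $\End_{\Gamma_{k'}}(A_{\ell^n})$, and $\NS(\ol A)/\ell^n$ is the symmetric part of $\End_{k'}(A)\otimes \Z/\ell^n$ via Mumford's isomorphism $\NS(\ol A) \cong \End^\dagger_{\ol k}(A)$ (\cite{MumAV}*{p.~208}). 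Restricting the Faltings bound to symmetric parts shows that the same $M$ annihilates $\coker f_\ell$; consequently $\coker f_\ell = 0$ whenever $\ell > M$.

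For the vanishing of $\ker g_\ell$, I combine the above with Faltings' theorem, which supplies both the equality $H^2_\et(\ol A,\Q_\ell(1))^{\Gamma_{k'}} = \NS(\ol A)\otimes \Q_\ell$ and the semisimplicity of $H^2_\et(\ol A,\Q_\ell(1))$ as a $\Q_\ell[\Gamma_{k'}]$-module. A $\Gamma_{k'}$-stable rational complement of $\NS(\ol A)\otimes \Q_\ell$ in $H^2_\et(\ol A,\Q_\ell(1))$ therefore exists, and Theorem~\ref{effFal} applied once more to the $\Gamma_{k'}$-equivariant endomorphism ring of $H^2_\et(\ol A,\Z_\ell(1))$ allows denominators to be cleared: the resulting $\Gamma_{k'}$-equivariant projector onto $\NS(\ol A)\otimes \Z_\ell$ has denominator dividing $M$. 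For $\ell > M$ its mod-$\ell$ reduction splits the Kummer extension $0 \to \NS(\ol A)/\ell \to H^2_\et(\ol A,\mu_\ell) \to \Br(\ol A)_\ell \to 0$ $\Gamma_{k'}$-equivariantly, forcing $g_\ell$ to be injective and hence $\ker g_\ell = 0$. Taking $M_2 := M$ then completes the proof. The main obstacle is this last step, namely producing an integral $\Gamma_{k'}$-equivariant projector with denominator bounded by $M$, and this is precisely where the effectivity in Theorem~\ref{effFal} is indispensable.
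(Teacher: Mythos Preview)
Your overall strategy---reduce to showing $\coker f_\ell=0$ and $\ker g_\ell=0$ for $\ell$ large---matches the paper's. Your treatment of $\coker f_\ell$ via the Rosati-symmetric identification is essentially the content of \cite{SZ08}*{Lem.~3.5}, which the paper simply cites; so that half is fine.

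The weak point is your argument for $\ker g_\ell=0$. You invoke semisimplicity to obtain a $\Q_\ell$-projector onto $\NS(\ol A)\otimes\Q_\ell$ and then assert that ``Theorem~\ref{effFal} applied once more to the $\Gamma_{k'}$-equivariant endomorphism ring of $H^2_\et(\ol A,\Z_\ell(1))$'' bounds its denominator by $M$. But Theorem~\ref{effFal} controls $\End_k(A)\to\End_\Gamma(A_{\ell^n})$, i.e.\ endomorphisms of $H^1$; it says nothing directly about $\End_{\Z_\ell[\Gamma]}(H^2_\et(\ol A,\Z_\ell(1)))$, and the passage from one to the other is not explained. As written this step is a gap.

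The paper avoids this entirely by a purely lattice-theoretic observation (\cite{SZ08}*{Prop.~2.5(a)}): for $\ell\nmid\delta$ the sublattice $\NS(\ol A)\otimes\Z_\ell$ is already an orthogonal direct summand of $H^2_\et(\ol A,\Z_\ell(1))$, the complement being the transcendental lattice $T_{A,\ell}$. Since the cup-product pairing is $\Gamma$-equivariant, this splitting is $\Gamma$-equivariant, and reducing mod $\ell$ immediately gives the injectivity of $g_\ell$. No semisimplicity and no second appeal to effective Faltings are needed here; the dependence on $\delta$ in the statement of the lemma is exactly what makes this work. Your preliminary base change to $k'$ is likewise unnecessary for this lemma (the paper postpones that step to the later lemmas).
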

\begin{proof}
This essentially follows from results in \cite{SZ08} combined with Theorem~\ref{effFal}. The following exact sequence occurs as the $n=1$ case of \cite{SZ08}*{p.~486 (5)}:
\begin{align*}
0 &\to \left(\NS(\ol{A})/\ell\right)^\Gamma \stackrel{f}{\to} \HH^2_\et(\ol{A},\mu_{\ell})^\Gamma \to \Br(\ol{A})_{\ell}^\Gamma\to \\
&\to \HH^1(\Gamma, \NS(\ol{A})/\ell) \stackrel{g}{\to} \HH^1(\Gamma,\HH^2_\et(\ol{A},\mu_{\ell})).
\end{align*}
The discussion in \cite{SZ08}*{Prop.~2.5 (a)} shows that $\NS(\ol{A}) \otimes \mathbb Z_\ell$ is a direct summand of $\HH^2_\et(\ol{A}, \mathbb Z_\ell (1))$ for any prime $\ell \nmid \delta$. For such $\ell$, the homomorphism $g$ in the above exact sequence is injective.

Next, Theorem~\ref{effFal} asserts that there exists an effectively computable integer $C>0$ depending on $k$ and $h(A)$ such that for any prime $\ell \nmid C$, we have an isomorphism:
\begin{displaymath}
\End_k(A)/\ell \cong \End_\Gamma(A_\ell, A_\ell).
\end{displaymath}
The discussion in \cite{SZ08}*{Lem.~3.5} shows that for such $\ell$, the homomorphism $f$ is bijective. Thus our assertion follows.
\end{proof}

Thus, to prove our main theorem, we need to bound $\Br(\ol{A})^\Gamma(\ell)$ for each prime number $\ell$ where $\Br(\ol{A})^\Gamma(\ell)$ denotes the $\ell$-primary subgroup of elements whose orders are powers of $\ell$. To achieve this task, we employ techniques from \cite{HKT13} Sections 7 and 8. 

We fix an embedding $k \hookrightarrow \mathbb C$ and consider the following lattice:
\begin{displaymath}
\HH^2(A(\mathbb C), \mathbb Z).
\end{displaymath}
It contains $\NS(\ol{A})$ as a primitive sublattice and we denote its orthogonal complement by $T_A = \langle\NS(\ol{A}) \rangle ^\perp_{\HH^2(A(\mathbb C), \mathbb Z)}$ and call it the transcendental lattice of $A$.
The direct sum $\NS(\ol{A}) \oplus T_A$ is a full rank sublattice of $\HH^2(A(\mathbb C), \mathbb Z)$ and we can put it into the exact sequence:
\begin{displaymath}
0 \rightarrow \NS(\ol{A}) \oplus T_A \rightarrow \HH^2(A(\mathbb C), \mathbb Z) \rightarrow K \rightarrow 0,
\end{displaymath}
where $K$ is a finite abelian group of order $\delta = \det (\NS(\ol{A}))$. Tensoring with $\mathbb Z_\ell$ and using a comparison theorem for the different cohomologies, we have
\begin{displaymath}
0 \rightarrow \NS(\ol{A})_\ell \oplus T_{A,\ell} \rightarrow \HH_{\et}^2(\ol{A}, \mathbb Z_\ell(1)) \rightarrow K _\ell \rightarrow 0,
\end{displaymath}
where $\NS(\ol{A})_\ell = \NS(\ol{A})\otimes \mathbb Z_\ell$, $T_{A,\ell} = T_A \otimes \mathbb Z_\ell$, and $K_\ell$ is the $\ell$-primary part of $K$. The second \'etale cohomology $\HH_{\et}^2(\ol{A}, \mathbb Z_\ell(1))$ comes with a natural pairing which is compatible with $\Gamma$-action, and $T_{S,\ell}$ is the orthogonal complement of $\NS(\ol{A})_\ell$. In particular, $T_{A,\ell}$ has a natural structure as a Galois module.

\begin{lemma}
\label{lemma: transcendental}
For each prime number $\ell$, there is an effectively computable constant $M_3$ depending on $k$, and $h(A)$ such that for each integer $n\geq 1$ the bound
\begin{displaymath}
\# (T_A/\ell^n)^\Gamma \leq \ell^{M_3}
\end{displaymath}
is satisfied.
\end{lemma}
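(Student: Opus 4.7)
The plan is to reduce the statement to an effective form of the Tate conjecture for divisors on $A$ and then invoke Theorem~\ref{effFal}. First I would observe that the natural inclusion $T_A/\ell^n\hookrightarrow T_A\otimes\Q_\ell/\Z_\ell$ is $\Gamma$-equivariant, so it suffices to bound $(T_A\otimes\Q_\ell/\Z_\ell)^\Gamma$ effectively by a constant independent of $n$. By Faltings' theorem (the Tate conjecture for divisors on abelian surfaces), $(T_A\otimes\Q_\ell)^\Gamma=0$, which forces $(T_A\otimes\Q_\ell/\Z_\ell)^\Gamma$ to be finite; the task is to make this finiteness effective.

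Next I would transport the problem to the cohomology of $\ol{A}$. The exact sequence
\[0\to\NS(\ol{A})_\ell\oplus T_{A,\ell}\to\HH^2_\et(\ol{A},\Z_\ell(1))\to K_\ell\to 0\]
recalled just before the lemma, with $K_\ell$ finite of order dividing $\delta$, induces after tensoring with $\Q_\ell/\Z_\ell$ and quotienting by $\NS(\ol{A})\otimes\Q_\ell/\Z_\ell$ a $\Gamma$-equivariant comparison between $T_{A,\ell}\otimes\Q_\ell/\Z_\ell$ and the quotient $\HH^2_\et(\ol{A},\Q_\ell/\Z_\ell(1))\big/\NS(\ol{A})\otimes\Q_\ell/\Z_\ell$, whose kernel and cokernel both have order dividing $\delta$. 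Hence it suffices to bound the Galois invariants of this quotient, at the cost of a factor at most $\delta$.

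At each finite level $n$, the Galois invariants of the quotient are controlled by $\coker(f_n)$ together with a kernel in $\HH^1(\Gamma,\NS(\ol{A})/\ell^n)$ coming from the long exact sequence displayed in the introduction. Following the argument of \cite{SZ08}*{Lem.~3.5} adapted to the effective setting, Theorem~\ref{effFal} provides an explicit integer (a power of $M$) annihilating $\coker(f_n)$ uniformly in $n$ and $\ell$. Since $\HH^2_\et(\ol{A},\mu_{\ell^n})$ is free of rank $6$ over $\Z/\ell^n$, any subgroup annihilated by an integer $N$ has order at most $\gcd(\ell^n,N)^{6}\leq N^{6}$. Combining this with the $\delta$-factor from the previous step, I obtain $|(T_A/\ell^n)^\Gamma|\leq\delta\cdot N^{6}$, which can be rewritten as $\ell^{M_3}$ for a suitable $M_3$ depending only on $k$, $h(A)$, and $\delta$.

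The main obstacle is adapting \cite{SZ08}*{Lem.~3.5} to the effective setting: the original argument concludes only that $\coker(f_n)$ vanishes when $\End_k(A)\otimes\Z_\ell\to\End_\Gamma(T_\ell A)$ is an isomorphism, whereas here I must extract an explicit annihilator of $\coker(f_n)$ from the Faltings bound $M$ given by Theorem~\ref{effFal}, and carefully track the finite discrepancies introduced by $K_\ell$ throughout the identification of $T_{A,\ell}\otimes\Q_\ell/\Z_\ell$ inside $\HH^2_\et(\ol{A},\Q_\ell/\Z_\ell(1))$.
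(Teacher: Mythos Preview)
Your route through $\coker(f_n)$ is circuitous and, in the paper's logic, actually runs backwards: it is precisely the bound on $(T_A/\ell^n)^\Gamma$ established in this lemma that is later used to bound $\coker(f_n)$ and $\ker(g_n)$, not the other way around. The ``main obstacle'' you flag---extracting from Theorem~\ref{effFal} an explicit annihilator of $\coker(f_n)$ via an effective version of \cite{SZ08}*{Lem.~3.5}---would, if carried out, amount to the paper's direct argument, only wrapped inside unnecessary comparisons with $K_\ell$ and $\delta$ (and note the lemma as stated does not allow dependence on $\delta$).

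The paper bypasses all of this with a single clean embedding. Since $A$ is principally polarized, $\HH^1_\et(\ol A,\Z_\ell(1))\cong T_\ell(A)$, so
\[
T_{A,\ell}\hookrightarrow \HH^2_\et(\ol A,\Z_\ell(1))=\textstyle\bigwedge^2\HH^1_\et(\ol A,\Z_\ell(1))\hookrightarrow \HH^1\otimes\HH^1\cong \End(T_\ell(A)).
\]
Reducing mod $\ell^n$ gives a $\Gamma$-equivariant injection $T_A/\ell^n\hookrightarrow \End(\ol A[\ell^n])$, hence a map
\[
\Phi\colon (T_A/\ell^n)^\Gamma\hookrightarrow \End_\Gamma(\ol A[\ell^n])\longrightarrow \End_\Gamma(\ol A[\ell^n])/\End(A).
\]
The composite $\Phi$ is injective because $T_A$ is the transcendental lattice and therefore meets the image of $\End(A)$ (the algebraic part of $\HH^2$) trivially. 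The target has order bounded, uniformly in $n$ and $\ell$, by Theorem~\ref{effFal}. This gives the bound depending only on $k$ and $h(A)$, with no $\delta$ and no detour through the five-term sequence.
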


\begin{proof}
Since $A$ is principally polarized, we have a natural isomorphism of Galois modules:
\begin{displaymath}
\HH_\et^1(\ol{A}, \mathbb Z_\ell(1)) \cong (\HH_\et^1(\ol{A}, \mathbb Z_\ell(1)))^* \cong T_\ell(A),
\end{displaymath}
where $T_\ell(A)$ is the Tate module of $A$. Hence we have
\begin{displaymath}
T_{A,\ell} \hookrightarrow \HH_\et^2(\ol{A}, \mathbb Z_\ell(1)) = \largewedge^2 \HH_\et^1(\ol{A}, \mathbb Z_\ell(1)) \hookrightarrow \HH_\et^1(\ol{A}, \mathbb Z_\ell(1)) \otimes \HH_\et^1(\ol{A}, \mathbb Z_\ell(1)) \cong \End(T_\ell(A)).
\end{displaymath}
Thus we have
\begin{displaymath}
(T_A/\ell^n) = (T_{A,\ell}/\ell^n) \hookrightarrow \End(T_\ell(A))/\ell^n = \End(\ol{A}[\ell^n]).
\end{displaymath}
Hence we obtain a homomorphism
\begin{displaymath}
\Phi : (T_A/\ell^n)^\Gamma \hookrightarrow \End_\Gamma (\ol{A}[\ell^n]) \rightarrow \End_\Gamma (\ol{A}[\ell^n])/\End(A).
\end{displaymath}
This composite homomorphism $\Phi$ must be injective because $T_A$ is the transcendental lattice which does not meet the algebraic part $\End(A)$. The order of this quotient is bounded by Theorem~\ref{effFal}.
\end{proof}

Taking a finite extension of $k$ only increases the size of $\Br(\ol{A})^{\Gal(\bar{k}/k')}$, so from now on we assume that the Galois action on the N\'eron--Severi space $\NS(\ol{A})$ is trivial. This is automatically true when the geometric N\'{e}ron--Severi rank of $A$ is $1$.

\begin{lemma}
Suppose that the Galois action on $\NS(\ol{A})$ is trivial.
Then there is an effectively computable constant $M_4$ depending on $k$, $h(A)$, and $\delta$ such that for each prime $\ell$, we have
\begin{displaymath}
\# \Br(\ol{A})^\Gamma(\ell) \leq \ell^{M_4}.
\end{displaymath}
\end{lemma}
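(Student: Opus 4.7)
The plan is to combine Lemma~\ref{lemma: transcendental} on the transcendental lattice with the Kummer exact sequence, thereby reducing everything to understanding the finite discrepancy between $\HH^2(A(\mathbb{C}),\mathbb{Z})$ and its full-rank sublattice $\NS(\ol A) \oplus T_A$. First I would note that $\Br(\ol A)^\Gamma(\ell) = \bigcup_n \Br(\ol A)_{\ell^n}^\Gamma$ is an increasing union of finite subgroups, so it suffices to produce a bound on $\#\Br(\ol A)_{\ell^n}^\Gamma$ that is uniform in $n$. Applying the Kummer sequence to $\ol A$ (and using that $\Pic^0(\ol A)$ is $\ell$-divisible, so $\Pic(\ol A)/\ell^n = \NS(\ol A)/\ell^n$), we obtain the short exact sequence of $\Gamma$-modules
\begin{displaymath}
0 \to \NS(\ol A)/\ell^n \to \HH^2_\et(\ol A, \mu_{\ell^n}) \to \Br(\ol A)_{\ell^n} \to 0,
\end{displaymath}
which identifies $\Br(\ol A)_{\ell^n}$ with the cokernel of the Chern class map.

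Next I would exploit the Hodge-theoretic decomposition from Section~\ref{subsec: general case}. Writing $L = \HH^2(A(\mathbb{C}), \mathbb{Z})$ (identified with $\HH^2_\et(\ol A, \mathbb{Z}_\ell(1))$ via the Betti--\'etale comparison, together with the resulting $\Gamma$-action) and $N = \NS(\ol A)$, the inclusion $N \oplus T_A \hookrightarrow L$ has cokernel of order $\delta$. Hence $T_A \hookrightarrow L/N$ with finite cokernel $K'$ whose order divides $\delta$. Tensoring $0 \to T_A \to L/N \to K' \to 0$ with $\mathbb{Z}/\ell^n$ and using the Kummer identification $\Br(\ol A)_{\ell^n} \cong (L/N)/\ell^n$ as $\Gamma$-modules, one obtains a four-term exact sequence
\begin{displaymath}
0 \to K'[\ell^n] \to T_A/\ell^n \to \Br(\ol A)_{\ell^n} \to K'/\ell^n \to 0
\end{displaymath}
of $\Gamma$-modules. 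Splitting this into two short exact sequences and applying the left-exact functor $(-)^\Gamma$ yields successively $\#X^\Gamma \leq \#(T_A/\ell^n)^\Gamma$ for $X$ the image of $T_A/\ell^n$ in $\Br(\ol A)_{\ell^n}$, and then
\begin{displaymath}
\#\Br(\ol A)_{\ell^n}^\Gamma \leq \#X^\Gamma \cdot \#(K'/\ell^n)^\Gamma \leq \ell^{M_3} \cdot \delta,
\end{displaymath}
where the final inequality invokes Lemma~\ref{lemma: transcendental} for the first factor and the obvious bound $\#K' \leq \delta$ for the second. Since $\delta \leq \ell^{\lceil \log_2 \delta \rceil}$ for every prime $\ell$, setting $M_4 := M_3 + \lceil \log_2 \delta \rceil$ produces the desired inequality $\#\Br(\ol A)_{\ell^n}^\Gamma \leq \ell^{M_4}$ uniformly in $n$, and the bound descends to $\Br(\ol A)^\Gamma(\ell)$.

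The main bookkeeping concern is ensuring that the Galois actions on $T_A$ and on $L/N$ remain compatible with the action on $\HH^2_\et(\ol A, \mu_{\ell^n})$ modulo the appropriate Tate twist, so that Lemma~\ref{lemma: transcendental} may legitimately be invoked inside the four-term exact sequence above; this is a standard consequence of the Betti--\'etale comparison and the fact that the Kummer identification is naturally $\Gamma$-equivariant. A subtler point is uniformity in $\ell$: even at the primes $\ell \mid \delta$, where the torsion phenomena in $K'$ interfere with a clean identification $\Br(\ol A)_{\ell^n} \cong T_A/\ell^n$, the bound $\#K' \leq \delta$ is still independent of $\ell$ and $n$, so absorbing this contribution into the exponent produces a single constant $M_4 = M_4(k, h(A), \delta)$ that works for every prime simultaneously.
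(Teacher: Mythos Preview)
Your overall strategy---identifying $\Br(\ol A)_{\ell^n}$ with $(L/N)/\ell^n$ and comparing it directly to $T_A/\ell^n$---is more streamlined than the paper's route through the five-term sequence of \cite{SZ08}, but one step contains a genuine gap. From the short exact sequence $0\to K'[\ell^n]\to T_A/\ell^n\to X\to 0$ you claim $\#X^\Gamma\le\#(T_A/\ell^n)^\Gamma$, attributing this to left-exactness of $(-)^\Gamma$. Left-exactness, however, controls \emph{injections}, not surjections: it yields only an injection $(T_A/\ell^n)^\Gamma/K'[\ell^n]^\Gamma\hookrightarrow X^\Gamma$, and the cokernel embeds into $\HH^1(\Gamma,K'[\ell^n])=\Hom(\Gamma,K'[\ell^n])$, which is infinite whenever $K'[\ell^n]\neq 0$. (Your use of left-exactness on the \emph{other} short exact sequence, giving $\#\Br(\ol A)_{\ell^n}^\Gamma\le\#X^\Gamma\cdot\#(K'/\ell^n)$, is fine.) The inequality you want is simply false in general, even when the kernel has trivial $\Gamma$-action: let $\Gamma$ be free profinite on $\sigma,\tau$, take $B=(\Z/\ell)^3$ with
\[
\sigma=\begin{pmatrix}1&1&0\\0&1&0\\0&0&1\end{pmatrix},\qquad
\tau=\begin{pmatrix}1&0&1\\0&1&0\\0&0&1\end{pmatrix},
\]
and $A=\langle e_1\rangle$. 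Then $\Gamma$ acts trivially on $A$, one computes $\#B^\Gamma=\ell$, yet $\#(B/A)^\Gamma=\ell^2$.

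The fix is to bound \emph{exponents} rather than cardinalities, and then invoke Theorem~\ref{theorem: geometricBrauer} to convert back. Since $K'[\ell^n]$ is killed by $\delta$, any lift $t\in T_A/\ell^n$ of $x\in X^\Gamma$ satisfies $\delta(\gamma t-t)=0$ for all $\gamma$, so $\delta t\in(T_A/\ell^n)^\Gamma$; hence $\delta x$ lies in a subgroup of order at most $\ell^{M_3}$. Combined with the other sequence, every $b\in\Br(\ol A)_{\ell^n}^\Gamma$ satisfies $\delta^2\ell^{M_3}b=0$, and since $\Br(\ol A)(\ell)\cong(\Q_\ell/\Z_\ell)^{6-\rho}$ this gives $\#\Br(\ol A)^\Gamma(\ell)\le\ell^{(6-\rho)(M_3+2v_\ell(\delta))}$. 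This is precisely how the paper proceeds, though organized differently: it explicitly observes that $\HH^1(\Gamma,\NS(\ol A)/\ell^n)=\Hom(\Gamma,\NS(\ol A)/\ell^n)$ is infinite, and therefore restricts itself to bounding the \emph{orders of elements} in $\coker(f_n)$ and $\ker(g_n)$, appealing to Theorem~\ref{theorem: geometricBrauer} at the outset to justify that this suffices.
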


\begin{proof}
Recall the exact sequence of \cite{SZ08}*{p.~486 (5)}:
\begin{align*}
\begin{split}
0 &\to \left(\NS(\ol{A})/\ell^n\right)^\Gamma \stackrel{f_n}{\to} \HH^2_\et(\ol{A},\mu_{\ell^n})^\Gamma \to \Br(\ol{A})_{\ell}^\Gamma\to \\
&\to \HH^1(\Gamma, \NS(\ol{A})/\ell^n) \stackrel{g_n}{\to} \HH^1(\Gamma,\HH^2_\et(\ol{A},\mu_{\ell^n})),
\end{split}
\end{align*}
so we need to bound the cokernel of $f_n$ and the kernel of $g_n$ independent of $n$. Note that the kernel of $g_n$ was overlooked in \cite{HKT13}. The group $\HH^1(\Gamma, \NS(\ol{A})/\ell^n)$ is equal to $\Hom(\Gamma, \NS(\ol{A})/\ell^n)$ because by assumption the Galois action is trivial. This group is infinite, so it requires a careful analysis. By Theorem~\ref{theorem: geometricBrauer}, it is enough to bound the orders of elements in $\coker(f_n)$ as well as $\ker(g_n)$ independently of $n$.

Let $\ell^m$ be the order of $K_\ell$ and we assume that $n \geq m$. We have the following exact sequence:
\begin{displaymath}
0 \rightarrow \NS(\ol{A})_\ell \oplus T_{A,\ell} \rightarrow \HH_{\et}^2(\ol{A}, \mathbb Z_\ell(1)) \rightarrow K _\ell \rightarrow 0.
\end{displaymath}
Tensoring by $\Z/\ell^n\Z$ (as $\Z_\ell$-modules) and using $\Tor$ functors, we obtain a four term exact sequence:
\begin{equation}\label{seq2}
0 \rightarrow K_\ell \rightarrow \NS(\ol{A})/\ell^n \oplus T_A/\ell^n \rightarrow \HH_\et^2(\ol{A}, \mu_{\ell^n}) \rightarrow K_\ell \rightarrow 0,
\end{equation}
where we've used that the middle term $\HH_{\et}^2(\ol{A}, \mathbb Z_\ell(1))$ is a free (and hence flat) $\Z_\ell$-module.

Note that the projection
\begin{displaymath}
K_\ell \rightarrow \NS(\ol{A})/\ell^n
\end{displaymath}
is injective because $T_A/\ell^n \rightarrow
\HH^2(\ol{A}, \mu_{\ell^n})$ is injective.
In particular, the Galois action on $K_\ell$ is trivial. We split the exact sequence (\ref{seq2}) as
\begin{displaymath}
0 \rightarrow K_\ell \rightarrow \NS(\ol{A})/\ell^n \oplus T_A/\ell^n \rightarrow C \rightarrow 0,
\end{displaymath}
and
\begin{displaymath}
0 \rightarrow C \rightarrow \HH_\et^2(\ol{A}, \mu_{\ell^n}) \rightarrow K_\ell \rightarrow 0.
\end{displaymath}
These gives us the long exact sequences
\begin{displaymath}
0 \rightarrow K_\ell \rightarrow \NS(\ol{A})/\ell^n \oplus (T_A/\ell^n)^\Gamma \rightarrow C^\Gamma \rightarrow \Hom(\Gamma, K_\ell) \rightarrow \Hom(\Gamma, \NS(\ol{A})/\ell^n)
\oplus \HH^1(\Gamma, T_A/\ell^n),
\end{displaymath}
and
\begin{displaymath}
0 \rightarrow C^\Gamma \rightarrow \HH_\et^2(\ol{A}, \mu_{\ell^n})^\Gamma \rightarrow K_\ell \rightarrow \HH^1(\Gamma, C)\rightarrow \HH^1(\Gamma, \HH_\et^2(\ol{A}, \mu_{\ell^n})).
\end{displaymath}
The map $\Hom(\Gamma, K_\ell) \rightarrow \Hom(\Gamma, \NS(\ol{A})/\ell^n)$ is injective, so the sequence
\begin{displaymath}
0 \rightarrow K_\ell \rightarrow \NS(\ol{A})/\ell^n \oplus (T_A/\ell^n)^\Gamma \rightarrow C^\Gamma \rightarrow 0,
\end{displaymath}
is exact. We conclude that
\begin{displaymath}
\#\coker(f_n) = \frac{\#\HH_\et^2(\ol{A}, \mu_{\ell^n})^\Gamma}{\#\NS(\ol{A})/\ell^n} \leq \frac{\#K_\ell \cdot \$C^\Gamma}{\#\NS(\ol{A})/\ell^n} = \#(T_A/\ell^n)^\Gamma
\end{displaymath}
is bounded independent of $n$ by application of Lemma~\ref{lemma: transcendental}.

Next we discuss a uniform bound on the maximum order of elements in $\ker(g_n)$. The homomorphism $g_n$ is a composition of two homomorphisms:
\begin{displaymath}
\HH^1(\Gamma, \NS(\ol{A})/\ell^n) \rightarrow \HH^1(\Gamma, C) \rightarrow \HH^1(\Gamma, \HH_\et^2(\ol{A}, \mu_{\ell^n})).
\end{displaymath}
The kernel of $\HH^1(\Gamma, C) \rightarrow \HH^1(\Gamma, \HH_\et^2(\ol{A}, \mu_{\ell^n}))$ is bounded by $K_\ell$. We have the exact sequence
\begin{displaymath}
0 \rightarrow \NS(\ol{A})/\ell^n \rightarrow C \rightarrow C/\NS(\ol{A})\rightarrow 0,
\end{displaymath}
which gives the long exact sequence
\begin{displaymath}
0 \rightarrow \NS(\ol{A})/\ell^n \rightarrow C^\Gamma \rightarrow (C/\NS(\ol{A}))^\Gamma \rightarrow \HH^1(\Gamma, \NS(\ol{A})/\ell^n) \rightarrow \HH^1(\Gamma, C).
\end{displaymath}
Thus to finish the proof we need to find an uniform bound for the maximum order of elements in $(C/\NS(\ol{A}))^\Gamma$. To obtain this, we look at the exact sequence
\begin{displaymath}
0 \rightarrow K_\ell \rightarrow T_A/\ell^n \rightarrow C/\NS(\ol{A}) \rightarrow 0.
\end{displaymath}

This gives us the long exact sequence
\begin{displaymath}
0 \rightarrow K_\ell \rightarrow (T_A/\ell^n)^\Gamma \rightarrow (C/\NS(\ol{A}))^\Gamma \rightarrow \Hom(\Gamma, K_\ell).
\end{displaymath}
Note that the group $\Hom(\Gamma, K_\ell)$ is killed by $\#K_\ell$. Finally, $\#(T_A/\ell^n)^\Gamma$ is uniformly bounded by the result of Lemma~\ref{lemma: transcendental}. Therefore the maximum order of elements in $(C/\NS(\ol{A}))^\Gamma$ is uniformly bounded and our assertion follows.
\end{proof}

\subsection{Better bounds for Kummer surfaces from non-simple abelian surfaces}
In this subsection, we always assume that $A$ is not geometrically simple. We may assume that after passing to a finite extension of $k$, the principally polarized abelian surface $A$ is isogenous to $E\times E'$, where $E$ and $E'$ are elliptic curves and furthermore we may assume that they are non-isogenous over $\kbar$ if $E\neq E'$. We use the work of \cite{SZ12} and \cite{New} and results in Section \ref{prodEC} to obtain an upper bound of the transcendental part of $\Br(\Kum(A))$. We use $X$ and $Y$ to denote $\Kum(A)$ and $\Kum(E\times E')$ respectively.  

\begin{lemma}\label{IsogBr}
If $s:E\times E'\rightarrow A$ is an isogeny of degree $d$, then the kernel of the $\Gamma$-invariant map $s^*:\Br(\Abar)\rightarrow \Br(\Ebar\times\Epbar)$ is killed by $d$. In particular,
\begin{displaymath}
\#\frac{\Br(X)}{\Br_1(X)}\leq \#\Br(\Xbar)^\Gamma\leq d^{6-\bar{r}}\# \Br(\Ybar)^\Gamma,
\end{displaymath}
where $\bar{r}$ is the N\'eron--Severi rank of $\Abar$. Moreover, we have
\begin{displaymath}
\#\frac{\Br(X)}{\Br_1(X)}\leq d^{6-\bar{r}}\# \frac{\Br(E\times E')}{\Br_1(E\times E')}.
\end{displaymath}
\end{lemma}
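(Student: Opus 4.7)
The plan is to combine three inputs: the norm (trace) map on \'etale cohomology available for finite flat morphisms, the Skorobogatov--Zarhin identification $\Br(\Xbar)\cong\Br(\Abar)$ of Galois modules from Theorem~\ref{thm_SZ1.3}, and the low-degree Hochschild--Serre exact sequence for the varieties involved. The first claim will follow from the observation that $s$ is finite \'etale (since $\Char(k)=0$), so there is a norm map $N_s\colon s_*\G_m\to\G_m$ of \'etale sheaves on $\Abar$ whose composition with the adjunction unit $\G_m\to s_*\G_m$ is multiplication by $\deg(s)=d$. Because $s$ is finite, $s_*$ is exact on abelian \'etale sheaves, so taking $\HH^2$ produces a $\Gamma$-equivariant map $s_*\colon\Br(\EEpbar)\to\Br(\Abar)$ satisfying $s_*\circ s^*=d\cdot\id$; in particular $\ker s^*$ is annihilated by $d$.

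For the second displayed inequality, I combine the Hochschild--Serre injection $\Br(X)/\Br_1(X)\hookrightarrow\Br(\Xbar)^\Gamma$ with the natural $\Gamma$-equivariant isomorphism $\Br(\Xbar)\cong\Br(\Abar)$ of Theorem~\ref{thm_SZ1.3}. By Theorem~\ref{theorem: geometricBrauer}, $\Br(\Abar)\cong(\Q/\Z)^{6-\bar r}$, and hence its $d$-torsion subgroup has order exactly $d^{6-\bar r}$. The first claim places $\ker(s^*|_{\Br(\Abar)^\Gamma})$ inside $\Br(\Abar)[d]^\Gamma$, while its image lies in $\Br(\EEpbar)^\Gamma$, which Theorem~\ref{thm_SZ1.3} identifies with $\Br(\Ybar)^\Gamma$. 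Splitting $\Br(\Abar)^\Gamma$ as kernel times image then gives $\#\Br(\Abar)^\Gamma\leq d^{6-\bar r}\#\Br(\Ybar)^\Gamma$, and composing with the initial Hochschild--Serre injection yields the second displayed bound.

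For the ``moreover,'' I use that $E\times E'$ is an abelian variety over the number field $k$ admitting the identity as a $k$-rational point, so the low-degree Hochschild--Serre sequence yields an injection $\Br(E\times E')/\Br_1(E\times E')\hookrightarrow\Br(\EEpbar)^\Gamma$ whose cokernel maps into $\HH^3(k,\kbar^\times)$. Since $k$ is a number field, this cohomology group vanishes (a consequence of global class field theory), so the injection is an isomorphism. Using Theorem~\ref{thm_SZ1.3} to match $\#\Br(\Ybar)^\Gamma=\#\Br(\EEpbar)^\Gamma$ and substituting $\#\Br(\EEpbar)^\Gamma=\#\Br(E\times E')/\Br_1(E\times E')$ into the previous bound produces the final inequality.

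The principal obstacle is securing the sharp factor $d$, rather than $d^2$, in the first claim. A naive approach using a complementary isogeny $t\colon A\to E\times E'$ with $s\circ t=[d]_A$ and the functoriality of $\Br$ only forces annihilation by $d^2$, because $[d]^*$ acts on $\Br(\Abar)$ as the wedge square of its action on $\HH^1$, hence as multiplication by $d^2$. The norm map sidesteps this obstruction, and it is essential here that $s$ be finite and flat --- which is automatic in characteristic zero.
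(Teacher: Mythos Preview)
Your arguments for the first two assertions are fine and agree with the paper's: the norm--map argument is precisely the content of \cite{ISZ}*{Cor.~1.2}, and the passage to $\Gamma$-invariants together with Theorem~\ref{thm_SZ1.3} and Theorem~\ref{theorem: geometricBrauer} is exactly how the paper obtains the first displayed inequality.

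The gap is in your proof of the ``moreover.'' You claim that the Hochschild--Serre injection
\[
\Br(E\times E')/\Br_1(E\times E')\hookrightarrow \Br(\EEpbar)^\Gamma
\]
is an isomorphism because its cokernel injects into $\HH^3(k,\kbar^\times)=0$. This is not what the spectral sequence gives. In the Hochschild--Serre spectral sequence $E_2^{p,q}=\HH^p(\Gamma,\HH^q_{\et}(\EEpbar,\G_m))\Rightarrow \HH^{p+q}_{\et}(E\times E',\G_m)$, the image $\Br(E\times E')/\Br_1(E\times E')$ is identified with $E_\infty^{0,2}$, and the first obstruction to $E_\infty^{0,2}=E_2^{0,2}$ is the differential
\[
d_2\colon \Br(\EEpbar)^\Gamma \longrightarrow \HH^2(\Gamma,\Pic(\EEpbar)).
\]
The presence of a $k$-rational point (or the vanishing of $\HH^3(k,\kbar^\times)$) kills the $d_3$ landing in $E_3^{3,0}$, but it does \emph{not} kill this $d_2$. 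For an abelian variety $\Pic(\EEpbar)$ is an extension of $\NS(\EEpbar)$ by the divisible group $(\widehat{E}\times\widehat{E'})(\kbar)$, and there is no reason for $\HH^2$ of either piece, or the differential into it, to vanish. In fact, examples are known where the injection $\Br(A)/\Br_1(A)\hookrightarrow \Br(\Abar)^\Gamma$ is strict. Since you need the \emph{reverse} inequality $\#\Br(\EEpbar)^\Gamma\leq \#\Br(E\times E')/\Br_1(E\times E')$ to substitute, your argument breaks down here.

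The paper sidesteps this entirely: it applies $s^*$ already at the level of $k$-schemes, obtaining a map
\[
s^*\colon \Br(A)/\Br_1(A)\longrightarrow \Br(E\times E')/\Br_1(E\times E')
\]
whose kernel injects (via the commutative square with the geometric Brauer groups) into $\ker\bigl(s^*\colon\Br(\Abar)\to\Br(\EEpbar)\bigr)$, hence has order at most $d^{6-\bar r}$. It then invokes \cite{SZ12}*{Thm.~2.4} for an injection $\Br(X)/\Br_1(X)\hookrightarrow \Br(A)/\Br_1(A)$. This never requires comparing $\Br(E\times E')/\Br_1(E\times E')$ with $\Br(\EEpbar)^\Gamma$.
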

\begin{proof}
The first assertion follows from \cite{ISZ}*{Cor.~1.2}. Moreover, the proof of their corollary shows that the kernel of $s^*$ is of order at most $d^{6-\bar{r}}$. Since $s^*$ is $\Gamma$-invariant, we have the restriction map $s^*:\Br(\Abar)^\Gamma\rightarrow \Br(\Ebar\times \Epbar)^\Gamma$ and the kernel of this restricted map is then of order at most $d^{6-\bar{r}}$. In other words, we have 
\begin{displaymath}
\#\Br(\Abar)^\Gamma\leq d^{6-\bar{r}}\# \Br(\Ebar\times \Epbar)^\Gamma.
\end{displaymath}
By Theorem \ref{thm_SZ1.3}, we have isomorphisms of $\Gamma$-modules
\begin{displaymath}
\Br(\Abar)\cong \Br(\Xbar),\ \Br(\Ebar\times \Epbar)\cong \Br(\Ybar)
\end{displaymath}
and we obtain the first inequality.

To obtain the second inequality, we notice that $s^*:\Br(A)\rightarrow \Br(E\times E')$ induces a map $s^*:\frac{\Br(A)}{\Br_1(A)}\rightarrow \frac{\Br(E\times E')}{\Br_1(E\times E')}$ whose kernel is contained in that of $s^*:\Br(\Abar)\rightarrow \Br(\Ebar\times\Epbar)$. Then
\begin{displaymath}
\#\frac{\Br(A)}{\Br_1(A)}\leq d^{6-\bar{r}}\# \frac{\Br(E\times E')}{\Br_1(E\times E')}.
\end{displaymath}
By \cite{SZ12}*{Thm.~2.4}, we have an injective map 
\begin{displaymath}
\frac{\Br(X)}{\Br_1(X)}\rightarrow\frac{\Br(A)}{\Br_1(A)}
\end{displaymath}
and we obtain the second inequality.
\end{proof}

There are three cases and we will discuss case (1) first and then cases (2) and (3):
\begin{enumerate}
\item $E=E'$ and $E$ has complex multiplication by an order of $\OO_K$;
\item $E=E'$ and $E$ is without complex multiplication;
\item $E$ and $E'$ are not isogenous.
\end{enumerate}
\subsubsection{Case (1)}
The main inputs here are \cite{New}*{Thm.~2.5}\footnote{\cite{New}*{Thm.~2.9} describes the transcendental part of the Brauer group when the endomorphisms of $E$ are not defined over the base field. We will not need this result since to obtain an elliptic curve with complex multiplication by $\OO_K$, we will pass to a field extension where all the endomorphisms are defined.} and Corollary \ref{rk4deg}.

\begin{theorem}\label{Newton}
Let $E''$ be an elliptic curve over a number field $k'$ such that $\End_{k'}(E'')=\OO_K$. Then the following inequality holds:
\begin{displaymath}
\# \frac{\Br(E''\times E'')}{\Br_1(E''\times E'')}\leq \left(25/2\right)^4[k':\Q]^4.
\end{displaymath}
The estimate remains true without the factor $\left(25/2\right)^4$ if $\OO_K^*=\{\pm1\}$ holds.
\end{theorem}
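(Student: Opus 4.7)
The plan is to deduce this bound from Newton's explicit computation of transcendental Brauer groups of self-products of CM elliptic curves \cite{New}*{Thm.~2.5}; the statement is essentially a quantitative restatement of that result, so the task reduces to verifying Newton's hypotheses in our setting and extracting the explicit constants.

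First, I would reduce to bounding $\#\Br(\bar{A})^\Gamma$ for $A = E''\times E''$, via the standard injection $\Br(A)/\Br_1(A) \hookrightarrow \Br(\bar{A})^\Gamma$ that was already used in Section~\ref{subsec: general case}. The hypothesis $\End_{k'}(E'') = \OO_K$ forces $\End_{k'}(A) \supseteq \Mat_2(\OO_K)$, so the geometric N\'eron--Severi rank of $A$ equals $4$, and hence $\Br(\bar{A}) \cong (\Q/\Z)^2$ by Theorem~\ref{theorem: geometricBrauer}. In particular, for each prime $\ell$ the module $\Br(\bar{A})(\ell)$ is of length two over $\Z_\ell$, so the problem is to bound the Galois invariants of a rank-two $\ell$-divisible module prime by prime and assemble the results.

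Second, I would describe the Galois action $\ell$-adically using CM theory. The representation on $T_\ell(E'')$ factors through a character $\psi_\ell : \Gamma \to (\OO_K \otimes \Z_\ell)^\times$; inside $\wedge^2 T_\ell(A)$ the image of $\NS(\bar{A})\otimes \Z_\ell$ is the four-dimensional $\OO_K$-invariant piece, and the transcendental complement is an $\OO_K$-module on which $\Gamma$ acts (after the Tate twist needed to pass to the Brauer group) through a character built from $\psi_\ell$ and $\bar\psi_\ell$. The Galois-invariants on $\Br(\bar{A})(\ell)$ are then counted by the torsion of this module fixed by $\psi_\ell / \bar\psi_\ell$, and global class field theory controls this count via the degree of the relevant ray class field; assembling across all $\ell$ produces a bound of the shape $[k':\Q]^4$.

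The main subtlety --- and the source of the prefactor $(25/2)^4$ --- is the exceptional unit groups. For every imaginary quadratic $K$ other than $\Q(i)$ and $\Q(\zeta_3)$ one has $\OO_K^\times = \{\pm 1\}$, and the estimates above collapse cleanly to $[k':\Q]^4$. When $\OO_K^\times$ has order $4$ or $6$, however, extra roots of unity intervene and must be tracked through the cyclotomic bookkeeping; absorbing the index $[\OO_K^\times : \{\pm 1\}] \leq 3$ through each of the four factors yields the uniform constant $(25/2)^4$ that dominates all cases simultaneously. Extracting this numerical constant from Newton's proof is essentially the only step that is not completely routine.
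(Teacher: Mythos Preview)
Your high-level strategy --- cite \cite{New}*{Thm.~2.5} and extract explicit constants --- matches the paper's. But your bridge from Newton's result to the claimed bound has a genuine gap, and your explanation of the constant $(25/2)^4$ is incorrect.

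Newton's theorem does not directly output a power of $[k':\Q]$; it gives $\#\tfrac{\Br(E''\times E'')}{\Br_1(E''\times E'')} \le \prod_\ell \ell^{2n(\ell)}$, where $n(\ell)$ is the largest $i$ with the ring class field $K_{\ell^i}$ contained in $k'$. Passing from $\prod_\ell \ell^{2n(\ell)}$ to $(25/2)^4[k':\Q]^4$ requires two ingredients you do not mention: the degree formula for ring class fields (relating $\prod_\ell \ell^{n(\ell)}(1 - (\tfrac{\Delta_K}{\ell})\ell^{-1})$ to $[K_m:K]$, up to a factor of $[\OO_K^*:\{\pm1\}]$), and a lemma comparing the compositum $\prod_\ell K_{\ell^{n(\ell)}}$ with $K_m$ inside $k'$ (this is the paper's Lemma~\ref{Lem_RCF}). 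Your second paragraph instead proposes to redo Newton's $\ell$-adic analysis from scratch via $\psi_\ell/\bar\psi_\ell$; that is not wrong in principle, but it does not deliver the bound without passing through exactly the same ring class field bookkeeping.

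Your account of the constant is also wrong numerically and conceptually. You write that ``absorbing the index $[\OO_K^\times : \{\pm 1\}] \leq 3$ through each of the four factors yields the uniform constant $(25/2)^4$,'' but $3^4 = 81$ is nowhere near $(25/2)^4$. In the paper's argument the factor $25$ arises from an analytic estimate on small primes, namely $\prod_{\ell=2,3,5,7} 3\ell^{1/2}/(\ell-1) < 25$ together with $3\ell^{1/2} < \ell-1$ for $\ell>7$; the units contribute the factor $3$ inside each term of this product and a separate factor $3^{t-1}$ controlling $[k'K_m:k']$ via Lemma~\ref{Lem_RCF}. When $\OO_K^* = \{\pm 1\}$ both of these $3$'s disappear and the analytic inequality can be rerun to drop the prefactor entirely. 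So the constant is a mixture of the unit-group contribution and a small-prime Euler product, not just the unit index raised to the fourth power.
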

\begin{proof}
This is a direct consequence of \cite{New}*{Thm.~2.5}. Let $\ell$ be a rational prime and let $n(\ell)$ be the largest integer $i$ such that the ring class field $K_{\ell^i}$ of the order $\Z+\ell^i\OO_K$ is contained in $k'$. The non-negative integer $n(\ell)$ is nonzero for only finitely many $\ell$ and we use $t$ to denote the number of nonzero $n(\ell)$. By \cite{New}*{Thm.~2.5 and Prop.~2.2}), we have
\begin{displaymath}
\# \frac{\Br(E''\times E'')}{\Br_1(E''\times E'')}\leq \prod_{\ell}\ell^{2n(\ell)}.
\end{displaymath}
Now we bound the right hand side. Let $h_K$ and $\Delta_K$ be the class number and the discriminant of $K$. Write $m=\prod_{\ell}\ell^{n(\ell)}$ and let $K_m$ be the ring class field of the order $\Z+m\OO_K$. First, by \cite{New}*{Rem.~1, eqn.~2.3}, we have 
\begin{equation}\label{equation:New16.1.2.3}
h_K\prod_{\ell, n(\ell)\geq 1}\left(\ell^{n(\ell)}\left(1-\left(\frac{\Delta_K}{\ell} \right)\frac 1\ell\right)\right)\leq 3[K_m:K],
\end{equation}
where $\left(\frac{\Delta_K} \ell \right)$ is the Legendre symbol, which takes values in $\{0,\pm 1\}$.

Since $K_{\ell^{n(\ell)}}$ is contained in $k'$, then by Lemma \ref{Lem_RCF} we have $[k'K_m:k']\leq 3^{t-1}$ (when $t\geq 1$) and therefore also
\begin{displaymath}
3[K_m:K]\leq 3[k'K_m:K]=(3/2)[k'K_m:\Q]\leq (3/2)\cdot 3^{t-1}[k':\Q].
\end{displaymath}
Using $\prod_{\ell=2,3,5,7}3\ell^{1/2}/(\ell-1)<25$ and the fact that $3\ell^{1/2} < (\ell-1)$ holds for $\ell>7$, we have the intermediate inequality
\begin{equation}\label{equation:4.9}
\prod_{\ell, n(\ell)\geq 1}\ell^{n(\ell)-1/2}\leq 25\prod_{\ell, n(\ell)\geq 1}\left(\frac{\ell^{n(\ell)}}{3}\left(1-\left(\frac{\Delta_K} \ell \right)\frac 1\ell\right)\right)\leq \frac{25\cdot 3[K_m:K]}{3^t}\leq (25/2)[k':\Q],
\end{equation}
where the first inequality follows from the inequality $\left(1-\left(\frac{\Delta_K} \ell \right)\frac 1\ell\right)\geq \left(1-\frac 1\ell\right)=\frac{1}{\ell}(\ell-1)$ that holds for any $\ell$ and specifically $\left(1-\frac 1\ell\right)> 3\ell^{-1/2}$ for $\ell>7$. By $4n(\ell)-2\geq 2n(\ell)$, the desired inequality follows from raising both sides of \eqref{equation:4.9} to the 4th power.

To obtain the last assertion, we notice that when $\OO_K^*=\{\pm1\}$, we have $K_m\subset k'$ by Lemma \ref{Lem_RCF} and we can remove the factor $3$ in inequality \eqref{equation:New16.1.2.3}.
\end{proof}

The following lemma arose from a useful discussion with Hendrik Lenstra.

\begin{lemma}\label{Lem_RCF}
Let $K$ be an imaginary quadratic field with algebraic closure $\ol K$. For any integer $m$ let $K_m$ be the ring class field of $K$ associated to the order $\Z+m\calO_K$.  Then for $m=\prod_p p^{e_p}$ the field $K_m$ contains the compositum $L=\prod_p K_{p^{e_p}}\subset \ol K$ and the extension degree is $[K_m:L]=(\#\mu_K/2)^{\max(0,t-1)}$ where $\mu_K$ is the group of roots of unity in $K$ and $t$ is the number of distinct prime numbers dividing $m$.

\end{lemma}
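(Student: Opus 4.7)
The strategy is to transport everything to class groups via class field theory for non-maximal orders. Writing $\OO_m := \Z + m\OO_K$, we have $\Gal(K_m/K) \cong \Cl(\OO_m)$, and the standard exact sequence (see, e.g., Cox, \emph{Primes of the Form $x^2 + ny^2$}, Thm.~7.24)
$$1 \to \OO_K^\times/\{\pm 1\} \to G \to \Cl(\OO_m) \to \Cl_K \to 1, \qquad G := (\OO_K/m\OO_K)^\times/(\Z/m\Z)^\times,$$
(valid for $m \geq 2$, so that $\OO_m^\times = \{\pm 1\}$) identifies $\Gal(K_m/H)$ with $G/\bar u$, where $H = K_1$ is the Hilbert class field and $\bar u \subset G$ is the image of $\mu_K/\{\pm 1\}$.

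By the Chinese Remainder Theorem, $G = \prod_{p \mid m} G_p$ with $G_p := (\OO_K/p^{e_p}\OO_K)^\times/(\Z/p^{e_p}\Z)^\times$, and the same analysis applied prime-by-prime gives $\Gal(K_{p^{e_p}}/H) \cong G_p/\bar u_p$, where $\bar u_p$ is the image of $\mu_K/\{\pm 1\}$ in $G_p$. Functoriality of the class field correspondence (under the natural surjection $\Cl(\OO_m) \twoheadrightarrow \Cl(\OO_{p^{e_p}})$) ensures that the restriction map $\Gal(K_m/H) \twoheadrightarrow \Gal(K_{p^{e_p}}/H)$ is the one induced by the projection $G \twoheadrightarrow G_p$.

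Since each $K_{p^{e_p}}$ contains $H$, we have $L \supset H$, and $\Gal(K_m/L) = \bigcap_{p \mid m} \Gal(K_m/K_{p^{e_p}})$. A direct diagram chase inside $G/\bar u$ identifies this intersection with $\bigl(\prod_{p \mid m} \bar u_p\bigr)/\bar u$, where $\bar u$ is embedded diagonally. The remaining ingredient is that for $p^{e_p} \geq 2$ the map $\mu_K/\{\pm 1\} \to G_p$ is injective: writing $\OO_K = \Z[\tau]$, a congruence $\zeta \equiv n \pmod{p^{e_p}\OO_K}$ with $\zeta = a + b\tau \in \mu_K$ and $n \in \Z$ forces $p^{e_p} \mid b$, and since $|b|$ is bounded when $\zeta$ is a root of unity this forces $b = 0$, hence $\zeta = \pm 1$. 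Thus $\#\bar u_p = \#\bar u = \#\mu_K/2$ for each $p \mid m$, and a count gives
$$[K_m : L] = \frac{(\#\mu_K/2)^t}{\#\mu_K/2} = (\#\mu_K/2)^{t-1}.$$

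The main input is the classical theory of ring class fields; the rest is routine group-theoretic bookkeeping. I foresee no serious obstacle, but two points are worth double-checking against a standard reference: the functoriality that identifies the restriction $\Gal(K_m/H) \to \Gal(K_{p^{e_p}}/H)$ with the map induced by $G \twoheadrightarrow G_p$, and the elementary injectivity $\mu_K/\{\pm 1\} \hookrightarrow G_p$ that makes the diagonal inclusion $\bar u \hookrightarrow \prod_p \bar u_p$ an embedding of groups of the predicted orders.
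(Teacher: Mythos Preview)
Your argument is correct. Both you and the paper work through the same exact sequence relating $\Cl(\OO_m)$ to $\Cl_K$ and the unit contribution, but the final step is organized differently. The paper computes $[K_m:K]$ and $[L:K_1]$ separately via the class-number formula, using that each $K_{p^{e_p}}/K_1$ is unramified outside $p$ to conclude that the $K_{p^{e_p}}$ are linearly disjoint over $K_1$, so that $[L:K_1]=\prod_p[K_{p^{e_p}}:K_1]$; the value of $[K_m:L]$ then drops out of the quotient. You instead identify $\Gal(K_m/L)$ directly as $\bigl(\prod_p \bar u_p\bigr)/\bar u$ inside $G/\bar u$ via CRT and compute its order. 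Your route trades the ramification argument for a small diagram chase plus the explicit check $\mu_K/\{\pm 1\}\hookrightarrow G_p$; the paper's route makes it more visible \emph{why} the compositum has the expected degree over $K_1$. The functoriality you flag is indeed standard (compatibility of the Artin map with the natural surjection of id\`ele or ideal class groups for nested orders), and your injectivity argument is fine once you note that for $\OO_K=\Z[\tau]$ with $\tau\in\{i,(-1+\sqrt{-3})/2\}$ the $\tau$-coefficient of any root of unity lies in $\{0,\pm1\}$. You tacitly assume $m\geq 2$; the case $m=1$ (so $t=0$) is trivial.
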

\begin{proof}
Since for $p|m$ the order $\Z+m\calO_K$ is contained in $\Z+p^{e_p}\calO_K$, it is clear that $K_m$ contains each $K_{p^{e_p}}$ and therefore their compositum $L$.

Let $\mathbb{I}_K$ be the group of id\`{e}les of $K$. By the ``existence theorem'' of global class field theory\cite{NeukirchCFT}*{Thm.~IV.7.1}, there is an order inverting bijection between the set of finite abelian extensions of $K$ and the set of closed subgroups of $\mathbb{I}_K$ of finite index containing $K^\times$, carrying extension degrees to indices. 

Then we have
\begin{align*}
[K_m:K] &= h_K[\widehat{\calO}_m^\times:\mu_K\widehat{\calO}_m^\times]\\
&= h_K[(\calO_K/m\calO_K)^\times:(\Z/m\Z)^\times\cdot \im\mu_K]\\
&= \begin{cases} h_K\cdot\frac{\varphi_K(m)/\varphi(m)}{\#\mu_K/2} & m>1;\\ 
h_K& m=1,\end{cases}
\end{align*}
where for any positive integer $n=\prod_p p^{e_p}$ we denote $\hat{\calO}_n=\prod_p\left(\Z_p+p^{e_p}(\Z_p\otimes \calO_K)\right)$, the image of $\mu_K$ is taken inside $(\calO_K/m\calO_K)^\times$, and $\varphi_K(m)$ denotes the order of $(\calO_K/m\calO_K)^\times$. The map $\Z_{>0}\to \Z_{\geq 0}$ given by $m\mapsto \varphi_K(m)$ is a multiplicative function and is the number field analogue of Eulers totient function $\varphi=\varphi_\Q$.

Since each $K_{p^{e_p}}$ contains the Hilbert class field $K_1$ and each extension $K_{p^{e_p}}/K_1$ is unramified outside $p$, the extension degree $[L:K_1]$ splits as a product 
\begin{displaymath}
[L:K_1] = \prod_{p|m}[K_{p^{e_p}}:K_1] =\prod_{p|m}\frac{\varphi_K(p^{e_p})/\varphi(p^{e_p})}{\#\mu_K/2}.
\end{displaymath}
Using these degrees and $[K_1:K]=h_K$, we arrive at the desired conclusion
\begin{displaymath}
[K_m:L] = (\#\mu_K/2)^{\max(0,t-1)}
\end{displaymath}
where $t$ is the number of distinct primes dividing $m$.
\end{proof}

\begin{theorem} If the N\'eron--Severi rank of $\Abar$ is $4$, then we have an upper bound
\begin{displaymath}
\#\frac{\Br(X)}{\Br_1(X)}\leq 8.80\cdot 10^{191}C_3^2[k:\Q]^{16}(h(A)+\frac{\log C_3}2+\log [k:\Q]+25)^{12},
\end{displaymath}
where $C_3$ is the constant depending on $h(A)$ and $[k:\Q]$ that appeared in Theorem \ref{AtoEC}.
\end{theorem}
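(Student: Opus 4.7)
The plan is to specialize the machinery of Section~\ref{prodEC} and Lemma~\ref{IsogBr} to the Néron--Severi rank~$4$ case and then simply read off the numerical constant. By Theorem~\ref{32}(2)(a), the hypothesis $\rho(\ol A)=4$ forces $A$ to be geometrically isogenous to $E\times E$ for some elliptic curve $E$ with complex multiplication. This places us squarely in Case~(1) of Section~\ref{subsec: general case}, so Corollary~\ref{rk4deg} applies: there exists a finite extension $k\subset k_2$ with $[k_2:k]\leq C_5$, together with an elliptic curve $E''$ over $k_2$ satisfying $\End_{k_2}(E'')=\OO_K$ for an imaginary quadratic field $K$, and an isogeny between $A_{k_2}$ and $E''\times E''$ of degree at most $C_4$.

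Next I would pass to the extension $k_2$: by Remark~\ref{rmk_bc} the group $\Br(X)/\Br_1(X)$ only grows under base change, so it suffices to bound the analogous quotient over $k_2$. The convention on isogenies recalled at the start of Section~\ref{Effective_Faltings} ensures that we may take the isogeny $s\colon E''\times E''\to A_{k_2}$ to have degree $d\leq C_4$. Applying Lemma~\ref{IsogBr} with $\bar r=4$ (so that $6-\bar r=2$) yields
\begin{displaymath}
\#\frac{\Br(X_{k_2})}{\Br_1(X_{k_2})} \leq C_4^{\,2}\cdot \#\frac{\Br(E''\times E'')}{\Br_1(E''\times E'')}.
\end{displaymath}
Since $\End_{k_2}(E'')=\OO_K$, Theorem~\ref{Newton} applies directly over $k_2$ and gives
\begin{displaymath}
\#\frac{\Br(E''\times E'')}{\Br_1(E''\times E'')} \leq (25/2)^4[k_2:\Q]^4 \leq (25/2)^4 C_5^{\,4}[k:\Q]^4.
\end{displaymath}

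The remaining step is purely bookkeeping: substitute the explicit formulas
\begin{align*}
C_4 &= 225\cdot 4^2\cdot 18^{16}\,C_3[k:\Q]^2\bigl(h(A)+\tfrac{\log C_3}{2}+\log[k:\Q]+25\bigr)^2,\\
C_5 &= 1800\cdot 4^2\cdot 18^{24}[k:\Q]^2\bigl(h(A)+\tfrac{\log C_3}{2}+\log[k:\Q]+25\bigr)^2
\end{align*}
into the combined inequality $C_4^{\,2}(25/2)^4 C_5^{\,4}[k:\Q]^4$. Collecting the $C_3$ factor gives $C_3^{\,2}$, the powers of $[k:\Q]$ accumulate as $4+8+4=16$, and the parenthetical height factor accumulates to the twelfth power $4+8$, matching the shape of the target bound. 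Tallying the numerical constants $225^2\cdot 4^{12}\cdot 18^{128}\cdot 25^4\cdot 16^{-1}\cdot 1800^4$ via base-$10$ logarithms produces a coefficient of order $10^{190}$, rounding up safely to $8.80\cdot 10^{191}$.

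The only ``hard'' part is keeping track of constants; the entire proof is a direct concatenation of Theorem~\ref{32}, Corollary~\ref{rk4deg}, Lemma~\ref{IsogBr}, and Theorem~\ref{Newton}, with Remark~\ref{rmk_bc} licensing the initial base change. No further geometric input is needed because the CM hypothesis is exactly what makes $\End_{k_2}(E'')=\OO_K$ attainable after a controlled extension, and the Rank~$4$ case is the only one where Newton's theorem can be invoked with the quadratic order equal to $\OO_K$.
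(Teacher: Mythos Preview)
Your proposal is correct and follows essentially the same route as the paper's own proof: invoke Corollary~\ref{rk4deg} to obtain the extension $k_2$, the CM elliptic curve $E''$ with $\End_{k_2}(E'')=\OO_K$, and the isogeny of degree at most $C_4$; then apply Remark~\ref{rmk_bc}, Lemma~\ref{IsogBr} with $\bar r=4$, and Theorem~\ref{Newton}, and finally substitute the formulas for $C_4$ and $C_5$. Your explicit tallying of constants and exponents matches the paper's bound, and your appeal to Theorem~\ref{32}(2)(a) to justify the CM hypothesis is a harmless addition that the paper leaves implicit in the statement of Corollary~\ref{rk4deg}.
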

\begin{proof}
By Corollary \ref{rk4deg}, there exists a finite extension $k_2$ of $k$ of degree at most $C_5$ such that there exists an elliptic curve $E''$ over $k_2$ whose endomorphism ring is $\OO_K$ such that there is an isogeny $E''\times E''\rightarrow A_{k_2}$ of degree at most $C_4$, where $C_4$ and $C_5$ are constants depending on $h(A)$ and $[k:\Q]$. By Remark \ref{rmk_bc}, we have
\begin{displaymath}
\#\frac{\Br(X)}{\Br_1(X)}\leq \#\frac{\Br(X_{k_2})}{\Br_1(X_{k_2})}.
\end{displaymath}
By Lemma \ref{IsogBr}, we have
\begin{displaymath}
\#\frac{\Br(X_{k_2})}{\Br_1(X_{k_2})}\leq C_4^{6-\bar{r}} \#\frac{\Br(E''\times E'')}{\Br_1(E''\times E'')}=C_4^{2} \#\frac{\Br(E''\times E'')}{\Br_1(E''\times E'')}.
\end{displaymath}
By Theorem \ref{Newton}, we have
\begin{displaymath}
C_4^{2} \#\left(\frac{\Br(E''\times E'')}{\Br_1(E''\times E'')}\right)\leq 12.5^4C_4^2[k_2:\Q]^4=12.5^4C_2^2C_5^4[k:\Q]^4
\end{displaymath}
and we obtain the desired inequality by the formulas of $C_4$ and $C_5$ in Corollary \ref{rk4deg}.
\end{proof}

\subsubsection{Case (2) and Case (3)}
The main input for these cases is Theorems \ref{AtoEC}, \ref{effFalEC} combined with the following result of Skorobogatov and Zarhin.

\begin{proposition}[\cite{New}*{Lem.~2.1} and \cite{SZ12}*{Prop.~3.3}]\label{SZBr}
There is an isomorphism between abelian groups
\begin{displaymath}
\left(\frac{\Br(E\times E')}{\Br_1(E\times E')}\right)_{\ell^{\infty}}\cong\frac{\Br(E\times E')_{\ell_\infty}}{\Br_1(E\times E')_{\ell_\infty}}
\end{displaymath}
and there is an isomorphism between abelian groups
\begin{displaymath}
\frac{\Br(E\times E')_n}{\Br_1(E\times E')_n}\cong\frac{\Hom_\Gamma(E_n,E'_n)}{(\Hom(\Ebar,\Epbar)\otimes \Z/n\Z)^\Gamma}.
\end{displaymath}
\end{proposition}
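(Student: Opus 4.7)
The plan is to derive both isomorphisms from the Kummer sequence on $\ol A := \overline{E\times E'}$ combined with the special structure of the cohomology of a product of elliptic curves. Applying $0\to\mu_n\to\G_m\to\G_m\to 0$ on $\ol A$ yields the short exact sequence of $\Gamma$-modules
$$0\to \NS(\ol A)/n \to \HH^2_\et(\ol A, \mu_n)\to \Br(\ol A)_n\to 0.$$
The first step is to identify both outer terms explicitly. By the K\"unneth formula together with the Weil-pairing self-duality $\HH^1_\et(\ol E, \mu_n)\cong E_n$ (induced by the principal polarization on $E$, and similarly for $E'$), one obtains
$$\HH^2_\et(\ol A, \mu_n) \;\cong\; \mu_n\oplus(E_n\otimes E'_n)\oplus\mu_n,$$
while $\NS(\ol A) \cong \Z[E\times\{0\}]\oplus\Z[\{0\}\times E']\oplus\Hom(\ol E,\ol{E'})$, with the two fiber classes mapping isomorphically onto the outer $\mu_n$ factors (via the trace/cup-product of the Weil pairing) and a homomorphism $\phi$ contributing its graph class to the middle summand. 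Using $E_n\otimes E'_n\cong\Hom(E_n,E'_n)$ coming from the Weil pairing, this produces the Galois-equivariant identification
$$\Br(\ol A)_n \;\cong\; \Hom(E_n,E'_n)/(\Hom(\ol E,\ol{E'})\otimes\Z/n\Z).$$

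The second step is to pass from $\Br(\ol A)_n$ down to $\Br(E\times E')_n/\Br_1(E\times E')_n$. The Hochschild-Serre spectral sequence for $A\to\Spec k$ provides the natural map $\Br(A)/\Br_1(A)\hookrightarrow\Br(\ol A)^\Gamma$; taking $n$-torsion and applying $\Gamma$-invariants to the identification above yields
$$0\to (\Hom(\ol E,\ol{E'})\otimes\Z/n\Z)^\Gamma \to \Hom_\Gamma(E_n,E'_n) \to \Br(\ol A)_n^\Gamma \to \HH^1(\Gamma,\Hom(\ol E,\ol{E'})\otimes\Z/n\Z).$$
The second isomorphism will then follow once one verifies that the image of $\Br(E\times E')_n/\Br_1(E\times E')_n$ inside $\Br(\ol A)_n^\Gamma$ coincides with the image of $\Hom_\Gamma(E_n,E'_n)$. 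This comparison is precisely the diagram chase carried out in \cite{SZ12}*{Prop.~3.3}: the K\"unneth decomposition of $\HH^2_\et(\ol A,\mu_n)$ allows one to split off the $\mu_n$ summands which contribute only to the algebraic part, so that every lift to $\Br(A)$ of a $\Gamma$-invariant class comes from a $\Gamma$-equivariant homomorphism of torsion.

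For the first isomorphism, I would note that $\Br(E\times E')/\Br_1(E\times E')$ embeds into $\Br(\ol A)^\Gamma$, which by Theorem~\ref{theorem: geometricBrauer} is torsion; hence the quotient decomposes canonically as the direct sum of its $\ell$-primary parts. The natural comparison map
$$\Br(E\times E')_{\ell^\infty}/\Br_1(E\times E')_{\ell^\infty} \;\longrightarrow\; \bigl(\Br(E\times E')/\Br_1(E\times E')\bigr)_{\ell^\infty}$$
is injective for formal reasons; surjectivity amounts to the divisibility condition $\Br_1(A)\cap\ell^N\Br(A)=\ell^N\Br_1(A)$ for every $N$, which is proved in \cite{New}*{Lem.~2.1} by combining the Leray identification $\Br_1(A)/\Br_0(A)\cong\HH^1(\Gamma,\NS(\ol A))$ with the compatibility of the $\ell^n$-torsion sequences above as $n$ varies. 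The step I would expect to be the main obstacle is the diagram chase of the second paragraph: one must rule out contributions from $\HH^3$-terms and from the differential $d_2\colon\HH^0(\Gamma,\HH^2_\et(\ol A,\mu_n))\to\HH^2(\Gamma,\HH^1_\et(\ol A,\mu_n))$ in the Hochschild-Serre spectral sequence, and this works only because the product structure of $A$ together with the explicit decomposition of $\NS(\ol A)$ given above forces the obstructing differentials to vanish on the relevant summand.
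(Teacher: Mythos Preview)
The paper does not supply its own proof of this proposition: it is stated with attribution to \cite{New}*{Lem.~2.1} and \cite{SZ12}*{Prop.~3.3} and is immediately followed by Corollary~\ref{BrEEp}. So there is nothing in the paper to compare your argument against beyond the citations themselves.

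That said, a brief remark on your sketch. Your outline of the second isomorphism is essentially the argument in \cite{SZ12}: K\"unneth on $\HH^2_\et(\ol A,\mu_n)$, the Weil-pairing identification $E_n\otimes E'_n\cong\Hom(E_n,E'_n)$, and the observation that the fiber classes account for the algebraic summands, leaving the Hom-piece to carry the transcendental Brauer classes. For the first isomorphism, however, you are working harder than necessary. The Brauer group of a smooth variety over a field is torsion, so both $\Br(E\times E')$ and $\Br_1(E\times E')$ decompose as direct sums of their $\ell$-primary parts; the quotient of a direct sum by a sub-direct-sum is the direct sum of the quotients, and taking $\ell$-primary torsion then just selects the $\ell$-th summand. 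No divisibility condition of the form $\Br_1(A)\cap\ell^N\Br(A)=\ell^N\Br_1(A)$ is needed, and in fact that condition is not obviously equivalent to what you want. The content of \cite{New}*{Lem.~2.1} is exactly this elementary observation that both groups are torsion.
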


\begin{cor}\label{BrEEp}
If either $E\neq E'$\footnote{Remember: by assumption this means that $E$ is not isogenous to $E'$.} or $\End_{\bar{k}}E=\Z$ hold, then 
\begin{displaymath}
\#\frac{\Br(E\times E')}{\Br_1(E\times E')}\leq C(h(E),h(E'), [k:\Q])=(C_2!)^{4},
\end{displaymath}
where $C_2$ is the constant depending on $h(E)$, $h(E')$ and $[k:\Q]$ that appeared in Theorem \ref{FalECnp}.
\end{cor}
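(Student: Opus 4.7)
The plan is to combine Proposition \ref{SZBr} with Theorem \ref{effFalEC} to show that $B := \Br(E\times E')/\Br_1(E\times E')$ has exponent dividing $C_2$, and then deduce a bound on its order from the structure of $\Hom(E_{C_2}, E'_{C_2})$.

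First, I would reduce the $n$-torsion computation. By the second isomorphism in Proposition \ref{SZBr},
$$B_n \cong \frac{\Hom_\Gamma(E_n, E'_n)}{(\Hom(\Ebar, \Epbar) \otimes \Z/n\Z)^\Gamma}.$$
The key observation is that under either hypothesis of the corollary, the natural reduction map $\Hom_k(E, E')/n \to (\Hom(\Ebar, \Epbar) \otimes \Z/n\Z)^\Gamma$ is surjective: in the non-isogenous case both sides vanish, while if $E = E'$ is without complex multiplication, the action of $\Gamma$ on $\Hom(\Ebar, \Epbar) = \Z$ is trivial, so the invariants are simply $\Z/n\Z$ and the image of $\End_k(E) = \Z$ covers them. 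Consequently, $B_n$ is identified with the cokernel of the reduction map $\Hom_k(E, E') \to \Hom_\Gamma(E_n, E'_n)$.

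Next, Theorem \ref{effFalEC} guarantees that this cokernel is annihilated by an integer $M_n \leq C_2$ which is independent of $n$. Since $B$ embeds in $\Br(\overline{E\times E'})^\Gamma$, which is torsion by Theorem \ref{theorem: geometricBrauer}, it is the union of its $n$-torsion subgroups, each killed by $C_2$. Hence $B$ itself has exponent dividing $C_2$, so $B = B_{C_2}$ embeds into $\Hom_\Gamma(E_{C_2}, E'_{C_2}) \subset \Hom(E_{C_2}, E'_{C_2}) \cong \Mat_2(\Z/C_2\Z)$, giving $\#B \leq C_2^4$, which is trivially dominated by $(C_2!)^4$ as stated.

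The main subtle point is the surjectivity of $\Hom_k(E, E')/n \to (\Hom(\Ebar, \Epbar) \otimes \Z/n\Z)^\Gamma$: without this the identification in the second step would break, since $B_n$ would only be a further quotient of the cokernel of $\Hom_k(E, E') \to \Hom_\Gamma(E_n, E'_n)$, and Theorem \ref{effFalEC} could not be applied directly. The hypotheses of the corollary are precisely what makes the surjectivity automatic. Passing from the natural bound $C_2^4$ to $(C_2!)^4$ is a very loose estimate, but it is sufficient for the subsequent applications.
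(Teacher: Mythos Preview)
Your overall strategy---combine Proposition~\ref{SZBr} with Theorem~\ref{effFalEC} to control the exponent of $B=\Br(E\times E')/\Br_1(E\times E')$, then bound its order---matches the paper's, but there is a genuine gap in the exponent step.

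Theorem~\ref{effFalEC} only asserts that for each $n$ the cokernel is killed by \emph{some} integer $M_n\le C_2$; it does not provide a single $M$ independent of $n$. From this you can conclude that every element of $B$ has order at most $C_2$, but \emph{not} that the exponent of $B$ divides $C_2$. (For instance, if $C_2=6$, elements of orders $4$ and $5$ are both allowed, yet neither divides $6$.) Consequently the claim $B=B_{C_2}$ is unjustified, and the sharper bound $\#B\le C_2^4$ does not follow. What does follow is that for each prime $\ell$ the $\ell$-primary part $B(\ell)$ is killed by $\ell^{m(\ell)}$, the largest power of $\ell$ not exceeding $C_2$; then, using the embedding $B\hookrightarrow \Br(\overline{E\times E'})\cong(\Q/\Z)^{6-\bar r}$ with $6-\bar r\le 4$, one gets $\#B(\ell)\le \ell^{4m(\ell)}$ and hence $\#B\le\prod_{\ell\le C_2}\ell^{4m(\ell)}\le (C_2!)^4$. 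This prime-by-prime argument is exactly what the paper does.

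Two smaller remarks. First, even once the exponent of $B$ is known to divide some $N$, your proposed embedding of $B$ into $\Hom_\Gamma(E_N,E'_N)$ is not immediate: the $N$-torsion of $B$ need not coincide with $\Br(E\times E')_N/\Br_1(E\times E')_N$, so Proposition~\ref{SZBr} does not directly give such an embedding; the route via $\Br(\overline{E\times E'})$ is cleaner. Second, your ``main subtle point'' about surjectivity of $\Hom_k(E,E')/n\to(\Hom(\Ebar,\Epbar)\otimes\Z/n\Z)^\Gamma$ is in fact unnecessary: since $B_n$ is a quotient of $\coker(\Hom_k(E,E')\to\Hom_\Gamma(E_n,E'_n))$ by a possibly larger subgroup, any integer killing the latter cokernel also kills $B_n$. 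The hypotheses of the corollary are needed only because Theorem~\ref{effFalEC} and the constant $C_2$ of Theorem~\ref{FalECnp} are established precisely under those assumptions.
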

\begin{proof}
By Theorem \ref{effFalEC}, for any prime $\ell>C_2$ and any positive integer $n$, we have that $\Hom_\Gamma(E_{\ell^n},E'_{\ell^n})/(\Hom(\Ebar,\Epbar)_{\ell^n})^\Gamma$ is trivial. Moreover, for $\ell\leq C_2$, let $\ell^{m(\ell)}$ be the largest $\ell$-power no greater than $C_2$. Theorem \ref{effFalEC} implies that, for any positive integer $m$, the group $\Hom_\Gamma(E_{\ell^m},E'_{\ell^m})/(\Hom(\Ebar,\Epbar)_{\ell^m})^\Gamma$ is killed by ${\ell^{m(\ell)}}$. Then by the second isomorphism in Proposition~\ref{SZBr}, $\tfrac{\Br(E\times E')_n}{\Br_1(E\times E')_n}$ is zero when $n$ is any power of a prime $\ell>C_2$ and is killed by $\ell^{m(\ell)}$ when $n$ is any power of a prime $\ell\leq C_2$. Then the first isomorphism in Proposition \ref{SZBr} implies that $\displaystyle \left(\tfrac{\Br(E\times E')}{\Br_1(E\times E')}\right)_{\ell^\infty}$ is zero when $\ell>C_2$ and is killed by ${\ell^{m(\ell)}}$ when $\ell\leq C_2$. Since $\bar{r}\geq 2$, by Theorem \ref{theorem: geometricBrauer} we have
\begin{displaymath}
\#\left(\frac{\Br(E\times E')}{\Br_1(E\times E')}\right)_{\ell^\infty}\leq \#\Br(\EEpbar)_{\ell^{m(\ell)}}\leq \ell^{4m(\ell)},
\end{displaymath}
for all $\ell \leq C_2$ and the order is $1$ for $\ell>C_2$. Hence we obtain the desired inequality by taking the product over all $\ell \leq C_2$.
\end{proof}

\begin{theorem}
If, after some finite field extension, $A$ is isogenous to $E\times E'$, where either $E$ is not isogenous to $E'$ or $\End_{\bar{k}}E=\Z$, then there is an effectively computable bound on 
$\displaystyle \#\frac{\Br(X)}{\Br_1(X)}$ depending only on $h(A)$ and $[k:\Q]$.
\end{theorem}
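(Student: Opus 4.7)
The plan is to combine the reduction of Lemma \ref{IsogBr} with the effective isogeny estimate of Theorem \ref{AtoEC} and with Corollary \ref{BrEEp}. First, apply Lemma \ref{FofEnd} to $A$ and to some $E\times E'$ which is $\bar k$-isogenous to $A$: this yields a finite extension $k_1/k$ with $[k_1:k]\leq 4\cdot 18^8$ over which all geometric endomorphisms of $A\times E\times E'$ are defined, so in particular $E$ and $E'$ descend to $k_1$ and the running assumption of Section \ref{prodEC}, that geometric isogeny equals $k_1$-isogeny for the elliptic factors, is in force. Theorem \ref{AtoEC} then supplies an actual $k_1$-isogeny $s:E\times E'\to A_{k_1}$ of degree $d\leq C_3(h(A),[k:\Q])$.

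Next, by Remark \ref{rmk_bc} we have
\begin{displaymath}
\#\frac{\Br(X)}{\Br_1(X)}\leq \#\frac{\Br(X_{k_1})}{\Br_1(X_{k_1})},
\end{displaymath}
and Lemma \ref{IsogBr} applied over $k_1$ reduces the problem to bounding $\#\Br((E\times E')_{k_1})/\Br_1((E\times E')_{k_1})$ at the cost of a multiplicative factor $d^{6-\bar r}\leq C_3^{4}$, where $\bar r\in\{2,3\}$ is the geometric N\'eron--Severi rank of $A$. The two alternative hypotheses, that $E$ and $E'$ are non-isogenous over $\bar k$ or that $\End_{\bar k}E=\Z$, are both geometric in nature and therefore persist over $k_1$; consequently Corollary \ref{BrEEp} applies and provides the bound
\begin{displaymath}
\#\frac{\Br((E\times E')_{k_1})}{\Br_1((E\times E')_{k_1})}\leq \bigl(C_2(h(E),h(E'),[k_1:\Q])!\bigr)^{4}.
\end{displaymath}

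To promote this into a bound depending only on $h(A)$ and $[k:\Q]$, one must bound $h(E)$ and $h(E')$ in terms of $h(A)$ and $d$. Additivity of the stable Faltings height on products, together with the standard comparison of heights for isogenous abelian varieties, gives
\begin{displaymath}
h(E)+h(E')=h(E\times E')\leq h(A)+\tfrac{1}{2}\log d\leq h(A)+\tfrac{1}{2}\log C_3(h(A),[k:\Q]),
\end{displaymath}
which combined with the lower bound $h(E),h(E')\geq -3$ used throughout Section \ref{Effective_Faltings} yields an explicit upper bound on $\max(1,h(E),h(E'))$ depending only on $h(A)$ and $[k:\Q]$. Plugging this bound into $C_2$ (and using $[k_1:\Q]\leq 4\cdot 18^{8}\cdot [k:\Q]$) then yields the desired effectively computable constant.

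The main obstacle is not conceptual but purely bookkeeping: the constant $C_2$ contains the factorial of a polynomial expression in $h(E),h(E')$ and $[k_1:\Q]$, each of which has itself been bounded by towers involving $C_3$ and by Theorem \ref{AtoEC}. Keeping the resulting bound explicit, finite, and written in terms of $h(A)$ and $[k:\Q]$ alone is the only work required; all the effectivity has already been established in Sections \ref{Effective_Faltings} and \ref{prodEC}, so the argument is a straightforward, if tedious, assembly.
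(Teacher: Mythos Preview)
Your proposal is correct and follows essentially the same route as the paper: pass to an extension $k_1/k$ of degree at most $4\cdot 18^8$, invoke Theorem~\ref{AtoEC} to get an isogeny of degree at most $C_3$, apply Remark~\ref{rmk_bc} and Lemma~\ref{IsogBr} to reduce to $E\times E'$, bound that quotient by Corollary~\ref{BrEEp}, and finally control $h(E),h(E')$ and $[k_1:\Q]$ in terms of $h(A)$ and $[k:\Q]$.

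One small imprecision: you write that Lemma~\ref{FofEnd} is applied ``to $A$ and to some $E\times E'$'' and conclude that $E,E'$ descend to $k_1$. But Lemma~\ref{FofEnd} presupposes that both factors are already defined over the base field $k$, which $E\times E'$ need not be. The clean way (and the one the paper uses, via the proof of Theorem~\ref{AtoEC}) is to apply Lemma~\ref{FofEnd} to $A$ alone, obtaining $k_1$ with $[k_1:k]\leq 4\cdot 18^8$ over which all of $\End_{\bar k}(A)$ is defined; the elliptic factors then arise over $k_1$ from the idempotents in $\End_{k_1}(A)\otimes\Q$. Your bound on $[k_1:k]$ and everything downstream is unaffected.
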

\begin{proof}
We may assume that $E=E'$ if $E$ and $E'$ are isogenous. Let $k_1$ be the smallest finite extension of $k$ such that $E$ and $E'$ are defined.
By Theorem \ref{AtoEC}, there is an isogeny $E\times E'\rightarrow A_{k_1}$ of degree at most $C_3$ and hence by Remark \ref{rmk_bc} and Lemma \ref{IsogBr}, we have
\begin{displaymath}
\#\frac{\Br(X)}{\Br_1(X)}\leq \#\frac{\Br(X_{k_1})}{\Br_1(X_{k_1})}\leq C_3^4\#\frac{\Br(E\times E')}{\Br_1(E\times E')}.
\end{displaymath}
Then by Corollary \ref{BrEEp}, we have
\begin{displaymath}
\#\frac{\Br(X)}{\Br_1(X)}\leq C_3(h(A),[k:\Q])^4(C_2(h(E),h(E'),[k_1:\Q])!)^4.
\end{displaymath}
Finally, in order to bound the right-hand side in terms of $h(A)$ and $[k:\Q]$, we use the inequalities $[k_1:\Q]\leq 4\cdot 18^8[k:\Q]$ and $\max(h(E),h(E'),1)\leq h(A)+\frac 32+\frac 12 \log C_3$.
\end{proof}

\section{Computations on rank 1}\label{computations}
In this section we discuss some computations in order to determine $\Br_1(X)/\Br_0(X)$ through $\HH^1(k,\NS(\ol{X}))$ using \textsc{Magma}, where the geometric N\'{e}ron--Severi rank of $X$ is 1. Recall that the N\'{e}ron--Severi lattice of a Kummer surface is determined by the sixteen 2-torsion points on the associated abelian surface and its N\'{e}ron--Severi lattice. A principally polarized abelian surface is the Jacobian of a genus 2 curve $C$ and its 2-torsion points correspond to the classes $p_i-p_j$ of differences of the six ramification points of $C\to\PP^1$.

First we need to fix some ordering. Let $\{p_1,\ldots,p_6\}$ be the ramification points of $C$. Then on $\Jac(C)[2]=\{0,p_i-p_j:i<j\}$ the following additive rule holds
\begin{displaymath}
p_i-p_j = p_k-p_l+p_n-p_m
\end{displaymath}
where $\{i,j\}$ and $\{k,l,m,n\}$ are two complementary subsets of $\{1,\ldots,6\}$.

\begin{lemma}
The set $\{p_1-p_2=:v_1, p_1-p_3=:v_2, p_1-p_4=:v_3, p_1-p_5=:v_4\}$ forms a basis of $\Jac(C)[2]\cong \F_2^4$.
\end{lemma}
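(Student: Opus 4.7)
The plan is to use that $\Jac(C)[2]$ has $\F_2$-dimension $2g=4$, so it suffices to verify that $v_1,v_2,v_3,v_4$ are linearly independent over $\F_2$. I would carry out this verification by direct computation, relying on two ingredients: the additive rule recalled just before the lemma, and the identity $p_i-p_j=p_j-p_i$ in $\Jac(C)[2]$ (which holds because each $p_i-p_j$ is $2$-torsion).

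A preliminary observation is that for any distinct $a,b\in\{1,2,3,4\}$, one can rewrite $v_a+v_b=v_a-v_b=(p_1-p_{a+1})-(p_1-p_{b+1})=p_{b+1}-p_{a+1}$, so this sum is represented by a difference of two Weierstrass points whose indices lie in $\{2,3,4,5\}$. I would then enumerate the nontrivial $\F_2$-linear combinations $\sum_{i\in S}v_i$ according to the size of the support $S\subseteq\{1,2,3,4\}$. If $|S|=1$ the combination is $v_i=p_1-p_{i+1}\neq 0$; if $|S|=2$ it equals $p_{b+1}-p_{a+1}\neq 0$ by the preliminary observation. If $|S|=3$, say $S=\{a,b,c\}$, I would write the sum as $(v_a+v_b)+v_c=(p_{b+1}-p_{a+1})+(p_1-p_{c+1})$; the supports $\{a+1,b+1\}$ and $\{1,c+1\}$ are disjoint, so the additive rule produces the class $p_m-p_6$, where $m$ is the unique element of $\{2,3,4,5\}\setminus\{a+1,b+1,c+1\}$, which is nonzero. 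If $|S|=4$, I would compute $(v_1+v_2)+(v_3+v_4)=(p_2-p_3)+(p_4-p_5)$; the two summands have disjoint supports whose union is $\{2,3,4,5\}$, so the additive rule yields $p_1-p_6\neq 0$.

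Since no nontrivial linear combination produces the zero class, the four vectors are $\F_2$-linearly independent, and hence form a basis of $\Jac(C)[2]$. There is no real obstacle in the argument; the only mild subtlety is the $|S|=3$ case, where one must recognize that the structure of the chosen classes (all sharing the index $1$, with remaining indices exhausting $\{2,3,4,5\}$) guarantees that the complementary pair produced by the additive rule always contains the index $6$, ensuring nonvanishing automatically.
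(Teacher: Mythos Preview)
Your proof is correct and follows essentially the same approach as the paper: both arguments rule out nontrivial $\F_2$-relations by casework on the size of the support $S$, and your computation for $|S|=4$ matches the paper's verbatim. The only difference is cosmetic: the paper dispatches the odd cases $|S|=1,3$ in one line (``we need to use an even number''), implicitly invoking a parity functional, whereas you compute those sums explicitly via the additive rule.
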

\begin{proof}
In order to write 0 as a linear combination of these elements (over $\F_2$), we need to use an even number. Since any two of these are different, this may only be done using all four of them. However, the sum of these four elements is $p_2-p_3+p_4-p_5=p_1-p_6\neq 0$.
\end{proof}

We order the 2-torsion elements in terms of $p_i-p_j$ and in terms of $v_i$ in Table \ref{order_e}.

\begin{table}[h]
\begin{tabular}{|c|c|}
\hline
$e_1=0$ &  $e_9=p_1-p_5=v_4$\\
$e_2=p_1-p_2=v_1$ &  $e_{10}=p_2-p_5=v_1+v_4$\\
$e_3=p_1-p_3=v_2$ &  $e_{11}=p_3-p_5=v_2+v_4$\\
$e_4=p_2-p_3=v_1+v_2$ &  $e_{12}=p_4-p_6=v_1+v_2+v_4$\\
$e_5 = p_1-p_4=v_3$ & $e_{13}=p_4-p_5=v_3+v_4$\\
$e_6=p_2-p_4=v_1+v_3$ & $e_{14}=p_3-p_6=v_1+v_3+v_4$\\
$e_7=p_3-p_4=v_2+v_3$ & $e_{15}=p_2-p_6=v_2+v_2+v_4$\\
$e_8=p_5-p_6=v_1+v_2+v_3$ & $e_{16}=p_1-p_6=v_1+v_2+v_3+v_4$\\
\hline
\end{tabular}
\caption{}
\label{order_e}
\end{table}
The Galois action is defined by a subgroup of $S_6$, acting on the six ramification points $p_i$ and hence on the set of $e_i$. This action defines $S_6$ as a subgroup of $S_{16}$. We know that $S_6$ is generated by the two elements $(1,2)$ and $(1,2,3,4,5,6)$, so to determine the map $S_6\to S_{16}$ we need only specify the images of $(1,2)$ and $(1,2,3,4,5,6)$.

\begin{lemma}\label{action_S6}
Let $\rho\colon S_6\to S_{16}$ be the map that represents the action of $S_6$ on the sixteen 2-torsion points $e_i$. Then
\begin{displaymath}
\rho((1,2)) = (3,4)(5,6)(9,10)(15,16)
\end{displaymath}
and
\begin{displaymath}
\rho((1,2,3,4,5,6)) = (2,4,7,13,8,16)(3,6,11,12,9,15)(5,10,14)
\end{displaymath}
hold.
\end{lemma}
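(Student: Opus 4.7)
The statement is pure bookkeeping: it asks us to compute how two explicit elements of $S_6$ act on the list of sixteen 2-torsion points recorded in Table~\ref{order_e}. The plan is therefore to argue $S_6$-equivariance of the labeling and then read off the permutation from the table.

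First I would record the key fact underpinning the computation: since every $e_i$ is 2-torsion in $\Jac(C)$, we have $p_b-p_a=p_a-p_b$ in $\Jac(C)[2]$, so the class $e_i$ only depends on the unordered pair $\{p_a,p_b\}$ (or on the complementary $4$-subset via the relation $p_i-p_j=p_k-p_l+p_n-p_m$ recalled before the lemma). In particular, the natural action of $S_6$ on ramification points induces an action on $\Jac(C)[2]$ by $\sigma\cdot(p_a-p_b)=p_{\sigma(a)}-p_{\sigma(b)}$, which is well-defined because of the symmetry just observed.

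Next I would simply apply $\sigma=(1,2)$ to the sixteen representatives in Table~\ref{order_e}, one by one. For example, $e_3=p_1-p_3\mapsto p_2-p_3=e_4$, $e_9=p_1-p_5\mapsto p_2-p_5=e_{10}$, and $e_{15}=p_2-p_6\mapsto p_1-p_6=e_{16}$; the pairs not involving $1$ or $2$, as well as the pair $\{1,2\}$ itself, are fixed. Collecting the results yields exactly the four transpositions in the claimed image $\rho((1,2))=(3,4)(5,6)(9,10)(15,16)$.

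The same procedure handles $\sigma=(1,2,3,4,5,6)$, though here one must be slightly careful with two cases: (i)~the class $e_8=p_5-p_6$ maps to $p_6-p_1=p_1-p_6=e_{16}$, using $-x=x$, and similarly $e_{12},e_{14},e_{15},e_{16}$ cycle back into indices $1$--$9$; (ii)~the identity $e_1=0$ is fixed. Tracing the orbits of the remaining fifteen classes through Table~\ref{order_e} produces one $6$-cycle on the pairs containing $\{1,2\}$-type indices, a second $6$-cycle, and one $3$-cycle, summing to $6+6+3=15$ moved points, and matching $\rho((1,2,3,4,5,6))=(2,4,7,13,8,16)(3,6,11,12,9,15)(5,10,14)$. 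The only real obstacle is keeping the indexing straight while converting each image $p_{\sigma(a)}-p_{\sigma(b)}$ back into a representative from Table~\ref{order_e}; there is no conceptual difficulty beyond that.
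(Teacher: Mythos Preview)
Your proposal is correct and follows exactly the paper's approach: the paper's proof is a single sentence (``Direct computation on the elements in Table~\ref{order_e}, e.g.\ $\rho((1,2))$ maps $e_3=p_1-p_3$ to $p_2-p_3=e_4$''), and your write-up simply fleshes out this computation with the well-definedness remark and a few more sample steps.
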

\begin{proof}
Direct computation on the elements in Table \ref{order_e}, e.g. $\rho((1,2))$ maps $e_3=p_1-p_3$ to $p_2-p_3=e_4$.
\end{proof}

Using the description from \cite{LP80}*{Prop.~3.4 and 3.5} as explained in section \ref{rank1}, the lattice $K$ is generated by $\bigoplus_{i=1}^{16}\Z\pi_*E_i$ together with lifts from polynomials in four variables with values in $\frac12\Z/\Z$ of degree at most 1. These are generated as an abelian group by $x_1,x_2,x_3,x_4,1$, where the set of $x_i$'s is dual to the set of $v_j$'s in the sense $x_i(v_j)=\delta_{ij}$. We identify the set of exceptional curves with the set of 2-torsion points in the natural way by identifying $E_i$ and $e_i$ for each $i=1,\ldots,16$.

From a theoretical perspective, one could use the approach as lain out in section \ref{rank1} in order to calculate $\NS(X)$, but for the case $\rk\NS(\ol A)=1$, it turns out that there is an easier approach which involves knowing the index of $\pi_*\NS(A)\oplus K$ in $\NS(X)$. 

\begin{lemma}\label{index_NS}
Let $A$ be an abelian surface of N\'{e}ron--Severi rank $\rho$, write $X=\Kum(A)$ and let $K$ be the saturation of $\bigoplus_{i=1}^{16}\Z \pi_*E_i$ inside $\NS(X)$. Then the index of $\pi_*\NS(A)\oplus K$ inside $\NS(X)$ is $2^{\rho}$.
\end{lemma}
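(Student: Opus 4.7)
The plan is to deduce the index $[\NS(X):\pi_*\NS(A)\oplus K]$ from a discriminant comparison. Writing $M := \pi_*\NS(A) \oplus K$ and $N := \NS(X)$ (both of rank $16+\rho$), the standard identity $[N:M]^2 = |\disc(M)|/|\disc(N)|$ reduces the statement to showing that this ratio equals $2^{2\rho}$.

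For $|\disc(M)|$, I will use \cite{LP80}*{Prop.~3.4}, which says that the pairing on the primitive sublattice $\pi_* H^2(A, \Z) \subset H^2(X, \Z)$ is twice the pairing on $H^2(A, \Z)$, so that $|\disc(\pi_*\NS(A))| = 2^\rho|\disc(\NS(A))|$; together with the identification $|\disc(K)| = 2^6$ from \cite{LP80}*{Prop.~3.6} (the discriminant form of $K$ is of order $2^6$), this yields $|\disc(M)| = 2^{\rho+6}|\disc(\NS(A))|$.

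For $|\disc(N)|$, unimodularity of the K3 lattice $H^2(X, \Z)$ together with the primitivity of $\NS(X)$ gives $|\disc(\NS(X))| = |\disc(T_X)|$, where $T_X$ is the transcendental lattice. The key step is the identification $T_X = \pi_* T_A$; granting this, the doubling of the pairing yields $|\disc(T_X)| = 2^{6-\rho}|\disc(T_A)|$, and unimodularity of $H^2(A, \Z)$ gives $|\disc(T_A)| = |\disc(\NS(A))|$, so $|\disc(N)| = 2^{6-\rho}|\disc(\NS(A))|$. Combining, $[N:M]^2 = 2^{2\rho}$, hence $[N:M] = 2^\rho$.

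The main obstacle is therefore the identification $T_X = \pi_* T_A$, equivalently the primitivity of $\pi_* T_A$ in $H^2(X, \Z)$. For this I would argue as follows: suppose $nx \in \pi_* T_A \subseteq \pi_* H^2(A,\Z)$ for some $x \in H^2(X, \Z)$ and $n \geq 1$. Primitivity of $\pi_* H^2(A, \Z)$ in $H^2(X, \Z)$ (\cite{LP80}*{Prop.~3.4}) forces $x = \pi_* y$ for some $y \in H^2(A, \Z)$. Since the Kummer involution acts trivially on $H^2(A, \Z)$, the relation $\pi^* \pi_* = 2\cdot \mathrm{id}$ on this invariant part shows $\pi_*|_{H^2(A,\Z)}$ is injective, so $ny \in T_A$; then primitivity of $T_A$ inside $H^2(A, \Z)$ (again from unimodularity of $H^2(A, \Z)$) gives $y \in T_A$, whence $x \in \pi_* T_A$. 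The remaining discriminant computations are routine bookkeeping.
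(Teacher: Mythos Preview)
Your argument is correct and follows essentially the same discriminant-comparison route as the paper: both compute $|\disc(\pi_*\NS(A)\oplus K)|=2^{\rho+6}|\disc(\NS(A))|$ and $|\disc(\NS(X))|=|\disc(T_X)|=2^{6-\rho}|\disc(\NS(A))|$, then take the square root of the ratio. The only difference is that the paper asserts $|\Disc T(X)|=2^{6-\rho}|\Disc T(A)|$ without further comment, whereas you supply the underlying identification $T_X=\pi_*T_A$ via a primitivity argument; this extra justification is sound (orthogonal complements are saturated, so $T_A$ is primitive in $H^2(A,\Z)$, and together with \cite{LP80}*{Prop.~3.4} your chase gives primitivity of $\pi_*T_A$ in $H^2(X,\Z)$, hence equality with $T_X$ since they agree rationally).
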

\begin{proof}
Write $t=|\Disc \NS(A)|$, then also $t=|\Disc T(A)|$ holds, where $T(A)$ is the transcendental lattice of $A$, since $\HH^2(A,\Z)$ is unimodular. We have equality of ranks 
\begin{displaymath}
\rk T(X)=\rk T(A)=6-\rho,
\end{displaymath}
and hence $|\Disc T(X)|=t\cdot 2^{6-\rho}$ from which follows $|\Disc \NS(X)|=t\cdot 2^{6-\rho}$ since $\HH^2(X,\Z)$ is unimodular.

Let $L=\pi_*\NS(A)$. Then $\rk L=\rho$ and $|\Disc L|=2^{\rho}t$ hold.

We use the chain of inclusions
\begin{displaymath}
L\oplus K\subset \NS(X)\subset \NS(X)^\vee \subset L^\vee\oplus K^\vee
\end{displaymath}
The index of $L\oplus K\subset L^\vee\oplus K^\vee$ is $2^\rho t\cdot 2^6$ (see section \ref{rank1} for the discriminant of $K$) and combining with the discriminants above, we find the statement of the lemma.
\end{proof}

From now on, assume $\rho=1$, i.e. the geometric N\'{e}ron--Severi rank of $X$ is 17. Let $l$ be the push-forward of the theta-divisor on $A$. Then $l^2=4$ and by Lemma \ref{index_NS}, the index of $\Lambda:=\langle l\rangle\oplus K$ in $\NS(\ol X)$ is 2. It therefore suffices to find a single element $D\in \NS(\ol X)$ such that $2D$ is an element of $\Lambda$ but $D$ itself is  not. Then $\Lambda$ and $D$ together span $\NS(\ol X)$. 

\begin{lemma}\label{oneNSiso}
Up to isomorphism there is only one index 2 even overlattice of $\Lambda$. 
\end{lemma}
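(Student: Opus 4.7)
The plan is to combine Nikulin's bijection between overlattices and isotropic subgroups of the discriminant form with his uniqueness theorem for indefinite even lattices. First I would identify the discriminant form: $D(\Lambda)=\mathbb{Z}/4\mathbb{Z}\oplus D(K)\cong\mathbb{Z}/4\mathbb{Z}\oplus\mathbb{F}_2^6$ carries the quadratic form $q_\Lambda(\alpha,v)=\alpha^2/4+q_K(v)\pmod{2\mathbb{Z}}$, where $q_K$ is the Arf invariant $0$ hyperbolic form identified in Section \ref{rank1} as the discriminant form of $U(2)^{\oplus 3}$. Because $\langle l\rangle$ is positive definite and $K$ is negative definite, the only vectors of square $4$ in $\Lambda$ are $\pm l$, so every isometry of $\Lambda$ preserves the summands and $O(\Lambda)=\{\pm 1\}\times O(K)$.

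Next I would enumerate the order-$2$ isotropic elements of $(D(\Lambda),q_\Lambda)$. Writing such an element as $(\alpha,v)$ with $2\alpha=0$, there are two possibilities: (Type I) $\alpha=0$ and $v\in D(K)\smallsetminus\{0\}$ is isotropic, or (Type II) $\alpha=2$ and $v$ is anisotropic, since $q_{\langle l\rangle}(2)=1$. Both produce even index-$2$ overlattices; call them $T_1$ and $T_2$. These lie in distinct $O(\Lambda)$-orbits, since the image of $O(K)$ in $O(q_K)$ acts transitively on nonzero isotropic vectors and on anisotropic vectors separately, so Nikulin's correspondence alone does not immediately give an isomorphism between them.

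The main step, and the crux of the argument, is therefore to show that $T_1$ and $T_2$ are abstractly isomorphic despite lying in separate orbits. I would do this by computing the two discriminant forms directly. For $T_1$, $D(T_1)=\mathbb{Z}/4\mathbb{Z}\oplus(v^\perp/\langle v\rangle)$, and Witt cancellation applied to the Arf $0$ form on $\mathbb{F}_2^6$ shows that $v^\perp/\langle v\rangle$ is the Arf $0$ form on $\mathbb{F}_2^4$, i.e.\ the discriminant form of $U(2)^{\oplus 2}$. For $T_2$, I would exhibit a concrete element $(1,w)\in D(T_2)$ of order $4$ with $q$-value $1/4$ (choosing $w$ with $b_K(w,v)=\tfrac12$), whose orthogonal complement in $D(T_2)$ is again the discriminant form of $U(2)^{\oplus 2}$. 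This produces an orthogonal splitting $D(T_2)\cong\mathbb{Z}/4\mathbb{Z}\oplus q_{U(2)^{\oplus 2}}\cong D(T_1)$; the delicate part is verifying that the $4$-dimensional $\mathbb{F}_2$-factor remains Arf $0$ rather than becoming Arf $1$.

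Once the discriminant forms match, both $T_i$ are even indefinite lattices of signature $(1,16)$ with $\mathrm{rk}(T_i)=17\geq 2+\ell(D(T_i))=7$, where $\ell$ denotes the minimal number of generators. Nikulin's uniqueness theorem for indefinite even lattices (\cite{Nik}*{Thm.~1.14.2}) then yields $T_1\cong T_2$, completing the proof.
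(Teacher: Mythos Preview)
Your argument is correct and actually proves the lemma as stated, whereas the paper takes a shortcut that relies on geometric input not visible in the abstract statement.

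The paper's proof begins by asserting that a generator of an order-$2$ isotropic subgroup must project nontrivially to $D(\pi_*\NS(\ol A))$, justified by the phrase ``since $K$ is saturated''. This is not an intrinsic fact about the lattice $\Lambda$: abstractly, $D(K)$ (being the discriminant form of $U(2)^{\oplus 3}$) has many nonzero isotropic vectors, and each yields a genuine even index-$2$ overlattice of $\Lambda$ in which $K$ is \emph{not} primitive. What the paper is really using is the prior geometric fact, from \cite{LP80}, that $K$ sits primitively in $\NS(\ol X)$; hence only your Type~II subgroups are relevant for the application. The paper then checks, by a direct orbit computation in $O(D(K))$, that all Type~II subgroups give isomorphic overlattices. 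So the paper's proof establishes the weaker but sufficient claim ``there is a unique index-$2$ even overlattice in which $K$ is primitive'', and this is all that is needed to pin down $\NS(\ol X)$.

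Your route is more honest to the abstract formulation: you allow both Type~I and Type~II, acknowledge that they may lie in distinct $O(\Lambda)$-orbits, and then show that the resulting overlattices are nonetheless abstractly isometric by computing that both have discriminant form $\langle \tfrac14\rangle \oplus q_{U(2)^{\oplus 2}}$ and invoking Nikulin's uniqueness theorem for indefinite even lattices of rank exceeding $\ell(D)+2$. This is the right way to prove the lemma as literally written. One small comment: your claim that the image of $O(K)$ in $O(q_K)$ is transitive on each type is not actually needed for your final argument (and would itself require justification); once you pass to comparing discriminant forms and applying Nikulin, the orbit structure under $O(\Lambda)$ becomes irrelevant. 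The ``delicate part'' you flag, verifying that the $\F_2^4$-summand in $D(T_2)$ has Arf invariant $0$, can be done cleanly by exhibiting a pair of orthogonal hyperbolic planes inside it, or by a counting argument on isotropic vectors.
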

\begin{proof}
Even overlattices of index 2 correspond to isotropic subgroups of the discriminant group $D(\Lambda)=D(\pi_*\NS(\ol A))\oplus D(K)$ of order 2. Since $K$ is saturated, a generating element of such a subgroup projects to an element of $D(\pi_*\NS(\ol A))$ which has order exactly 2. Since $D(\pi_*\NS(\ol A))$ is isomorphic to $\frac14\Z/\Z$, there is only one such element, which has square $1\bmod 2$. We therefore need to consider order 2 elements of square $1\bmod 2$ in $D(K)$. Since we remember the intersection form on $D(K)$ from section \ref{rank1}, we easily see that there are four such elements, with coordinates $(1,0,0,0,0,1)$, $(0,1,0,0,1,0)$, $(0,0,1,1,0,0)$ and $(1,1,1,1,1,1)$. By calculating the centralizer of the intersection matrix of $D(K)$ inside $\GL_6(\F_2)$, that is $\calO(D(K))$, it is easily found that each of these lie in the same orbit under the action of $\calO(D(K))$. 
\end{proof}

It is worthwhile to remark that the Galois action on the 2-torsion points of $A$ induces an action on $D(K)$ and only one of the four elements in the previous proof is invariant under the action of the full symmetric group $S_6$, which in our chosen basis is $(1,1,1,1,1,1)$.

\begin{lemma}
The element $D=\frac12(\pi_*E_1+\pi_*E_8+\pi_*E_{12}+\pi_*E_{14}+\pi_*E_{15}+\pi_*E_{16}+l)$ together with $\Lambda$ spans $\NS(\ol X)$. 
\end{lemma}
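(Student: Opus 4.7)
The plan is to exhibit $D$ as a $\Z$-linear combination of known effective divisor classes on $\ol X$, so that $D\in \NS(\ol X)$, and then to observe that $D\notin \Lambda$; since Lemma~\ref{index_NS} (applied with $\rho=1$) gives $[\NS(\ol X):\Lambda]=2$, this forces $\langle \Lambda, D\rangle = \NS(\ol X)$.

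First I would identify the index set $S=\{1,8,12,14,15,16\}$ geometrically. Inspecting Table~\ref{order_e}, these are precisely the six $2$-torsion classes of the form $[p_i-p_6]$ with $i\in\{1,\dots,6\}$, and they are exactly the $2$-torsion points lying on the symmetric theta divisor $\Theta_6:=\{\,[P-p_6]:P\in C\,\}\subset A$: indeed $[P-p_6]\in A[2]$ forces $2P\sim K_C$, so $P$ must be Weierstrass. Since $p_6$ is a Weierstrass point, $\Theta_6$ is $\iota$-invariant as a divisor on $A$. Passing to the blow-up $\sigma\colon \tilde A\to A$, the strict transform is
\[
\tilde\Theta_6 \;=\; \sigma^*\Theta_6 \;-\; \sum_{i\in S} E_i.
\]
The involution $\iota$ preserves $\tilde\Theta_6$, while acting trivially on each exceptional $E_j$ (since $\iota$ acts by $-1$ on the tangent space at every $2$-torsion point, hence trivially on its projectivization). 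Let $T_6$ be the effective divisor on $X$ to which $\tilde\Theta_6$ descends. Because $\iota$ restricts to the non-trivial hyperelliptic involution on $\tilde\Theta_6\cong C$, the map $\pi|_{\tilde\Theta_6}$ is generically $2$-to-$1$, whence
\[
2T_6 \;=\; \pi_*\tilde\Theta_6 \;=\; \pi_*\sigma^*\Theta_6 \;-\; \sum_{i\in S}\pi_*E_i \;=\; l \;-\; \sum_{i\in S}\pi_*E_i,
\]
using $l=\pi_*\sigma^*\Theta$ by the definition of $l$. Rearranging yields the key identity
\[
D \;=\; T_6 \;+\; \sum_{i\in S}\pi_*E_i,
\]
a $\Z$-combination of effective divisor classes; in particular $D\in \NS(\ol X)$.

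Finally I would verify $D\notin \Lambda$: the decomposition $\Lambda = \langle l\rangle\oplus K$ is orthogonal because $l\cdot \pi_*E_i=0$ for all $i$, so every element of $\Lambda$ has integer coefficient of $l$, whereas the coefficient of $l$ in $D$ equals $\tfrac12$. Combined with $[\NS(\ol X):\Lambda]=2$, this forces $\langle\Lambda,D\rangle=\NS(\ol X)$, proving the lemma. The main subtlety is getting the trope identity $2T_6 = l - \sum_{i\in S}\pi_*E_i$ right: the leading coefficient $2$ is present precisely because $\iota$ acts non-trivially on $\tilde\Theta_6$ (even though it fixes every $E_i$ pointwise), and this bookkeeping is what distinguishes the set $S$ from an arbitrary choice of six indices.
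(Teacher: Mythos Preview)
Your proof is correct but takes a genuinely different route from the paper's. The paper argues purely lattice-theoretically: knowing $[\NS(\ol X):\Lambda]=2$, any representative $D$ of the nontrivial coset must have $l$-coefficient $\tfrac12$ (since $K$ is saturated), its $\pi_*E_i$-coefficients must be integers (from integrality of $D\cdot \pi_*E_i$), evenness forces $\sum a_i\equiv 2\pmod 4$, and finally the projection of $D$ to $D(K)$ must be one of the four isotropic elements found in the preceding lemma; among these the stated $D$ is singled out as the unique $S_6$-invariant choice. You instead construct $D$ geometrically via the classical trope: the symmetric theta divisor $\Theta_6$ through the Weierstrass point $p_6$ passes exactly through the six $2$-torsion points indexed by $S$, and its image $T_6\subset X$ satisfies $2T_6=l-\sum_{i\in S}\pi_*E_i$, so $D=T_6+\sum_{i\in S}\pi_*E_i$ is visibly integral.

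Your argument is more concrete and connects the statement to the classical $16_6$ configuration on a Jacobian Kummer surface; it also explains the index set $S$ intrinsically rather than as a byproduct of a discriminant-form calculation. The paper's argument, while less constructive, has the advantage of simultaneously justifying \emph{why} this particular $D$ is chosen over the other three equally valid overlattice generators: it is the one whose image in $D(K)$ is $S_6$-invariant, which is exactly what is needed so that $\langle \Lambda, D\rangle$ is a Galois module for every subgroup of $S_6$ in the cohomology computations that follow. Your geometric picture recovers this too (replacing $p_6$ by another $p_j$ gives an $S_6$-conjugate trope and hence a conjugate overlattice), but you might make that remark explicit.
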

\begin{proof} We already know that the coefficient of $l$ is non-zero since $K$ is saturated in $\NS(\ol X)$, and by adding a suitable element of $2\Lambda$ to $D$, we can write $D=\frac{1}{2}l+\frac{1}{2}\sum_{i=1}^{16}a_i\pi_*E_i$, where for each $i$ we take $a_i\in \{0,\frac12,1,\frac32\}$. 

By intersecting $D$ with any of the $\pi_*E_i$, we find $a_i\in\{0,1\}$ since the intersection needs to be integral. From $D^2\in 2\Z$ we deduce $\sum_{i=1}^{16}a_i \equiv 2 \bmod 4$.
Furthermore, the projection of $D$ to $D(K)$ needs to be one of the four elements from the proof of Lemma \ref{oneNSiso}. In order to ensure that the lattice we generate is a Galois module for any subgroup of $S_6$, the element $D$ from the statement is chosen so that it projects to the unique $S_6$-invariant one. 
\end{proof}

Now that we have computed $\NS(\ol X)$, we can have \textsc{Magma} take Galois cohomology by applying the action from Lemma \ref{action_S6} and we find \begin{displaymath}
\HH^1(k,\NS(\ol X))=1.
\end{displaymath}

We can furthermore consider the case where the Galois group is not the full $S_6$. The \textsc{Magma} computations also yield the following:
\begin{proposition}\label{exceptions}
Up to conjugation there are only three subgroups $H$ of $S_6$ for which $\HH^1(H,\NS(\ol X))$ is non-trivial: one of order 4 (isomorphic to $\Z/2\Z \times \Z/2\Z$), one of order 12 (isomorphic to $A_4$) and one of order 60 (isomorphic to $A_5$). In each of these cases we find $\HH^1(H,\NS(\ol X))\cong \Z/2\Z$.
\end{proposition}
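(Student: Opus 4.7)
The plan is to establish this proposition by a direct, finite \textsc{Magma} computation, carried out in exactly the same framework as the computation of $\HH^1(S_6, \NS(\ol X))$ performed just above the statement. The lattice $\NS(\ol X)$ together with its $S_6$-action has already been made completely explicit in the preceding paragraphs: it is generated by $\Lambda = \langle l \rangle \oplus K$ together with the half-class
\[
D = \tfrac{1}{2}\bigl(\pi_*E_1 + \pi_*E_8 + \pi_*E_{12} + \pi_*E_{14} + \pi_*E_{15} + \pi_*E_{16} + l\bigr),
\]
and $S_6$ acts through the permutation representation $\rho\colon S_6 \to S_{16}$ of Lemma \ref{action_S6} on the $\pi_*E_i$ and trivially on $l$. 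Restricting this action to any subgroup $H \leq S_6$ gives $\NS(\ol X)$ the structure of an $H$-lattice whose group cohomology can be computed mechanically.

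First I would use \texttt{Subgroups(Sym(6))} to enumerate a full set of conjugacy class representatives of subgroups of $S_6$; the list is known to be finite (fewer than $60$ entries). Next, for each representative $H$, I would form the $H$-module structure on $\NS(\ol X)$ simply by restricting the $S_6$-action already constructed, feed this to \texttt{CohomologyModule}, and read off $\HH^1(H, \NS(\ol X))$ via \texttt{CohomologyGroup}. Finally I would sort the output and identify the $H$ for which the resulting group is non-trivial, verifying that they fall into precisely three conjugacy classes, of orders $4$, $12$, and $60$, whose isomorphism types are $\Z/2\Z \times \Z/2\Z$, $A_4$, and $A_5$ respectively, and that $\HH^1 \cong \Z/2\Z$ in each of those three cases.

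The one subtlety that deserves attention is that the half-class $D$ chosen above was, by Lemma \ref{oneNSiso} and the remark following it, the unique lift (among the four candidates in $D(K)$) which is fixed by the full symmetric group $S_6$. Because $D$ is $S_6$-invariant, it is automatically $H$-invariant for every subgroup $H \leq S_6$, so the $H$-action on $\NS(\ol X)$ is the canonical extension of the $H$-action on $\Lambda$ and no separate lattice-theoretic choice needs to be made subgroup by subgroup. Once this is observed, the computation is entirely routine; the main obstacle is purely organizational, namely presenting the loop over conjugacy classes and the resulting table of $\HH^1$'s in a form that can be independently verified. The structural content of the proposition, i.e.\ the identification of exactly which three subgroups yield non-trivial $\HH^1$, is then simply read off from the \textsc{Magma} output.
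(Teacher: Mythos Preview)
Your proposal is correct and matches the paper's approach exactly: the paper offers no argument beyond the sentence ``The \textsc{Magma} computations also yield the following,'' so the proposition is simply the output of looping the already-implemented cohomology computation over conjugacy classes of subgroups of $S_6$. Your write-up is in fact more detailed than the paper's, and your remark that the $S_6$-invariance of $D$ guarantees a well-defined $H$-module structure for every $H\leq S_6$ is a useful clarification the paper leaves implicit.
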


\section{An example}\label{section:example}

In this section we compute a concrete bound as stated in Theorem \ref{effFal}. Let us consider the genus $2$ curve defined over $\Q$ by:
\begin{displaymath}
C:\, y^2 = x^6+x^3+x+1.
\end{displaymath}

Let $A$ denote the Jacobian of $C$. Thanks to the algorithm provided by Elsenhans and Jahnel in \cite{EJ} we compute the N\'{e}ron--Severi rank of $A$ and we obtain that its geometric N\'{e}ron--Severi rank is $1$. By Theorem \ref{32} we know $\End(A)=\Z$.

Since $x^6+x^3+x+1=(x+1)(x^2+1)(x^3-x^2+1)$, the splitting field $F$ of $x^6+x^3+x+1$ is the composite field of $\Q(\sqrt{-1})$ and the splitting field $F_1$ of $x^3-x^2+1$. The Galois group $\Gal(F/\Q)$ has $12$ elements and two normal subgroups: $\Z/2\Z$ and $S_3$. By Proposition \ref{exceptions}, the only exceptional subgroup with $12$ elements is $A_4$. Since the only nontrivial normal subgroup of $A_4$ has $4$ elements, $\Gal(F/\Q)$ cannot be one of the exceptional subgroups of $S_6$. Therefore the algebraic Brauer group is trivial.

To compute the bound of Theorem \ref{effFal} we need to compute the Faltings height of the abelian surface $A$. By \cite{P14}*{Thm.~2.4} and noticing that we use the same bound for all places of $\Q(A[12])$ above a given rational place, we have 
\begin{displaymath}
h(A)\leq -\log(2\pi^2)+ \tfrac{1}{10}\log \left(2^{-12}\Disc_{6}\left(4(x^6+x^3+x+1)\right)\right)-\log\left(2^{-1/5}|J_{10}|^{1/10}\det(\Im \tau)^{1/2}\right),
\end{displaymath}
with $2^{-12}\Disc_{6}\left(4(x^6+x^3+x+1)\right)=2^{12}\cdot25\cdot23$, $|J_{10}|=0.001921635$ and 
\begin{displaymath}
\tau=
\begin{pmatrix}
-1.49097+ 1.64505i&-0.50000+ 0.98058i\\
-0.50000 + 0.98058i&
    -1.50903 + 1.64505i
    \end{pmatrix}.
\end{displaymath}
Hence $h(A)\leq -0.79581.$    
In our situation we have $k=\Q$ and $h(A)\leq -0.79581$, so we can bound $M$ by plugging these into
\begin{displaymath} 
M\leq 2^{4664}c_1^{16}c_2(k)^{256}\left(2h(A)+\tfrac{8}{17}\log[k:\Q]+8\log c_1+128 \log c_2(k) +1503\right)^{512}
\end{displaymath}
with $c_1=4^{11}\cdot 9^{12}$ and $c_2(k)=7.5\cdot 10^{47}[k:\Q]$.

Using \textsc{Magma} we get 
\begin{displaymath}
M \leq C=8.7\times 10^{16100}.
\end{displaymath}

Let $X = \Kum(A)$.
Following the proofs in Subsection~\ref{subsec: general case}, we have
\begin{displaymath}
\Br(\overline{X})^\Gamma <4\prod_{\ell<C} C^{50}<4C^{50C}<4\cdot 10^{7.5\cdot 10^{16106}}. 
\end{displaymath}
Hence we conclude that
\begin{displaymath}
|\Br(X)/\Br_0(X)| < 4\cdot 10^{7.5\cdot 10^{16106}}.
\end{displaymath}


\begin{bibdiv}
\begin{biblist}

\bib{AVA16}{article}{,
AUTHOR = {Abramovich, D.},
AUTHOR = {V\'arilly-Alvarado, A.},
TITLE = {Level structures on abelian varieties, {K}odaira dimension, and {L}ang's conjecture},
YEAR = {2016},
Note = {submitted},
}	

\bib{albert}{article}{,
AUTHOR = {Albert, A. A.},
TITLE = {Integral domains of rational generalized quaternion algebras},
JOURNAL = {Bull. Amer. Math. Soc.},
VOLUME = {40},
YEAR = {1934},
NUMBER = {2},
PAGES = {164--176},
}	

\bib{autissier}{article}{,
AUTHOR = {Autissier, P.},
TITLE = {Un lemme matriciel effectif},
JOURNAL = {Math.Z.},
VOLUME = {273},
YEAR = {2013},
PAGES = {355--361},
}	

\bib{BS75}{article}{
AUTHOR = {Birch, B. J.},
author={Swinnerton-Dyer, H. P. F.},
TITLE = {The {H}asse problem for rational surfaces},
JOURNAL = {J. Reine Angew. Math.},
VOLUME = {274/275},
YEAR = {1975},
PAGES = {164--174},
}

\bib{B96}{article}{
author={Bost, J.-B.},
title={P\'eriodes et isogenies des vari\'et\'es ab\'eliennes sur les
corps de nombres (d'apr\`es D. Masser et G. W\"ustholz)},
journal={Ast\'erisque},
number={237},
date={1996},
pages={S\'eminaire Bourbaki,\ No.\ 795, 1994--1995, 115--161},
}

\bib{Bri06}{article}{,
    AUTHOR = {Bright, M.},
     TITLE = {Brauer groups of diagonal quartic surfaces},
   JOURNAL = {J. Symbolic Comput.},
    VOLUME = {41},
      YEAR = {2006},
    NUMBER = {5},
     PAGES = {544--558},
}

\bib{BBFL07}{article}{,
    AUTHOR = {Bright, M. J.},
    AUTHOR = {Bruin, N.},
    AUTHOR = {Flynn, E. V.},
    AUTHOR = {Logan, A.},
     TITLE = {The {B}rauer-{M}anin obstruction and {S}h[2]},
   JOURNAL = {LMS J. Comput. Math.},
    VOLUME = {10},
      YEAR = {2007},
     PAGES = {354--377},
}

\bib{Can16}{article}{,
author={Cantoral-Farf\'an, V.},
title = {A survey around the Hodge, Tate and Mumford-Tate conjectures for abelian varieties},
note={arXiv:1602.08354},
year = {2016},
}

\bib{FC}{article}{
   author={Charles, F.},
   title={On the {P}icard number of {K}3 surfaces over number fields},
   journal={Algebra Number Theory},
   number={1},
   year={2014},
   pages={1--17},
}

\bib{Clark}{book}{
   author={Clark, P. L.},
   title={Rational points on Atkin-Lehner quotients of Shimura curves},
   note={Thesis (Ph.D.)--Harvard University},
   publisher={ProQuest LLC, Ann Arbor, MI},
   date={2003},
   pages={184},
}

\bib{CTCJ80}{article}{,
    AUTHOR = {Colliot-Th{\'e}l{\`e}ne, J.-L.},
    author ={Coray, D.},
    author ={Sansuc, J.-J.},
     TITLE = {Descente et principe de {H}asse pour certaines vari\'et\'es rationnelles},
   JOURNAL = {J. Reine Angew. Math.},
    VOLUME = {320},
      YEAR = {1980},
     PAGES = {150--191},
}

\bib{CTKS87}{incollection}{,
    AUTHOR = {Colliot-Th{\'e}l{\`e}ne, J.-L.}, 
    AUTHOR = {Kanevsky, D.},
    AUTHOR = {Sansuc, J.-J.},
     TITLE = {Arithm\'etique des surfaces cubiques diagonales},
 BOOKTITLE = {Diophantine approximation and transcendence theory ({B}onn,
              1985)},
    SERIES = {Lecture Notes in Math.},
    VOLUME = {1290},
     PAGES = {1--108},
 PUBLISHER = {Springer, Berlin},
      YEAR = {1987},
}

\bib{CTSSD87}{article}{,
    AUTHOR = {Colliot-Th{\'e}l{\`e}ne, J.-L.}, 
    AUTHOR ={Sansuc, J.-J.},
    AUTHOR ={Swinnerton-Dyer, P.},
     TITLE = {Intersections of two quadrics and {C}h\^atelet surfaces. {I}},
   JOURNAL = {J. Reine Angew. Math.},
    VOLUME = {373},
      YEAR = {1987},
     PAGES = {37--107},
}

\bib{CS13}{article}{
    AUTHOR = {Colliot-Th{\'e}l{\`e}ne, J.-L.},
    author = {Skorobogatov, A. N.},
     TITLE = {Good reduction of the {B}rauer-{M}anin obstruction},
   JOURNAL = {Trans. Amer. Math. Soc.},
    VOLUME = {365},
      YEAR = {2013},
    NUMBER = {2},
     PAGES = {579--590},
}

\bib{DR04}{article}{,
    AUTHOR = {Dieulefait, L. V.},
    AUTHOR = {Rotger, V.},
     TITLE = {The arithmetic of {QM}-abelian surfaces through their {G}alois
              representations},
   JOURNAL = {J. Algebra},
    VOLUME = {281},
      YEAR = {2004},
    NUMBER = {1},
     PAGES = {124--143},
}

\bib{EJ10}{article}{
    AUTHOR = {Elsenhans, A.-S.},
    AUTHOR =  {Jahnel, J.},
     TITLE = {On the {B}rauer-{M}anin obstruction for cubic surfaces},
   JOURNAL = {J. Comb. Number Theory},
    VOLUME = {2},
      YEAR = {2010},
    NUMBER = {2},
     PAGES = {107--128},
}

\bib{EJ}{article}{
   author={Elsenhans, A.-S.},
   author={Jahnel, J.},
   title={Kummer surfaces and the computation of the {P}icard group},
   journal={LMS J. Comput. Math},
   volume={15},
   year={2012},
   pages={84--100},
}

\bib{EJ12}{article}{,
    AUTHOR = {Elsenhans, A.-S.},
    AUTHOR = {Jahnel, J.},
     TITLE = {On the order three {B}rauer classes for cubic surfaces},
   JOURNAL = {Central European Journal of Mathematics},
    VOLUME = {10},
      YEAR = {2012},
    NUMBER = {3},
     PAGES = {903--926},
}

\bib{F}{article}{
   author={Faltings, G.},
   title={Finiteness theorems for abelian varieties over number fields},
   note={Translated from the German original [Invent.\ Math.\ {\bf 73}
   (1983), no.\ 3, 349--366; 
   ] by Edward Shipz},
   conference={
      title={Arithmetic geometry},
      address={Storrs, Conn.},
      date={1984},
   },
   book={
      publisher={Springer, New York},
   },
   date={1986},
   pages={9--27},
}

\bib{FS97}{article}{,
    AUTHOR = {Flynn, E. V.},
    AUTHOR =  {Smart, N. P.},
     TITLE = {Canonical heights on the {J}acobians of curves of genus {$2$}
              and the infinite descent},
   JOURNAL = {Acta Arith.},
    VOLUME = {79},
      YEAR = {1997},
    NUMBER = {4},
     PAGES = {333--352},
}

\bib{GR14a}{article}{
   author={Gaudron, {\'E}.},
   author={R{\'e}mond, G.},
   title={Th\'eor\`eme des p\'eriodes et degr\'es minimaux d'isog\'enies},
   journal={Comment. Math. Helv.},
   volume={89},
   date={2014},
   number={2},
   pages={343--403},
}

\bib{GR14}{article}{
   author={Gaudron, {\'E}.},
   author={R{\'e}mond, G.},
   title={Polarisations et isog\'enies},
   journal={Duke Math. J.},
   volume={163},
   date={2014},
   number={11},
   pages={2057--2108},
}

\bib{HS15}{article}{
author= {Harpaz, Y.},
author= {Skorovogatov, A. N.},
title= {Hasse principle for Kummer varieties},
year = {2015},
note={arXiv:1504.02343}
}

\bib{HKT13}{article}{
   author={Hassett, B.},
   author={Kresch, A.},
   author={Tschinkel, Y.},
   title={Effective computation of {P}icard groups and {B}rauer - {M}anin obstructions of degree two {$K3$} surfaces over number fields},
   journal={Rend. Circ. Mat. Palermo (2)},
   volume={62},
   date={2013},
   number={1},
   pages={137--151},
}

\bib{HVA13}{article}{,
    AUTHOR = {Hassett, B.}, 
    AUTHOR = {V{\'a}rilly-Alvarado, A.},
     TITLE = {Failure of the {H}asse principle on general {$K3$} surfaces},
   JOURNAL = {J. Inst. Math. Jussieu},
    VOLUME = {12},
      YEAR = {2013},
    NUMBER = {4},
     PAGES = {853--877},
}

\bib{HVAV11}{article}{,
    AUTHOR = {Hassett, B.},
    AUTHOR =  {V{\'a}rilly-Alvarado, A.}, 
    AUTHOR = {Varilly, P.},
     TITLE = {Transcendental obstructions to weak approximation on general
              {K}3 surfaces},
   JOURNAL = {Adv. Math.},
    VOLUME = {228},
      YEAR = {2011},
    NUMBER = {3},
     PAGES = {1377--1404},
}

\bib{IS15}{article}{,
    AUTHOR = {Ieronymou, E.},
    AUTHOR = {Skorobogatov, A. N.},
     TITLE = {Odd order {B}rauer-{M}anin obstruction on diagonal quartic
              surfaces},
   JOURNAL = {Adv. Math.},
    VOLUME = {270},
      YEAR = {2015},
     PAGES = {181--205},
}

\bib{ISZ}{article}{
   author={Ieronymou, E.},
   author={Skorobogatov, A. N.},
   author={Zarhin, Y. G.},
   title={On the Brauer group of diagonal quartic surfaces},
   journal={J. Lond. Math. Soc. (2)},
   volume={83},
   date={2011},
   number={3},
   pages={659--672},
}

\bib{Kani}{article}{
author={Kani, E.},
title={Jacobians isomorphic to a product of two elliptic curves},
note={to appear in J. Number Theory}
}

\bib{K99}{article}{
   author={Kausz, I.},
   title={A discriminant and an upper bound for $\omega^2$ for
   hyperelliptic arithmetic surfaces},
   journal={Compos. Math.},
   volume={115},
   date={1999},
   number={1},
   pages={37--69},
}

\bib{KT04}{article}{,
    AUTHOR = {Kresch, A.},
    AUTHOR={Tschinkel, Y.},
     TITLE = {On the arithmetic of del {P}ezzo surfaces of degree 2},
   JOURNAL = {Proc. Lond. Math. Soc. (3)},
    VOLUME = {89},
      YEAR = {2004},
    NUMBER = {3},
     PAGES = {545--569},
}

\bib{KT08}{article}{,
    AUTHOR = {Kresch, A.},
    AUTHOR = {Tschinkel, Y.},
     TITLE = {Effectivity of {B}rauer-{M}anin obstructions},
   JOURNAL = {Adv. Math.},
    VOLUME = {218},
      YEAR = {2008},
    NUMBER = {1},
     PAGES = {1--27},
}

\bib{KT11}{article}{,
    AUTHOR = {Kresch, A.},
    AUTHOR =  {Tschinkel, Y.},
     TITLE = {Effectivity of {B}rauer-{M}anin obstructions on surfaces},
   JOURNAL = {Adv. Math.},
    VOLUME = {226},
      YEAR = {2011},
    NUMBER = {5},
     PAGES = {4131--4144},
}

\bib{LLR05}{article}{
    AUTHOR = {Liu, Q.},
    author={Lorenzini, D.} 
    author={Raynaud, M.},
     TITLE = {On the {B}rauer group of a surface},
   JOURNAL = {Invent. Math.},
    VOLUME = {159},
      YEAR = {2005},
    NUMBER = {3},
     PAGES = {673--676},
}

\bib{Log08}{article}{,
    AUTHOR = {Logan, A.},
     TITLE = {The {B}rauer-{M}anin obstruction on del {P}ezzo surfaces of
              degree 2 branched along a plane section of a {K}ummer surface},
   JOURNAL = {Math. Proc. Cambridge Philos. Soc.},
    VOLUME = {144},
      YEAR = {2008},
    NUMBER = {3},
     PAGES = {603--622},
}

\bib{LvL09}{article}{,
    AUTHOR = {Logan, A.},
    AUTHOR =  {van Luijk, R.},
     TITLE = {Nontrivial elements of {S}ha explained through {$K3$}
              surfaces},
   JOURNAL = {Math. Comp.},
    VOLUME = {78},
      YEAR = {2009},
    NUMBER = {265},
     PAGES = {441--483},
}

\bib{LP80}{article}{
   author={Looijenga, E.},
   author={Peters, C.},
   title={Torelli theorems for {K}\"ahler {$K3$} surfaces},
   journal={Compos. Math.},
   volume={42},
   date={1980/81},
   number={2},
   pages={145--186},
}

\bib{Man71}{incollection}{,
    AUTHOR = {Manin, Y. I.},
     TITLE = {Le groupe de {B}rauer-{G}rothendieck en g\'eom\'etrie
              diophantienne},
 BOOKTITLE = {Actes du {C}ongr\`es {I}nternational des {M}ath\'ematiciens
              ({N}ice, 1970), {T}ome 1},
     PAGES = {401--411},
 PUBLISHER = {Gauthier-Villars, Paris},
      YEAR = {1971},
}

\bib{Man74}{book}{,
    AUTHOR = {Manin, Y. I.},
     TITLE = {Cubic forms: algebra, geometry, arithmetic},
      NOTE = {Translated from the Russian by M. Hazewinkel,
              North-Holland Mathematical Library, Vol. 4},
 PUBLISHER = {North-Holland Publishing Co., Amsterdam-London; American
              Elsevier Publishing Co., New York},
      YEAR = {1974},
     PAGES = {vii+292},
}

\bib{MW95}{article}{
   author={Masser, D. W.},
   author={W{\"u}stholz, G.},
   title={Refinements of the Tate conjecture for abelian varieties},
   conference={
      title={Abelian varieties},
      address={Egloffstein},
      date={1993},
   },
   book={
      publisher={de Gruyter, Berlin},
   },
   date={1995},
   pages={211--223},
}

\bib{MW95b}{article}{
   author={Masser, D. W.},
   author={W{\"u}stholz, G.},
   title={Factorization estimates for abelian varieties},
   journal={Publ. Math. Inst. Hautes \'Etudes Sci.},
   number={81},
   date={1995},
   pages={5--24},
}

\bib{MSTVA14}{article}{
   author={McKinnie, K.},
   author={Sawon, J.},
    author={Tanimoto, S.},
    author= {V\'arilly-Alvarado, A.}
   title={Brauer groups on {K}3 surfaces and arithmetic applications},
 note = {to appear in the proceedings of AIM workshop ``Brauer groups and obstruction problems: moduli spaces and arithmetic''}
 year = {2014}
}

\bib{Mil75}{article}{
    AUTHOR = {Milne, J. S.},
     TITLE = {On a conjecture of {A}rtin and {T}ate},
   JOURNAL = {Ann. of Math. (2)},
    VOLUME = {102},
      YEAR = {1975},
    NUMBER = {3},
     PAGES = {517--533},
}

\bib{MumAV}{book}{
   author={Mumford, D.},
   title={Abelian varieties},
   series={Tata Institute of Fundamental Research Studies in Mathematics, No. 5 },
   publisher={Published for the Tata Institute of Fundamental Research, Bombay; Oxford University Press, London},
      year={1970},
}

\bib{NeukirchCFT}{book}{
author = {Neukirch, J.},
title = {Class Field Theory},
year = {1986},
series = {Grundlehren der mathematischen Wissenschaften},
volume = {280},
publisher = {Springer-Verlag Berlin-Heidelberg},
}

\bib{New}{article}{
    AUTHOR = {Newton, R.},
     TITLE = {Transcendental {B}rauer groups of products of {CM} elliptic
              curves},
   JOURNAL = {J. Lond. Math. Soc. (2)},
    VOLUME = {93},
      YEAR = {2016},
    NUMBER = {2},
     PAGES = {397--419},
}

\bib{Nik}{article}{
   author={Nikulin, V. V.},
   title={Integer symmetric bilinear forms and some of their geometric applications},
   journal={Math. USSR Izv.},
   volume={14},
   date={1980},
   pages={103--167},
}

\bib{Ogus}{article}{
  author={Ogus, A.},
  title={Hodge cycles and crystalline cohomology},
  date={1982},
  book={
     author={Deligne, P.},
     author={Milne, J. S.},
     author={Ogus, A.},
     author={Shih, K.-Y.},
     title={Hodge cycles, motives, and Shimura varieties},
     series={Lecture Notes in Math.},
     volume={900},
     publisher={Springer-Verlag, Berlin-New York},
  },
}

\bib{P12}{article}{
   author={Pazuki, F.},
   title={Theta height and Faltings height},
   journal={Bull. Soc. Math. France},
   volume={140},
   date={2012},
   number={1},
   pages={19--49},
}

\bib{P13}{article}{
   author={Pazuki, F.},
   title={Minoration de la hauteur de N\'eron-Tate sur les surfaces
   ab\'eliennes},
   journal={Manuscripta Math.},
   volume={142},
   date={2013},
   number={1-2},
   pages={61--99},
}

\bib{P14}{article}{
  author={Pazuki, F.},
  title={D\'ecompositions en hauteurs locales},
  note={arXiv:1205.4525v3},
  date={2014},
  }

\bib{PTV}{article}{
   author={Poonen, B.},
   author={Testa, D.},
    author={van Luijk, R.},
   title={Computing {N}\'eron-{S}everi groups and cycle class groups},
   journal={Compos. Math.},
   volume={151},
   year={2015},
   number={4},
   pages={713--734},
}

\bib{Sil}{article}{
   author={Silverberg, A.},
   title={Fields of definition for homomorphisms of abelian varieties},
   journal={J. Pure Appl. Algebra},
   volume={77},
   date={1992},
   number={3},
   pages={253--262},
}

\bib{SZ08}{article}{
   author={Skorobogatov, A. N.},
   author={Zarhin, Yuri G.},
   title={A finiteness theorem for the {B}rauer groups of abelian varieties and {$K3$} surfaces},
   journal={J. Algebraic Geom.},
   volume={17},
   date={2008},
   number={3},
   pages={481--502},
}

\bib{SZ12}{article}{
   author={Skorobogatov, Alexei. N.},
   author={Zarhin, Yuri G.},
   title={The Brauer group of Kummer surfaces and torsion of elliptic
   curves},
   journal={J. Reine Angew. Math.},
   volume={666},
   date={2012},
   pages={115--140},
}

\bib{SZ14}{article}{,
    AUTHOR = {Skorobogatov, A. N.},
    AUTHOR =  {Zarhin, Y. G.},
     TITLE = {The {B}rauer group and the {B}rauer-{M}anin set of products of
              varieties},
   JOURNAL = {J. Eur. Math. Soc. (JEMS)},
    VOLUME = {16},
      YEAR = {2014},
    NUMBER = {4},
     PAGES = {749--768},
}

\bib{SD93}{article}{,
    AUTHOR = {Swinnerton-Dyer, P.},
     TITLE = {The {B}rauer group of cubic surfaces},
   JOURNAL = {Math. Proc. Cambridge Philos. Soc.},
    VOLUME = {113},
      YEAR = {1993},
    NUMBER = {3},
     PAGES = {449--460},
}

\bib{SD99}{article}{,
    AUTHOR = {Swinnerton-Dyer, P.},
     TITLE = {Brauer-{M}anin obstructions on some {D}el {P}ezzo surfaces},
   JOURNAL = {Math. Proc. Cambridge Philos. Soc.},
    VOLUME = {125},
      YEAR = {1999},
    NUMBER = {2},
     PAGES = {193--198},
}

\bib{TVA15}{article}{
    AUTHOR = {Tanimoto, S.},
    AUTHOR = {V\'arilly-Alvarado, A.},
     TITLE = {Kodaira dimension of moduli of special cubic fourfolds},
   YEAR = {2015},
   Note={submitted}
}

\bib{Tate66}{article}{
    AUTHOR = {Tate, J.},
     TITLE = {Endomorphisms of abelian varieties over finite fields},
   JOURNAL = {Invent. Math.},
    VOLUME = {2},
      YEAR = {1966},
     PAGES = {134--144},
}

\bib{Tate64-66}{article}{
author = {Tate, J.},
journal = {S\'{e}minaire N. Bourbaki},
pages = {415-440},
publisher = {Soci\'{e}t\'{e} Math\'{e}matique de France},
title = {On the conjectures of Birch and Swinnerton--Dyer and a geometric analog},
volume = {9},
year = {1964-1966},
}

\bib{VA08}{article}{,
    AUTHOR = {V{\'a}rilly-Alvarado, A.},
     TITLE = {Weak approximation on del {P}ezzo surfaces of degree 1},
   JOURNAL = {Adv. Math.},
    VOLUME = {219},
      YEAR = {2008},
    NUMBER = {6},
     PAGES = {2123--2145},
}

\bib{VA16}{article}{
    AUTHOR = {V{\'a}rilly-Alvarado, A.},
     TITLE = {Arithmetic of {K}3 surfaces},
   YEAR = {2016},
   Note={to appear in the proceedings of Simons symposium 2015 ``Geometry over non-closed fields''}
}

\bib{VAV16}{article}{
    AUTHOR = {V{\'a}rilly-Alvarado, A.},
    AUTHOR = {Viray, B.}
     TITLE = {Abelian $n$-division fields of elliptic curves and {B}rauer groups of product {K}ummer \& abelian surfaces},
   YEAR = {2016},
   Note={arXiv:1606.09240}
}

\bib{OW16}{article}{
    AUTHOR = {Wittenberg, O.},
     TITLE = {Rational points and zero-cycles on rationally connected varieties over number fields},
   YEAR = {2016},
   Note={arXiv:1604.08543}
}


\end{biblist}
\end{bibdiv}

\end{document}